\tikzset{middlearrow/.style={
        decoration={markings,
            mark= at position 0.5 with {\arrow{#1}} ,
        },
        postaction={decorate}
    }
}
\tikzset{
  on each segment/.style={
    decorate,
    decoration={
      show path construction,
      moveto code={},
      lineto code={
        \path [#1]
        (\tikzinputsegmentfirst) -- (\tikzinputsegmentlast);
      },
      curveto code={
        \path [#1] (\tikzinputsegmentfirst)
        .. controls
        (\tikzinputsegmentsupporta) and (\tikzinputsegmentsupportb)
        ..
        (\tikzinputsegmentlast);
      },
      closepath code={
        \path [#1]
        (\tikzinputsegmentfirst) -- (\tikzinputsegmentlast);
      },
    },
  },
  mid arrow/.style={postaction={decorate,decoration={
        markings,
        mark=at position .5 with {\arrow[#1]{stealth}}
      }}},
}
\numberwithin{equation}{section}
\theoremstyle{plain}
\newtheorem{thm}{\protect\theoremname}[section]
  \theoremstyle{remark}
  \newtheorem{rem}[thm]{\protect\remarkname}
  \theoremstyle{definition}
  \newtheorem{defn}[thm]{\protect\definitionname}
  \theoremstyle{plain}
  \newtheorem{lem}[thm]{\protect\lemmaname}
  \theoremstyle{plain}
  \newtheorem{prop}[thm]{\protect\propositionname}
  \theoremstyle{remark}
  \newtheorem*{rem*}{\protect\remarkname}
\let\myTOC\tableofcontents
\renewcommand\tableofcontents{%
  \frontmatter
  \pdfbookmark[1]{\contentsname}{}
  \myTOC
  \mainmatter }
\date{ }
\def\definitionname{Definition}
\def\theoremname{Theorem}
\def\propositionname{Proposition}
\def\lemmaname{Lemma}
\def\corollaryname{Corollary}
\def\remarkname{Remark}
  \providecommand{\definitionname}{Definition}
  \providecommand{\lemmaname}{Lemma}
  \providecommand{\propositionname}{Proposition}
  \providecommand{\remarkname}{Remark}
\providecommand{\theoremname}{Theorem}
\newcommand{\N}{\mathbb N}
\newcommand{\R}{\mathbb R}
\newcommand{\cA}{\mathcal A}
\newcommand{\cB}{\mathscr{B}}
\newcommand{\cL}{\mathcal L}
\newcommand{\cV}{\mathcal V}
\newcommand{\ccV}{\mathscr V}
\def\ds{\displaystyle}
\begin{document}
\date{\today}

\title{Finite Horizon Mean Field Games on Networks}

\author{Yves Achdou \thanks { Univ. Paris Diderot, Sorbonne Paris Cit{\'e}, Laboratoire Jacques-Louis Lions, UMR 7598, UPMC, CNRS, F-75205 Paris, France.
 achdou@ljll.univ-paris-diderot.fr},
Manh-Khang Dao \thanks {Department of Mathematics, KTH Royal Institute of Technology, Sweden},
Olivier Ley \thanks {Univ Rennes, INSA Rennes, CNRS, IRMAR - UMR 6625, F-35000 Rennes, France. olivier.ley@insa-rennes.fr},
Nicoletta Tchou \thanks {Univ Rennes, CNRS, IRMAR - UMR 6625, F-35000 Rennes, France. nicoletta.tchou@univ-rennes1.fr}
}
\maketitle

\begin{abstract} We consider finite horizon stochastic mean field games in  which the state space is a network.
They are described by a system coupling  a backward in time Hamilton-Jacobi-Bellman equation and a forward in time 
Fokker-Planck equation.
The  value function $u$ is continuous and satisfies general Kirchhoff conditions at the vertices. 
The density $m$ of the distribution of states satisfies dual transmission conditions:
 in particular, $m$ is generally discontinuous across the vertices, and the values of $m$ on each side of the vertices satisfy special compatibility conditions. 
The stress is put on the case when the Hamiltonian is Lipschitz continuous. Existence and uniqueness are proven.
\end{abstract}
\section{\label{sec: Mean-field-games}Introduction and main results}

This work is the continuation of \cite{ADLT} which was devoted to mean field games on networks in the case of an infinite time horizon. The topic of  mean field games (MFGs for short) is more and more investigated since the pioneering works \cite{LL2006-A,MR2271747,LL2007} of  Lasry and Lions: it aims at studying the  asymptotic behavior of  stochastic differential games (Nash equilibria) as the number $N$ of agents tends to infinity. We refer to  \cite{ADLT} for a more extended discussion on MFGs and for additional references on the analysis of the system of PDEs that stem from the model when there is no common noise.

A network (or a graph) is a set of items, referred to as vertices (or nodes or crosspoints), with connections between them referred to as edges. In the recent years, there has been an increasing interest  in the investigation of dynamical systems and differential equations on networks, in particular in connection with problems of data transmission and traffic management. The literature  on optimal control in which the state variable takes its values on a network is recent:  deterministic control problems and related Hamilton-Jacobi equations were studied in   \cite{MR3057137,MR3358634,MR3023064,MR3621434,MR3556345,MR3729588}. Stochastic processes on networks and related Kirchhoff conditions at the vertices  were studied in \cite{FS2000,FW1993}.

 The present work is devoted to  finite horizon stochastic mean field games (MFGs) taking place on networks. 
The most important difficulty will be to deal with the transition conditions at the vertices. The latter  are obtained 
 from the theory of  stochastic control in \cite{FW1993,FS2000}, see Section \ref{subsec: A derivation of the MFG system} below.
In \cite{CM2016}, the first article on MFGs on networks, Camilli and Marchi consider a particular type of Kirchhoff condition  at the vertices for the value function: this condition comes from an assumption which can be informally stated as follows: consider a vertex $\nu$ of the network and assume that it is the intersection of $p$ edges $\Gamma_{1},\dots, \Gamma_{p} $, ;
 if, at time $\tau$, the controlled stochastic process $X_t$ associated to a given agent hits $\nu$, 
then the probability that $X_{\tau^+}$ belongs to $\Gamma_i$ is proportional to the diffusion coefficient in $\Gamma_i$. Under this assumption, it can be seen that the density of the distribution of states is continuous at the vertices of the network.  
In the present work, the above mentioned  assumption is not made any longer.  Therefore, it will be seen below that the value function  satisfies more general Kirchhoff conditions, and accordingly, that the density of the distribution of states is no longer continuous at the vertices; the continuity condition is then replaced by suitable
 compatibility conditions on the jumps across the vertices. A complete study of the system of differential equations arising in 
infinite horizon mean field games on networks 
with at most quadratic Hamiltonians and very general coupling costs has been supplied in a previous work, see \cite{ADLT}.

In the present work, we focus on a more basic case, namely  finite horizon MFG with globally Lipschitz Hamiltonian 
with  rather strong assumptions on the coupling cost. This will allow us to concentrate on the difficulties induced by the Kirchhoff conditions.  Therefore, this work should be seen as a first and necessary step 
in order to deal with more difficult situations, for example with quadratic or subquadratic Hamiltonians. We believe that 
treating such cases will be possible by combining the results contained in the present work with methods that can be found in \cite{Parma,Pumi}, see also \cite{BMP,MR0241822,MR1465184} for references on Hamilton-Jacobi equations.

After obtaining the  transmission conditions at the vertices for both the value function and the density,
we shall prove existence and  uniqueness of weak solutions  of the uncoupled Hamilton-Jacobi-Bellman (HJB) and Fokker-Planck (FP) equations (in suitable space-time Sobolev spaces), and regularity results.

The present work is organized as follows: the remainder of Section \ref{sec: Mean-field-games} is devoted to
setting the problem and obtaining the system of partial differential equations and the transmission conditions at the vertices.
 Section~\ref{subsec: The-linear-parabolic} contains useful results on a modified heat equation in the network with general Kirchhoff conditions. Section~\ref{sec:fokk-planck-equat} is devoted to  the Fokker-Planck equation.
Weak solutions are defined by using  a special pair of Sobolev spaces of functions defined on the network referred to as $V$ and $W$ below. Section \ref{sec:hamilt-jacobi-equat} is devoted to 
the HJB equation supplemented with the Kirchhoff conditions: it addresses the main difficulty of the work, consisting of  obtaining regularity results for the weak solution 
(note that, to the best of our knowledge, such results for networks and general Kirchhoff conditions do not exist in the literature).
 Finally,  the proofs of the main results of existence and uniqueness for the MFG system of partial differential equations are completed in Section~\ref{sec:exist-uniq-regul}.

\subsection{Networks and function spaces}

\subsubsection{The geometry}\label{sec:geometry}

A bounded network $\Gamma$ (or a bounded connected graph) is a connected subset of $\R ^n$ made of a finite number of  bounded non-intersecting straight segments, referred to as edges, which connect  
nodes referred to as vertices. The finite collection of vertices and the finite set of closed edges are respectively denoted by $\mathcal{V}:=\left\{ \nu_{i}, i\in I\right\}$
and  $\mathcal{E}:=\left\{ \Gamma_{\alpha}, \alpha\in\mathcal{A}\right\}$, 
where $I$ and $A$ are finite sets of indices contained in $\N$.  We assume that for $\alpha,\beta \in \cA$,  if $\alpha\not=\beta$, 
 then $\Gamma_\alpha\cap \Gamma_\beta$ is either empty or made of  a single vertex.
 The length of $\Gamma_{\alpha}$ is denoted by $\ell_{\alpha}$.
Given $\nu_{i}\in\mathcal{V}$, the set of indices of edges that are
adjacent to the vertex $\nu_{i}$ is denoted by
 $\mathcal{A}_{i}=\left\{ \alpha\in\mathcal{A}:\nu_{i}\in\Gamma_{\alpha}\right\} $.
A vertex $\nu_{i}$ is named a {\sl boundary vertex} if $\sharp\left(\mathcal{A}_{i}\right)=1$,
otherwise it is named a {\sl transition vertex}. The set containing all
the boundary vertices is named the {\sl boundary} of the network and is 
denoted by $\partial\Gamma$ hereafter.

The edges $\Gamma_{\alpha}\in\mathcal{E}$  are oriented in an arbitrary manner. In most of what follows, we shall make the 
following arbitrary choice that an edge $\Gamma_{\alpha}\in\mathcal{E}$ connecting two vertices $\nu_i$ and $\nu_j$, with $i<j$ is oriented from $\nu_i$ toward $\nu_j$:
this induces a natural parametrization $\pi_\alpha: [0,\ell_\alpha]\to \Gamma_\alpha=[\nu_i,\nu_j]$:
\begin{equation}
\pi_\alpha(y)=(\ell_\alpha-y)\nu_i+y\nu_j\quad\text{for } y\in  [0,\ell_\alpha]. \label{eq: parametrization}
\end{equation}
For a function 
$v:\Gamma\rightarrow\mathbb{R}$ and $\alpha\in\mathcal{A}$,
we define $v_\alpha: (0,\ell_\alpha)\rightarrow\mathbb{R}$ by
\begin{displaymath}
v_{\alpha} (y):=v\circ\pi_{\alpha} (y),\quad \hbox{ for all }y\in (0,\ell_\alpha).  
\end{displaymath}
The function $v_\alpha$ is a priori defined only in $(0,\ell_\alpha)$.
When it is possible, we extend it by continuity at the boundary by setting
\begin{eqnarray*}
\displaystyle v_{\alpha}\left(0\right):=\lim_{y\rightarrow0^{+}}v_{\alpha}\left(y\right)
\text{ and }
v_{\alpha}\left(\ell_{\alpha}\right):=\lim_{y\rightarrow\ell_{\alpha}^{-}}v_{\alpha}\left(y\right).
\end{eqnarray*}
In that latter case, we can define
\begin{equation}
v|_{\Gamma_{\alpha}}\left(x\right)=\begin{cases}
v_{\alpha}\left(\pi_{\alpha}^{-1}\left(x\right)\right), & \text{if }x\in\Gamma_{\alpha}\backslash\mathcal{V},\\
\displaystyle v_{\alpha}\left(0\right)=\lim_{y\rightarrow0^{+}}v_{\alpha}\left(y\right), & \text{if }x=\nu_{i},\\
\displaystyle v_{\alpha}\left(\ell_{\alpha}\right)=\lim_{y\rightarrow\ell_{\alpha}^{-}}v_{\alpha}\left(y\right), & \text{if }x=\nu_{j}.
\end{cases}\label{eq: v at the vertices}
\end{equation}
Notice that $v|_{\Gamma_{\alpha}}$ does not coincide with the original function $v$ at the vertices in general
when $v$ is not continuous.

\begin{rem}\label{rem:new network}
In what precedes, the edges have been arbitrarily  oriented from the vertex with the smaller index toward the vertex with the larger one.
 Other choices are of course possible.
 In particular, by possibly dividing a single edge into two, adding thereby new artificial vertices, it is always possible to assume that for all vertices $\nu_i\in\mathcal{V}$,
\begin{equation}
  \text{either }\pi_{\alpha}(0)=\nu_i,\text{ for all }\alpha\in\mathcal{A}_i
  \text{ or }\pi_{\alpha}(\ell_\alpha)=\nu_i,\text{ for all }\alpha\in\mathcal{A}_i.\label{oriented networks}
\end{equation} 
This idea was used by  Von Below in \cite{Below1988}: some edges of $\Gamma$ are cut into two by adding artificial vertices
so that the new oriented network $\overline{\Gamma}$ has the property \eqref{oriented networks}, see Figure \ref{fig: Network intro} for an example.

\begin{figure}[H]
  \begin{center}
    \begin{tikzpicture}[scale=0.5, trans/.style={thick,<->,shorten >=2pt,shorten <=2pt,>=stealth} ]
      \draw (0.8,5.5)node[left] {$\Gamma_1$};
      \draw[middlearrow={triangle 90}] (-2,5) node [left]{$\nu_1$} -- (5,5)  node [right]{$\nu_2$};
      \draw (-2,2.5)node[left] {$\Gamma_2$};
      \draw[middlearrow={triangle 90}] (-2,5) -- (-2,0) node [left]{$\nu_3$};
      \draw (0.8,3.3)node[right] {$\Gamma_3$};
      \draw[middlearrow={triangle 90}] (-2,5) -- (5,0) node [right]{$\nu_4$};
      \draw (0.8,-0.5)node[left] {$\Gamma_4$};
      \draw[middlearrow={triangle 90}] (-2,0) -- (5,0);

      \draw (12.8,5.5)node[left] {$\tilde\Gamma_1$};
      \draw[middlearrow={triangle 90}] (10,5) node [left]{$\tilde \nu_1$} -- (17,5) node [right]{$\tilde\nu_2$};
      \draw (10,3.75)node[left] {$\tilde \Gamma_2$};
      \draw[-{triangle 90}] (10,5) -- (10,2.5) node [left]{$\tilde \nu_5$};
       \draw (10,1.25)node[left] {$\tilde \Gamma_5$};
      \draw[-{triangle 90}] (10,0)  node [left]{$\tilde \nu_3$} -- (10,2.5) ;
      \draw (12.8,3.3)node[right] {$\tilde\Gamma_3$};
      \draw[middlearrow={triangle 90}] (10,5) -- (17,0) node [right]{$\tilde \nu_4$};
      \draw (12.8,-0.5)node[left] {$\tilde\Gamma_4$};
      \draw[middlearrow={triangle 90}] (10,0) -- (17,0);
    \end{tikzpicture}
    \caption{Left: the network $\Gamma$ in which the edges are oriented  toward the vertex with larger index ($4$ vertices and $4$ edges). Right: a new network $\tilde \Gamma$ obtained by adding an artificial vertex ($5$ vertices and $5$ edges): the oriented edges sharing a given vertex $\nu$ either have all their starting point equal $\nu$, or have all  their terminal point equal $\nu$.}
   \label{fig: Network intro}
  \end{center}
\end{figure}
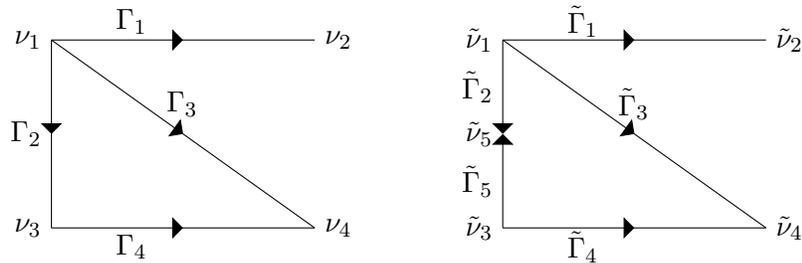

\end{rem}

\subsubsection{Function spaces related to the space variable}
\label{sec:function-spaces}

The set of continuous functions on $\Gamma$ is denoted by $C(\Gamma)$
and we set
\[
\begin{array}[c]{ll}
&PC\left(\Gamma\right) \\ =& \left\{ v : \Gamma \to \R \;:  \hbox{  for all $\alpha\in\mathcal{A}$}, \left|
    \begin{array}[c]{l}
\hbox{$v_\alpha\in C(0,\ell_\alpha)$ }\\
  \hbox{ $v_\alpha$ can be extended by continuity to $[0,\ell_\alpha]$.}     
    \end{array} \right.
\right\}.  
\end{array}
\]
By the definition of piecewise continuous functions $v\in PC(\Gamma)$, for all $\alpha\in \mathcal{A}$, it is possible to define $v|_{\Gamma_\alpha}$ by~\eqref{eq: v at the vertices} and we have $v|_{\Gamma_\alpha}\in C(\Gamma_\alpha)$,
$v_\alpha\in C([0,\ell_{\alpha}])$.

For $m\in\mathbb{N}$, the space of $m$-times continuously differentiable functions on $\Gamma$ is defined by
\[
C^{m}\left(\Gamma\right):=\left\{ v\in C\left(\Gamma\right):v_{\alpha}\in C^{m}\left(\left[0,\ell_{\alpha}\right]\right)\text{ for all }\alpha\in\mathcal{A}\right\}.
\]
Notice that $v\in C^{m}\left(\Gamma\right)$ is assumed to be continuous on $\Gamma$,
and that its restriction $ v_{|\Gamma_\alpha}$ to each edge $\Gamma_\alpha$ belongs 
to $C^m(\Gamma_\alpha)$. 
The space $C^{m}\left(\Gamma\right)$ is endowed with the norm
$ 
\left\Vert v\right\Vert _{C^{m}\left(\Gamma\right)}:= {\sum}_{\alpha\in\mathcal{A}}{\sum}_{k\le m}\left\Vert \partial^{k}v_{\alpha}\right\Vert _{L^{\infty}\left(0,\ell_{\alpha}\right)}
$.
For $\sigma\in\left(0,1\right)$, the space $C^{m,\sigma}\left(\Gamma\right)$, 
contains the functions
$v\in C^{m}\left(\Gamma\right)$ such that $\partial^{m}v_{\alpha}\in C^{0,\sigma}\left(\left[0,\ell_{\alpha}\right]\right)$
for all $\alpha\in \mathcal{A}$; it is endowed with the norm
$$\displaystyle{
\left\Vert v\right\Vert _{C^{m,\sigma}\left(\Gamma\right)}:=\left\Vert v\right\Vert _{C^{m}\left(\Gamma\right)}+\sup_{\alpha\in\mathcal{A}}\sup_{y\ne z \atop y,z\in\left[0,\ell_{\alpha}\right]}\dfrac{\left|\partial^{m}v_{\alpha}\left(y\right)-\partial^{m}v_{\alpha}\left(z\right)\right|}{\left|y-z\right|^{\sigma}}}.$$

For a positive integer $m$ and a function $v\in C^{m}\left(\Gamma\right)$,
we set for $k\le m$,
\begin{equation}
\partial^{k}v\left(x\right)=\partial^{k}v_{\alpha}\left(\pi_{\alpha}^{-1}\left(x\right)\right)\text{ if }x\in\Gamma_{\alpha}\backslash{\mathcal{V}}.\label{eq: derivative}
\end{equation}
For a vertex $\nu$, we define $\partial_{\alpha}v\left(\nu\right)$ as the {\sl outward} directional
derivative of $v|_{\Gamma_{\alpha}}$ at $\nu$ as follows:
\begin{equation}
\partial_{\alpha}v\left(\nu\right):=\begin{cases}
{\displaystyle \lim_{h\rightarrow0^{+}}
\dfrac{v_{\alpha}\left(0\right)-v_{\alpha}\left(h\right)}{h}}, & \text{if }\nu=\pi_{\alpha}\left(0\right),\\
{\displaystyle \lim_{h\rightarrow0^{+}}
\dfrac{v_{\alpha}\left(\ell_{\alpha}\right)-v_{\alpha}\left(\ell_{\alpha}-h\right)}{h}}, & \text{if }\nu=\pi_{\alpha}\left(\ell_{\alpha}\right).
\end{cases}\label{eq: inward derivative}
\end{equation}
For all $i\in I$ and $\alpha\in \cA_i$, setting 
\begin{equation}
  \label{eq:1}
 n_{i\alpha}=\left\{
 \begin{array}[c]{rl}
 1 & \text{if }   \nu_i =    \pi_{\alpha} (\ell_\alpha),\\
 -1 & \text{if }  \nu_i =    \pi_{\alpha} (0),   
 \end{array}\right.
\end{equation}
we have
\begin{equation}
\label{eq:2}
  \partial_\alpha  v(\nu_i)= n_{i\alpha}\,\partial v|_{\Gamma_{\alpha}}(\nu_i)= n_{i\alpha} \, \partial v_ \alpha (\pi^{-1}_\alpha (\nu_i)) .
\end{equation}

\begin{rem}\label{sec:netw-funct-spac}
Changing the orientation of the edge does not change the value of $\partial_\alpha v(\nu)$ in (\ref{eq: inward derivative}). 
\end{rem}

We say that $v$ is Lebesgue-integrable on $\Gamma_{\alpha}$ if $v_{\alpha}$ is Lebesgue-integrable on $(0,\ell_{\alpha})$.
In this case, for all $x_1,x_2\in \Gamma_{\alpha}$,
\begin{eqnarray}\label{int-segment}
\int_{[x_1,x_2]}v\left(x\right)dx:=\int_{\pi_\alpha^{-1}(x_1)}^{\pi_\alpha^{-1}(x_2)} v_{\alpha}\left(y\right)dy.
\end{eqnarray}
When $v$  is Lebesgue-integrable on $\Gamma_{\alpha}$ for all $\alpha\in\mathcal{A}$, we say that
$v$ is Lebesgue-integrable on $\Gamma$ and we define
\begin{eqnarray*}
\int_{\Gamma}v\left(x\right)dx:=\sum_{\alpha\in\mathcal{A}}\int_{0}^{\ell_{\alpha}}v_{\alpha}\left(y\right)dy.
\end{eqnarray*}
The space
$L^{p}\left(\Gamma\right)  =\left\{ v:v|_{\Gamma_{\alpha}}\in L^{p}\left(\Gamma_{\alpha}\right)\text{ for all \ensuremath{\alpha\in\mathcal{A}}}\right\}$,  $p\in [1,\infty]$, 
is endowed with the norm $\left\Vert v\right\Vert _{L^{p}\left(\Gamma\right)}:=  \left( \sum_{\alpha\in\mathcal{A}}\left\Vert v_{\alpha}\right\Vert _{L^{p} \left(0,\ell_{\alpha}\right)}^p \right)^{\frac 1 p}$
if $1\le p<\infty$, and $\max_{\alpha\in \cA} \|v_\alpha\|_{L^\infty\left(0,\ell_{\alpha}\right)}$ if $p=+\infty$.
We shall also need to deal with functions on $\Gamma$ whose restrictions to the edges are  weakly-differentiable:
we shall use the same notations for the weak derivatives.

\begin{defn} \label{def: Sobolev space}
For any integer $s\ge 1$ and any real number $p\ge 1$, the Sobolev space $W^{s,p}_{b}(\Gamma)$ is defined as follows
\[
W^{s,p}_{b}(\Gamma):=\left\{ v:\Gamma\rightarrow\R \text{ s.t. }v_{\alpha}\in W^{s,p}\left(0,\ell_{\alpha}\right)\text{ for all }\alpha\in\mathcal{A}\right\},
\]
and endowed with the norm
\[
\left\Vert v\right\Vert _{W^{s,p}_{b}\left(\Gamma\right)}=\left(\sum^{s}_{k=1}\sum_{\alpha\in\mathcal{A}}
\left\Vert \partial^{k}v_{\alpha}\right\Vert _{L^{p}\left(0,\ell_{\alpha}\right)}^{p}+
\left\Vert v\right\Vert _{L^p(\Gamma)}^{p}\right)^{\frac 1 p}.
\]
For $s\in \N \backslash \{0\}$,  we also set $H^s_b(\Gamma)= W^{s,2}_{b}(\Gamma)$ and
 $H^s(\Gamma)=C(\Gamma)\cap H^s_b(\Gamma)$.
\end{defn}

Finally, when dealing with probability distributions in mean field games, we will often use 
the set $\mathcal{M}$ of  probability densities, i.e., $m\in L^1(\Gamma)$, $m\geq 0$ and $\int_\Gamma m(x)dx=1$.

\subsubsection{Some space-time function spaces}
\label{sec:function-space-time}
The space of continuous real valued functions on $\Gamma\times [0,T]$ is denoted by $C(\Gamma\times [0,T])$.

Let $PC(\Gamma\times [0,T])$  be the space of  the functions $v:\Gamma\times [0,T]\to \R$ such that
\begin{enumerate}
\item for all $t\in [0,T]$, $v(\cdot,t)$ belongs to $PC(\Gamma)$
\item for all $\alpha\in \cA$, $v|_{\Gamma_\alpha\times [0,T]}$ is continuous on $\Gamma_\alpha\times [0,T]$;
\end{enumerate}
For a function $v\in PC(\Gamma\times [0,T])$, $\alpha\in \cA$, we set $v_\alpha(y,t)= v|_{\Gamma_\alpha\times[0,t]}(\pi_\alpha(y), t)$
for all $(y,t)\in[0,\ell_\alpha]\times [0,T]$.

For two nonnegative integers $m$ and $n$, let $C^{m,n}(\Gamma\times [0,T])$ be the space of continuous real valued functions $v$ on $\Gamma\times [0,T]$ such that for all $\alpha\in \cA$,  $v|_{\Gamma_\alpha\times [0,T]} \in  C^{m,n}(\Gamma_\alpha\times [0,T])$. For $\sigma\in (0,1)$, $\tau\in (0,1)$, we define in the same manner $C^{m+\sigma,n+\tau}(\Gamma\times [0,T])$


Useful results on continuous and compact embeddings of space-time function spaces are given in  Appendix~\ref{app:embeddings}.

\subsection{A class of stochastic processes on $\Gamma$}
\label{sec:class-stoch-proc}

After rescaling the edges, it may be assumed that $\ell_{\alpha}=1$ for all $\alpha\in\mathcal{A}$. 
Let $ \mu_{\alpha} , \alpha\in\mathcal{A}$ and  $ p_{i\alpha}, i\in I,\alpha\in\mathcal{A}_{i}$
be positive constants such that $\sum_{\alpha\in\mathcal{A}_{i}}p_{i\alpha}=1$. Consider also a real valued function $a\in PC(\Gamma\times[0,T])$, such that, for all $\alpha\in \cA$, $t\in [0,T]$,
 $a|_{\Gamma_\alpha}(\cdot, t)$ belongs to $C^1(\Gamma_\alpha)$.

As in Remark~\ref{rem:new network}, we make the assumption (\ref{oriented networks}) by possibly adding artificial nodes: if $\nu_i$ is such an artificial node, then $\sharp(\cA_i)=2$, and we assume that $p_{i\alpha}=1/2$ for $\alpha\in \cA_i$. The diffusion parameter $\mu$ has the same value on the two sides of an artificial vertex. Similarly, the function $a$ 
does not have jumps across an artificial vertex.

 Consider a Brownian motion $(W_t)$ defined on the real line. 
Following Freidlin and Sheu  (\cite{FS2000}), we know that there exists a unique Markov process on $\Gamma$ with continuous sample paths that can be written $(X_t, \alpha_t)$ where  $X_t\in \Gamma_{\alpha_t}$ (if $X_t=\nu_i$, $i\in I$, $\alpha_t$ is arbitrarily chosen as the smallest index in $\cA_i$)
 such that, defining the process  $x_t= \pi_{\alpha_t}(X_t)$  with values in $[0,1]$, 
 \begin{itemize}
 \item we have
   \begin{equation}
     \label{eq:3}
    dx_t= \sqrt{2\mu_{\alpha_t}} dW_t + a_{\alpha_t}(x_t,t) dt  + d\ell_{i,t} +d h_{i,t},
   \end{equation}
 \item $\ell_{i,t}$ is continuous non-decreasing process (measurable with respect to the $\sigma$-field generated by $(X_t,\alpha_t)$) which increases only when $X_t=\nu_i$ and $x_t=0$,
\item $h_{i,t}$ is  continuous non-increasing process (measurable with respect to the $\sigma$-field generated by $(X_t,\alpha_t)$) which decreases only when $X_t=\nu_i$ and $x_t=1$,
 \end{itemize}
 and for all function $v\in C^{2,1}(\Gamma\times [0,T])$ such that 
\begin{equation}
\label{eq:4}
  \sum_{\alpha\in\mathcal{A}_{i}}p_{i\alpha}\partial_{\alpha}v\left(\nu_{i},t\right)=0,\; \hbox{ for all }i\in I,t\in [0,T], 
\end{equation}
the process
\begin{equation}\label{eq:5}
  M_t=v(X_t,t)-\int_0^t \Bigl(\partial_t  v\left(X_s,s\right)
 +\mu_{\alpha_s}\partial^{2}v\left(X_s,s\right)+a|_{\Gamma_{\alpha_s}}\left(X_s,s\right)\partial v\left(X_s,s\right) \Bigr) ds
\end{equation}
is a martingale, i.e.,
\begin{equation}
  \label{eq:6}
\mathbb{E}(M_t| X_s)= M_s, \quad\hbox{for all } 0\le s<t\le T.
\end{equation}
For  what follows, it will be convenient to set 
\begin{equation}
 D:= \left\{ u\in C^{2}\left(\Gamma\right):\sum_{\alpha\in\mathcal{A}_{i}}p_{i\alpha}\partial_{\alpha}u\left(\nu_{i}\right)=0,\; \hbox{ for all }i\in I \right\}.\label{Kirchhoff condition}
\end{equation}

\begin{rem}\label{sec:class-stoch-proc-2}
  Note that  in (\ref{eq:4}),  the condition at boundary vertices boils down to a Neumann condition.
\end{rem}

\begin{rem}
  \label{sec:class-stoch-proc-1}
The assumption that all  the edges have unit length is not restrictive, because we can always rescale the constants $\mu_\alpha$ and the piecewise continuous function $a$. 
\end{rem}

The goal is to derive the boundary value problem satisfied by
the law of the stochastic process $X_t$. Since the derivation here
is formal, we assume that 
the law of the stochastic process $X_t$ is a measure
which is absolutely continuous with respect to the Lebesgue measure on $\Gamma$ and regular
enough so that the following computations make sense.
Let $m(x,t)$ be its density. We have
\begin{equation}
   \label{eq:11}
\mathbb{E}\left[v\left(X_t,t\right)\right]=\int_{\Gamma}v\left(x,t\right)m\left(x,t\right)dx, \quad  \hbox{ for all } v\in PC(\Gamma\times [0,T]).
 \end{equation}
Consider $u\in C^{2,1}(\Gamma\times [0,T])$ such that for all $t\in [0,T]$, $u(\cdot, t)\in D$. Then, from (\ref{eq:5})-(\ref{eq:6}), we see that 
\begin{equation}
  \label{eq:12}
  \mathbb{E}\left[u\left(X_t,t\right)\right]=
  \mathbb{E}\left[u\left(X_0,0\right)\right]
+ \mathbb{E}\left[ \int_0^t \Bigl(\partial_t  u\left(X_s,s\right)
 +\mu_{\alpha_s}\partial^{2}u\left(X_s,s\right)+a|_{\Gamma_{\alpha_s}}\left(X_s,s\right)\partial u\left(X_s,s\right) \Bigr) ds
\right].
\end{equation}
Using~\eqref{eq:11} and taking the time-derivative of each member of (\ref{eq:12}), we obtain
\begin{eqnarray*}
\int_\Gamma \partial_t(um)(x,t)dx =\mathbb{E}\Bigl(\partial_t  u\left(X_t,t\right)
 +\mu_{\alpha_s}\partial^{2}u\left(X_t,t\right)+a|_{\Gamma_{\alpha_s}}\left(X_t,t\right)\partial u\left(X_t,t\right) \Bigr).
\end{eqnarray*}
Using again~\eqref{eq:11}, we get
\begin{eqnarray}\label{eq:13}
 \int_\Gamma
  \left(   \mu \partial^2 u(x,t) + a(x,t)\partial u(x,t)\right)m(x,t)dx
= \int_\Gamma u(x,t)\partial_tm(x,t)dx.
\end{eqnarray}
By integration by parts, recalling~\eqref{oriented networks}, we get
\begin{eqnarray}
0 &=& \sum_{\alpha\in \mathcal{A}}  \int_{\Gamma_\alpha}
  \left( \partial_tm(x,t)-  \mu_\alpha \partial^2 m(x,t) + \partial (am)(x,t)\right)u(x,t)dx\nonumber\\
 & &- \sum_{i\in I}\sum_{\alpha\in\mathcal{A}_{i}} \left[n_{i\alpha} a|_{\Gamma_\alpha}(\nu_i,t)  m|_{\Gamma_\alpha}(\nu_i,t)
    - \mu_\alpha  \partial_\alpha m(\nu_i,t) \right]  u|_{\Gamma_\alpha}(\nu_i,t)  \nonumber\\
  & &-  \sum_{i\in I}\sum_{\alpha\in\mathcal{A}_{i}} \mu_\alpha m|_{\Gamma_\alpha}(\nu_i,t)  \partial_\alpha u(\nu_i,t),
  \label{form283}
\end{eqnarray}
where $n_{i\alpha}$ is defined in~\eqref{eq:1}.

We choose first, for every  $\alpha \in \cA$, a smooth function $u$ which is
compactly supported in $(\Gamma_\alpha \backslash \cV)\times [0,T] $. Hence $u|_{\Gamma_\beta}(\nu_i,t)=0$
and  $\partial_\beta u(\nu_i,t)=0$ for all $i\in I, \beta\in \mathcal{A}_{i}$.
Notice that $u(\cdot,t)\in D$.
It follows that $m$ satisfies 
   \begin{equation}
     \label{eq:14}
     \left(\partial_t m-\mu_{\alpha}\partial^{2}m +\partial\left(m a\right) \right)(x,t)=0, \quad
     \text{for $x\in \Gamma_\alpha \backslash \cV$, $t\in (0,T)$, $\alpha\in \cA$.}
   \end{equation}

For a smooth function $\chi: [0,T]\to \R$ compactly supported in $(0,T)$, we may choose for every $i\in I$, a smooth function $u$ such that 
 $u(\nu_j,t)=\chi(t) \delta_{i,j}$ for all $t\in [0,T]$, $j\in I$ and
 $\partial_\alpha  u(\nu_j,t)=0$ for all $t\in [0,T]$, $j\in I$ and $\alpha\in \cA_j$, we infer
 a condition for $m$ at the vertices,
 \begin{eqnarray}
 \sum_{\alpha\in\mathcal{A}_{i}} n_{i\alpha} a|_{\Gamma_\alpha}(\nu_i,t)  m|_{\Gamma_\alpha}(\nu_i,t)
 - \mu_\alpha  \partial_\alpha m(\nu_i,t)= 0
 \qquad \text{for all $i\in I$,  $t\in (0,T)$.}
 \end{eqnarray}
 This condition is called a transmission condition if $\nu_i$ is a transition
 vertex and reduces to a Robin boundary condition when $\nu_i$ is a boundary
 vertex.

 Finally, for a smooth function $\chi: [0,T]\to \R$ compactly supported in $(0,T)$,
 for every transition vertex $\nu_i\in \mathcal{V}\setminus \partial\Gamma$ and $\alpha, \beta \in \cA_i$, we choose
 $u$ such that  
 \begin{itemize}
 \item $u(\cdot,t)\in D$
 \item $\partial_\alpha  u(\nu_i,t)= \chi(t)/p_{i\alpha}$,  $\partial_\beta  u(\nu_i)= -\chi(t)/p_{i\beta}$, $\partial_\gamma  u(\nu_i)=0$ if $\gamma\in \cA_i \backslash\{\alpha, \beta\}$
 \item The directional derivatives of $u$ at the vertices $\nu\not= \nu_i$ are $0$.
 \end{itemize}
Using such a test-function  in~\eqref{form283} yields a jump condition for $m$,
\begin{eqnarray*}
\dfrac{m|_{\Gamma_{\alpha}}\left(\nu_{i},t\right)}{\gamma_{i\alpha}}=\dfrac{m|_{\Gamma_{\beta}}\left(\nu_{i},t\right)}{\gamma_{i\beta}},\quad\text{for all }\alpha,\beta\in\mathcal{A}_{i},\nu_{i}\in\mathcal{V}, t\in (0,T),
\end{eqnarray*}
in which
\begin{equation}
  \label{eq:15} \gamma_{i\alpha}= \frac {p_{i\alpha}}{\mu_\alpha},  \quad \hbox{for all } i\in I,   \alpha\in \cA_i. 
\end{equation}

Summarizing, we get the following boundary value problem  for $m$ (recall that the coefficients $n_{i\alpha}$ are defined in~\eqref{eq:1}):
\begin{equation}
  \label{eq:16}
\left\{  \begin{split}
\partial_t m-\mu_{\alpha}\partial^{2}m+\partial\left(m a\right)=0,\quad \quad  & (x,t)\in\left(\Gamma_{\alpha}\backslash\mathcal{V}\right)\times (0,T),\,\alpha\in\mathcal{A},\\
\sum_{\alpha\in\mathcal{A}_{i}}\mu_{\alpha}\partial_{\alpha}m\left(\nu_{i},t\right)
-n_{i\alpha} a|_{\Gamma_{\alpha}} (\nu_i) m|_{\Gamma_{\alpha}}\left(\nu_{i},t\right)=0,\quad \quad  & t\in (0,T),\nu_{i}\in\mathcal{V},\\
\dfrac{m|_{\Gamma_{\alpha}}\left(\nu_{i},t\right)}{\gamma_{i\alpha}}=\dfrac{m|_{\Gamma_{\beta}}\left(\nu_{i},t\right)}{\gamma_{i\beta}},\quad \quad  &t\in (0,T),~\alpha,\beta\in\mathcal{A}_{i},~\nu_{i}\in\mathcal{V},\\
m(x,0)=m_0(x), \quad \quad & x\in \Gamma.   
  \end{split}\right.
\end{equation}

\subsection{Formal derivation of the MFG system on $\Gamma$}\label{subsec: A derivation of the MFG system}

Here we aim at obtaining the MFG system of forward-backward partial differential equations on the network, at least formally.
The assumptions that we are going to make below on the optimal control problem are a little restrictive, for two reasons: first, we wish to avoid some
 technicalities linked to the measurability of the control process; second, the assumptions on the costs must be consistent  with the assumptions that we shall make on the Hamiltonian, see Section~\ref{sec:assumptions} below. In particular, we shall impose that the Hamiltonian is globally Lipschitz continuous.  More general and difficult
cases, e.g., quadratic Hamiltonians, will be the subject of
a future work.

Consider a continuum of indistinguishable agents moving on the network $\Gamma$.
The  state of a representative agent at time $t$ is a time-continuous controlled stochastic
 process $X_t$
 as defined in Section~\ref{sec:class-stoch-proc}, where the control is the drift $a_t$, supposed to be in the form $a_t =a(X_t,t)$.
Let $m(\cdot, t)$ be the probability measure on $\Gamma$ that describes the distribution of states at time $t$.

For a representative agent, the optimal control problem  is of the form
\begin{equation}\label{eq:40}
v\left(x,t\right)=\inf_{a_s}\mathbb{E}_{xt}\left[\int_{t}^{T}\left(L\left(X_{s},a_{s}\right)+\ccV\left[m(\cdot,t)\right]\left(X_{s}\right)\right)ds+v_T\left(X_{T}\right)\right],
\end{equation}
where $\mathbb{E}_{xt}$ stands for the expectation conditioned by the event $X_t=x$.

We discuss the ingredients appearing in~\eqref{eq:40}:

\begin{itemize}
  \item We assume that the control is in a feeback form $a_t=a(X_t,t)$ where the function $a$, defined on $\Gamma\times [0,T]$,
      is sufficiently regular in the edges of the network. Then, almost surely if $X_t\in \Gamma_\alpha\backslash \cV$,
      \begin{displaymath}
        d  \pi_{\alpha}^{-1} (X_t)=  a_{\alpha}(\pi_{\alpha}^{-1} (X_t),t) dt + \sqrt{ 2\mu_{\alpha}} dW_t.
      \end{displaymath}
      An informal way to describe the behavior of the process at the vertices is  as follows: if $X_t$ hits $\nu_{i}\in\mathcal{V}$, then it enters $\Gamma_{\alpha}$,
      $\alpha\in\mathcal{A}_{i}$  with probability $p_{i\alpha}>0$ ($p_{i\alpha}$ was introduced in Section~\ref{sec:class-stoch-proc}). 
      We assume that there is an optimal feedback law $a^\star$.
  \item  We assume that for all $\alpha \in \cA$, $a_\alpha$  maps $[0,\ell_\alpha]\times [0,T] $ to a compact interval $A_\alpha = [\underline{a_\alpha}, \overline{a_\alpha}]$.
  \item
      The contribution of the control to the running cost involves the  Lagrangian $L$, i.e., a real valued function defined on 
      $\cup_{\alpha \in \mathcal{A}}  \left (\Gamma_\alpha \backslash \mathcal{V}\times A_\alpha \right) $. 
      If $x\in \Gamma_\alpha \backslash \mathcal{V}$ and $a\in A_ \alpha$, then $L(x,a)=L_\alpha(\pi_\alpha^{-1}(x),a)$, 
      where $L_\alpha$ is a continuous real valued function defined on $[0,\ell_\alpha]\times A_\alpha$.
      We assume that $L_\alpha(x,\cdot)$ is strictly convex on $A_\alpha$.
  \item
    The contribution of the  distribution of states to the running ccost  involves the coupling cost operator, which can either be nonlocal, i.e.,
    $\ccV:\mathcal{P}\left(\Gamma\right)\rightarrow   \mathcal{C}^2 (\Gamma)$
    (where $\mathcal{P}\left(\Gamma\right)$ is the set of  Borel probability
    measures on $\Gamma$),
    or local, i.e., $\ccV[m](x)= F(m(x))$
    for a continuous function $F: \R^+ \to \R$,
    assuming that $m$ is absolutely continuous with respect to the Lebesgue measure and identifying with its density.
    
  \item The last term is the terminal cost $v_T$, which depends only on the state variable for simplicity. 
    

\end{itemize}

Under suitable additional assumptions,  Ito calculus as in \cite{FS2000, FW1993}
 and the dynamic programming principle lead to the following
HJB equation on $\Gamma$,  more precisely the following boundary value problem
\begin{equation}
\begin{cases}
-\partial_{t}v-\mu_{\alpha}\partial^{2}v+H\left(x,\partial v\right)=\ccV[m(\cdot,t)](x), & \text{in }\left(\Gamma_{\alpha}\backslash\mathcal{V}\right)\times\left(0,T\right),\alpha\in \cA,\\
\ds
\sum_{\alpha\in\mathcal{A}_{i}}\gamma_{i\alpha}\mu_{\alpha}\partial_{\alpha}v\left(\nu_{i},t\right)=0, & \text{if }\left(\nu_{i},t\right)\in\mathcal{V}\times\left(0,T\right),\\
v|_{\Gamma_{\alpha}}\left(\nu_{i},t\right)=v|_{\Gamma_{\beta}}\left(\nu_{i},t\right) & \text{for all \ensuremath{\nu_{i}\in\mathcal{V},t\in\left(0,T\right)\alpha,\beta\in\mathcal{A}_{i}},}\\
v\left(x,T\right)=v_{T}\left(x\right) & \text{in }\Gamma.
\end{cases}\label{eq: introduction HJ}
\end{equation}
We refer to  \cite{LL2006-A,MR2271747,LL2007} for the interpretation of the value function $v$.
Let us comment  the different equations in~\eqref{eq: introduction HJ}:

\begin{enumerate}
\item The first equation is a HJB equation the Hamiltonian $H$ of which is a  real valued function defined
on $\left(\cup_{\alpha \in \mathcal{A}} \Gamma_\alpha \backslash \mathcal{V} \right) \times \R$ given by
\begin{eqnarray}\label{defHamilt}
H\left(x,p\right)=\sup_{a\in A_\alpha}\left\{ -a p-L_{\alpha}\left(\pi_\alpha^{-1}(x),a\right)\right\}
\quad \text{for $x\in \Gamma_\alpha \backslash \mathcal{V} $ and $p\in \R$}.
\end{eqnarray}
We assume that $L$ is such that
the Hamiltonians $H|_{\Gamma_\alpha\times \R}$ are Lipschitz continuous with
respect to $p$ and $C^1$.

\item  The second equation in \eqref{eq: introduction HJ} is  a Kirchhoff
transmission condition (or Neumann boundary condition if $\nu_i\in\partial\Gamma$);  it is the consequence of the assumption
on the behavior of $X_s$ at vertices. It involves the positive constants  $\gamma_{i\alpha}$ defined in~\eqref{eq:15}.
\item
 The third condition means in particular that $v$ is continuous at the vertices.
\item 
The fourth condition is a terminal condition for the backward in time HJB equation.
\end{enumerate}
If \eqref{eq: introduction HJ} has a smooth solution, then it provides a feedback law for the optimal control problem, i.e.,
 \[
 a^{\star}(x,t)=-\partial_{p}H\left(x,\partial v\left(x ,t \right)\right).
 \]
 At the MFG equilibrium, $m$ is the density of the invariant measure associated with the optimal feedback law, so,
 according to Section~\ref{sec:class-stoch-proc}, it satisfies~\eqref{eq:16}, 
 where $a$ is replaced by $a^\star=-\partial_{p}H\left(x,\partial v\left(x,t\right)\right)$. We end up with the following system:
\begin{equation}
{\displaystyle \begin{cases}
-\partial_{t}v-\mu_{\alpha}\partial^{2}v+H\left(x,\partial v\right)=\ccV\left[m(\cdot, t)\right](x), & \left(x,t\right)\in\left(\Gamma_{\alpha}\backslash\mathcal{V}\right)\times\left(0,T\right),\alpha\in \cA,\\
\partial_{t}m-\mu_{\alpha}\partial^{2}m-\partial\left(m\partial_{p}H\left(x,\partial v\right)\right)=0, & \left(x,t\right)\in\left(\Gamma_{\alpha}\backslash\mathcal{V}\right)\times\left(0,T\right),\alpha\in \cA,\\
{\displaystyle \sum_{\alpha\in\mathcal{A}_{i}}\gamma_{i\alpha}\mu_{\alpha}\partial_{\alpha}v\left(\nu_{i},t\right)=0,} & \left(\nu_{i},t\right)\in\mathcal{V}\times\left(0,T\right),\\
\ds
\sum_{\alpha\in\mathcal{A}_{i}}\mu_{\alpha}\partial_{\alpha}m\left(\nu_{i},t\right)+n_{i\alpha}\partial_{p}H^\alpha\left(\nu_{i},\partial v|_{\Gamma_{\alpha}}(\nu_i,t)\right)m|_{\Gamma_{\alpha}}\left(\nu_{i},t\right)=0, & \left(\nu_{i},t\right)\in\mathcal{V}\times\left(0,T\right),\\
v|_{\Gamma_{\alpha}}\left(\nu_{i},t\right)=v|_{\Gamma_{\beta}}\left(\nu_{i},t\right),\ \dfrac{m|_{\Gamma_{\alpha}}\left(\nu_{i},t\right)}{\gamma_{i\alpha}}=\dfrac{m|_{\Gamma_{\beta}}\left(\nu_{i},t\right)}{\gamma_{i\beta}}, & \alpha,\beta\in\mathcal{A}_{i},\left(\nu_{i},t\right)\in\mathcal{V}\times\left(0,T\right),\\
v\left(x,T\right)=v_{T}\left(x\right),\ m\left(x,0\right)=m_{0}\left(x\right) & x\in\Gamma,
\end{cases}}\label{eq: MFG system}
\end{equation}
where $H^{\alpha}:=H|_{\Gamma_\alpha\times \R}$.
At  a vertex $\nu_i$, $i\in I$, the transmission conditions for both  $v$ and $m$
 consist of $d_{\nu_{i}}= \sharp( \cA_i)$ linear relations, which is the appropriate number of relations to 
have a well posed problem. If $\nu_i\in \partial \Gamma$, there is of course only one Neumann like  condition for $v$ and for $m$.

\subsection{Assumptions and main results}

Before giving the precise definition of solutions of the MFG system~\eqref{eq: MFG system}
and stating our result, we need to introduce some suitable functions spaces.

\subsubsection{Function spaces related to the Kirchhoff conditions}
\label{sec:funct-spac-relat}
The following function spaces will be the key ingredients in order to build  weak
solutions of~\eqref{eq: MFG system}.
\begin{defn}
\label{def: functional spaces}
We define two Sobolev spaces:
$
V: = H^1(\Gamma), 
$
and 
\begin{equation}
W:=\left\{ w:\Gamma\rightarrow\mathbb{R}:\;w\in H_{b}^{1}\left(\Gamma\right)  \text{  and }\dfrac{w|_{\Gamma_{\alpha}}\left(\nu_{i}\right)}{\gamma_{i\alpha}}=\dfrac{w|_{\Gamma_{\beta}}\left(\nu_{i}\right)}{\gamma_{i\beta}}\text{ for all }
i\in I, \; \alpha,\beta\in\mathcal{A}_{i}\right\}, \label{eq: space W}
\end{equation}
which is a subspace of $H^{1}_{b}(\Gamma)$.
\end{defn}

\begin{defn}
\label{def: test function}
Let the function $\varphi\in W$ be defined as follows:
\begin{equation}
\begin{cases}
\varphi_{\alpha}\text{ is affine on }\left(0,\ell_{\alpha}\right),\\
\varphi|_{\Gamma_{\alpha}}\left(\nu_{i}\right)=\gamma_{i\alpha},\text{ if }\alpha\in\mathcal{A}_{i},\\
\varphi\text{ is constant on the edges \ensuremath{\Gamma_{\alpha}} which touch the boundary of \ensuremath{\Gamma}}.
\end{cases}\label{eq: test function 1}
\end{equation}
Note that $\varphi$ is positive and bounded. 
We  set $\overline{\varphi}=\max_{\Gamma}\varphi$, $\underline{\varphi}=\min_{\Gamma}\varphi$.
\end{defn}

\begin{rem}
\label{rem: test function}One can see that $v\in V\longmapsto v\varphi$
is an isomorphism from $V$ onto $W$ and $w\in W\longmapsto w\varphi^{-1}$
is the inverse isomorphism.
\end{rem}

\begin{defn}\label{sec:funct-spac-relat-1}
Let the function space $\mathcal{W}\subset W$  be defined as follows:
\begin{equation}
\mathcal{W}:=\left\{ m:\Gamma\to \R: m_{\alpha}\in C^{1}\left(\left[0,\ell_{\alpha}\right]\right)\text{ and }\dfrac{m|_{\Gamma_{\alpha}}\left(\nu_{i}\right)}{\gamma_{i\alpha}}=\dfrac{m|_{\Gamma_{\beta}}\left(\nu_{i}\right)}{\gamma_{i\beta}}\text{ for all \ensuremath{i\in I,\alpha,\beta\in\mathcal{A}_{i}}}\right\} .\label{eq: regular space}
\end{equation}
\end{defn}

\subsubsection{Running assumptions (H)}

\label{sec:assumptions}

\begin{itemize}
  
\item[] (Diffusion constants) $(\mu_\alpha)_{\alpha\in \mathcal{A}}$ is a family of positive numbers.

\item[] (Jump coefficients)  $( \gamma_{i\alpha})_{\alpha\in\mathcal{A}_{i}} $ is a family of positive numbers such that 
  $\ds\sum_{\alpha\in \cA_i} \gamma_{i\alpha}\mu_\alpha=1$.

\item[] (Hamiltonian)   The Hamiltonian $H$ is defined by the collection $H^{\alpha}:=H|_{\Gamma_\alpha\times \R}$, $\alpha\in \cA$: we assume that 
\begin{align}
 & H^{\alpha}\in C^{1}\left(\Gamma_\alpha \times\mathbb{R}\right),\label{eq: Hamiltonian}\\
 & H^{\alpha}\left(x,\cdot\right)\text{is convex in }p, &\text{ for any }x\in \Gamma_\alpha,\label{eq: Hamiltonian is convex}\\
 & H^{\alpha}\left(x,p\right)\le C_{0}(\left|p\right|+1),&\text{ for any }\left(x,p\right)\in \Gamma_\alpha\times\mathbb{R},\label{eq:17} \\
 & \left|\partial_{p}H^{\alpha}\left(x,p\right)\right|\le C_{0}, &\text{ for any }\left(x,p\right)\in \Gamma_\alpha\times\mathbb{R},\label{eq:18}\\
 & \left|\partial_{x}H^{\alpha}\left(x,p\right)\right|\le C_{0} (|p|+1), & \text{ for any }\left(x,p\right)\in \Gamma_\alpha\times\mathbb{R},\label{eq:19}
\end{align}
for a constant $C_0$ independent of $\alpha$.
\item[](Coupling operator)
We assume that $\ccV$ is a continuous  map from $L^2(\Gamma)$ to $L^2(\Gamma)$, such that for all $m\in L^2(\Gamma)$,
\begin{equation}\label{eq:20}
\|\ccV[m]\|_{L^2(\Gamma)}\le C (\|m\|_{L^2(\Gamma)}+1).
\end{equation}
Note that such an assumption is satisfied by local operators of the form $\ccV[m](x)=F(m(x))$ where $F$ is a Lipschitz-continuous function.

\item[] (Initial and terminal data)  $m_0\in L^2(\Gamma)\cap \mathcal{M}$ and $v_T\in H^1(\Gamma)$.
\end{itemize}
The above set of assumptions, referred to as (H),  will be the running assumptions hereafter.
We will use the following notation: $\underline{\mu}:= {\rm min}_{\alpha \in \mathcal{A}}\,\mu_\alpha >0$
and $\overline{\mu}:= {\rm max}_{\alpha \in \mathcal{A}}\, \mu_\alpha$.

\subsubsection{Strictly increasing coupling}
\label{sec:strict-increas}
              
\begin{def}
\label{sec:running-assumptions}
  We will also say that the coupling $\ccV$ is strictly increasing if, for any $m_1,m_2\in \mathcal{M}\cap L^2(\Gamma)$,
\begin{eqnarray*}
\int_\Gamma  (m_1-m_2)(\ccV[m_1]-\ccV[m_2])dx \geq 0
\end{eqnarray*}
and equality implies $m_1=m_2$. 
\end{def}

\subsubsection{ Stronger assumptions on the coupling operator}\label{sec:strong-assumpt-coupl}
We will sometimes need to strengthen the assumptions on the coupling operator, namely 
that $\ccV$ has the following  smoothing properties:

 $\ccV$ maps the topological dual of $W$ to $H^1_b(\Gamma)$; more precisely, $\ccV$ defines 
a   Lipschitz map from 
 $W'$ to $H^1_b(\Gamma)$.

Note that such an assumption is not satisfied by local operators.


\subsubsection{Definition of solutions and main result}
\label{sec:main-result}

\begin{defn} (solutions of the MFG system)
  \label{sec:main-result-1}
A weak solution of the Mean Field Games system \eqref{eq: MFG system}
is a pair $\left(v,m\right)$ such that
\begin{eqnarray*}
&&v\in L^{2}\left(0,T;H^{2}\left(\Gamma\right)\right)  \cap C([0,T]; V),
\ \partial_{t}v\in L^{2}\left(0,T;L^{2}\left(\Gamma\right)\right),\\
&&m\in L^{2}\left(0,T;W\right)\cap  C((0,T]; L^2(\Gamma)\cap \mathcal{M} ),\ \partial_{t}m\in L^{2}\left(0,T;V'\right), 
\end{eqnarray*}
$v$ satisfies
\begin{eqnarray*}
  \left\{
  \begin{array}{l}
\ds -\sum_{\alpha\in\mathcal{A}}\int_{\Gamma_{\alpha}}\left[\partial_{t}v\left(x,t\right)\mathsf{w}\left(x\right)+\mu_{\alpha}\partial v\left(x,t\right)\partial\mathsf{w}\left(x\right)+H\left(x,\partial v\left(x,t\right)\right)\mathsf{w}\left(x\right)\right]dx\\
\hspace*{4cm}\ds  =  \int_{\Gamma} \ccV[m(\cdot,t)](x) \mathsf{w}\left(x\right)dx,
  \quad\text{for all \ensuremath{\mathsf{w}\in W},  a.e. $t\in (0,T)$,}\\
v(x,T)=v_T(x)  \quad\text{for a.e. $x\in\Gamma$},
\end{array}
  \right.
\end{eqnarray*}
and $m$ satisfies
\begin{eqnarray*}
  \left\{
  \begin{array}{l}
\ds \sum_{\alpha\in\mathcal{A}}\int_{\Gamma_{\alpha}}\left[\partial_{t}m\left(x,t\right)\mathsf{v}\left(x\right)dx+\mu_{\alpha}\partial m\left(x,t\right)\partial\mathsf{v}\left(x\right)+\partial_{p}H\left(x,\partial v\left(x,t\right)\right)m\left(x,t\right)\partial\mathsf{v}\left(x\right)\right]dx\\
\hspace*{8cm}\ds  =  0,\quad\text{for all \ensuremath{\mathsf{v}\in V}, a.e. $t\in (0,T)$,} \\
m(x,0)=m_0(x)\quad\text{for a.e. $x\in\Gamma$},
\end{array}
  \right.
\end{eqnarray*}
where $V$ and $W$ are introduced in Definition~\ref{def: functional spaces}.
\end{defn}


We are ready to state the main result:
\begin{thm}
  \label{thm: MFG system} Under assumptions (H), 
  \begin{itemize}
  \item[(i)] (Existence)
    There exists a weak solution $\left(v,m\right)$
    of~\eqref{eq: MFG system}. 
  \item[(ii)] (Uniqueness) If $\ccV$ is strictly increasing (see~\ref{sec:strict-increas}), then the solution is unique.
  \item[(iii)]  (Regularity) If $\ccV$ satisfies furthermore the stronger assumptions made in Section \ref{sec:strong-assumpt-coupl}  and 
    if $v_T\in C^{2+\eta}(\Gamma)\cap D$ for some $\eta\in (0,1)$ ($D$ is  given in (\ref{Kirchhoff condition})),
    then $v\in C^{2,1}(\Gamma\times [0,T])$. \\
Moreover, if  for all $\alpha \in \cA$, $\partial_p  H^\alpha (x, p)$ is a Lipschitz
 function defined in $\Gamma_\alpha \times \R$, and if $m_0\in W$,
 then  $m\in C([0,T];W)\cap W^{1,2}(0,T; L^2(\Gamma))\cap L^2 (0,T; H^2_b(\Gamma))$.
 
  \end{itemize}
\end{thm}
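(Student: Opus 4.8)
The plan is to prove (i) by a decoupling fixed-point argument, (ii) by the Lasry--Lions monotonicity method adapted to the network, and (iii) by a bootstrap on the regularity theory of Sections~\ref{sec:fokk-planck-equat} and~\ref{sec:hamilt-jacobi-equat}. For existence I would define a map $\Phi$ on a suitable subset of $C([0,T];L^2(\Gamma))$ as follows: given $m$, form the coupling term $\ccV[m(\cdot,t)]\in L^2(\Gamma)$, solve the backward HJB equation (first, third and fifth lines of~\eqref{eq: MFG system}) with terminal datum $v_T$ to obtain $v$ in the weak class above, using the well-posedness and a priori estimates established in Section~\ref{sec:hamilt-jacobi-equat}; then set the drift $b:=-\partial_p H(x,\partial v)$, solve the forward Fokker--Planck equation (second, fourth, fifth lines of~\eqref{eq: MFG system}) with this drift and initial datum $m_0$ via Section~\ref{sec:fokk-planck-equat}, and set $\tilde m=\Phi(m)$. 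A fixed point is a weak solution.

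The structural feature that makes the a priori estimates close is the uniform bound $|\partial_p H^\alpha|\le C_0$ from~\eqref{eq:18}: the drift $b$ is bounded by $C_0$ \emph{independently of $m$ and $v$}, so the Fokker--Planck estimates depend only on $\|b\|_{L^\infty}\le C_0$ and $\|m_0\|_{L^2}$. They produce a bounded set in $L^2(0,T;W)\cap C([0,T];L^2(\Gamma)\cap\mathcal{M})$ with $\partial_t m$ bounded in $L^2(0,T;V')$, independent of the input. By the Aubin--Lions type compact embeddings of Appendix~\ref{app:embeddings}, this set is convex, closed, and relatively compact in $C([0,T];L^2(\Gamma))$, and $\Phi$ maps it into itself. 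Continuity of $\Phi$ follows from continuity of $\ccV$ on $L^2$, from the stability of the HJB solution — in particular strong convergence of $\partial v_n\to\partial v$, needed to pass to the limit in the nonlinear drift $\partial_p H(x,\partial v_n)$, which is supplied by the compactness of the HJB solution map — and from continuous dependence of the Fokker--Planck solution on its drift. Schauder's theorem then yields (i).

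\textbf{Uniqueness.} Assume $\ccV$ strictly increasing (\S\ref{sec:strict-increas}) and let $(v_1,m_1),(v_2,m_2)$ be two solutions; write $v=v_1-v_2$, $m=m_1-m_2$. The identity is that $\int_0^T\frac{d}{dt}\int_\Gamma vm\,dx\,dt$ vanishes, since $m(\cdot,0)=0$ and $v(\cdot,T)=0$. Expanding $\partial_t v$ and $\partial_t m$ through the two weak formulations — using $m\in W$ as test function against the HJB equation and $v\in V$ against the Fokker--Planck equation — produces interior terms plus vertex terms. The point specific to the network is that the vertex terms cancel: this is precisely why the dual spaces $V$, $W$ and the weights $\gamma_{i\alpha}$ of~\eqref{eq:15} are introduced, so that the Kirchhoff condition $\sum_\alpha\gamma_{i\alpha}\mu_\alpha\partial_\alpha v(\nu_i)=0$ for $v$ and the dual transmission condition $m|_{\Gamma_\alpha}/\gamma_{i\alpha}=m|_{\Gamma_\beta}/\gamma_{i\beta}$ for $m$ are adjoint under the $V$--$W$ pairing. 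After cancellation one is left, as in the Euclidean case, with
\[
\int_0^T\!\!\int_\Gamma (m_1-m_2)\big(\ccV[m_1]-\ccV[m_2]\big)\,dx\,dt
+\sum_{j=1}^2\int_0^T\!\!\int_\Gamma m_j\,E_j\,dx\,dt=0,
\]
where $E_j=H(x,\partial v_{3-j})-H(x,\partial v_j)-\partial_p H(x,\partial v_j)(\partial v_{3-j}-\partial v_j)\ge 0$ by convexity~\eqref{eq: Hamiltonian is convex}. Since $m_j\ge 0$ and $\ccV$ is monotone, both terms are nonnegative, hence vanish; strict monotonicity forces $m_1=m_2$, whence $v_1,v_2$ solve the same HJB problem and $v_1=v_2$ by uniqueness for HJB.

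\textbf{Regularity and main obstacle.} For (iii) I would bootstrap. Under the stronger smoothing hypothesis of~\S\ref{sec:strong-assumpt-coupl}, $\ccV[m(\cdot,t)]$ lies in $H^1_b(\Gamma)$ with controlled norm; together with $v_T\in C^{2+\eta}(\Gamma)\cap D$ (a terminal datum compatible with the Kirchhoff condition), the Schauder-type parabolic regularity for the HJB equation on the network developed in Section~\ref{sec:hamilt-jacobi-equat} upgrades the solution to $v\in C^{2,1}(\Gamma\times[0,T])$. With $v$ this regular and each $\partial_p H^\alpha$ Lipschitz, the drift $\partial_p H(x,\partial v)$ is bounded and Lipschitz in space, so the Fokker--Planck equation becomes a genuinely linear parabolic problem with regular coefficients; with $m_0\in W$ compatible, the maximal-regularity theory of Section~\ref{sec:fokk-planck-equat} gives $m\in C([0,T];W)\cap W^{1,2}(0,T;L^2(\Gamma))\cap L^2(0,T;H^2_b(\Gamma))$. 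The delicate points in (i)--(ii) are the limit passage in the nonlinear drift, which demands strong $H^1$-in-space convergence of the $v_n$ up to the vertices, and the cancellation of vertex terms in the monotonicity identity. The genuinely hard input, however, is the $C^{2,1}$ regularity for the HJB equation with general Kirchhoff conditions invoked in (iii): this is the main novelty proved in Section~\ref{sec:hamilt-jacobi-equat}, which we take as given here.
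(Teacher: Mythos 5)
Your overall strategy coincides with the paper's: a Schauder fixed point for existence, the Lasry--Lions monotonicity identity for uniqueness (with the key observation, which you state correctly, that the Kirchhoff condition in $V$ and the jump condition in $W$ are adjoint so that the vertex terms cancel), and a bootstrap through the regularity theorems of Sections~\ref{sec:fokk-planck-equat} and~\ref{sec:hamilt-jacobi-equat} for part (iii). The one structural difference is the direction of your fixed-point map: you iterate $m\mapsto v\mapsto\tilde m$ and seek the fixed point in a space of densities, extracting compactness from the Fokker--Planck a priori estimates (which are uniform because $|\partial_p H|\le C_0$ makes the drift bounded independently of the input); the paper iterates $v\mapsto m\mapsto\tilde v$ on $L^{2}(0,T;V)$ and extracts compactness from the HJB estimates of Theorem~\ref{thm: regularity for v} via Aubin--Lions. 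Both closures work, and your observation that the drift bound decouples the Fokker--Planck estimates from the input is exactly the structural feature the paper also exploits.

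One technical point needs repair. You claim that the image set is relatively compact in $C([0,T];L^{2}(\Gamma))$ by the embeddings of Appendix~\ref{app:embeddings}; but Lemma~\ref{aubin-lions-lem} only yields compactness in $L^{p}(0,T;X)$, and the Fokker--Planck bounds give $m$ in $L^{2}(0,T;W)\cap L^{\infty}(0,T;L^{2}(\Gamma))$ with $\partial_t m\in L^{2}(0,T;V')$, which is not enough for uniform-in-time compactness without a Simon-type refinement not stated in the paper. This is harmless: run the fixed point in $L^{2}(\Gamma\times(0,T))$ instead (as the paper implicitly does on the $v$ side), where Aubin--Lions applies directly and where $\ccV$ is continuous by assumption~\eqref{eq:20}; continuity of the composed map then follows from Lemma~\ref{lem: stability HJ} (giving strong convergence of $\partial v_n$ in $L^{2}$, hence of $\partial_p H(\cdot,\partial v_n)$ by dominated convergence along subsequences) and Lemma~\ref{lem: stability FK}. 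With that adjustment the argument is complete and matches the paper's in all essentials.
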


\section{\label{subsec: The-linear-parabolic}Preliminary: a  modified heat equation on the
 network with general Kirchhoff conditions
}

This section contains results on the solvability of some linear boundary
value problems with terminal
condition, that will be useful in what follows. Consider
\begin{equation}
\begin{cases}
-\partial_{t}v-\mu_{\alpha}\partial^{2}v=h, & \text{in }\left(\Gamma_{\alpha}\backslash\mathcal{V}\right)\times\left(0,T\right),\alpha\in\mathcal{A},\\
v|_{\Gamma_{\alpha}}\left(\nu_{i},t\right)=v|_{\Gamma_{\beta}}\left(\nu_{i},t\right), &  t\in\left(0,T\right)\alpha,\beta\in\mathcal{A}_{i},\nu_{i}\in\mathcal{V},\\
\ds \sum_{\alpha\in\mathcal{A}_{i}}\gamma_{i\alpha}\mu_{\alpha}\partial_{\alpha}v\left(\nu_{i},t\right)=0, & t\in\left(0,T\right),\nu_{i}\in\mathcal{V},\\
v\left(x,T\right)=v_{T}\left(x\right), & x\in\Gamma,
\end{cases}\label{eq: linear 1}
\end{equation}
where $h\in L^{2}\left(0,T;W'\right)$ and $v_{T}\in L^2(\Gamma)$.

\begin{defn}
If $v_T\in L^2(\Gamma)$ and  $h\in L^{2}\left(0,T; W'\right)$,  
a weak solution of \eqref{eq: linear 1} is a function $v\in L^{2}\left(0,T;V\right)\cap C([0,T]; L^2 (\Gamma))$ such that
$\partial_{t}v\in L^{2}\left(0,T;W'\right)$ and 
\begin{equation}
\begin{cases}
\ds -\left\langle \partial_{t}v\left(t\right), w\right \rangle_{W', W} +\mathscr{B}\left(v\left(\cdot,t\right),w\right)=
\left\langle h(t), w \right \rangle_{W', W} \quad\text{for all }w\in W\text{ and a.e. }t\in (0,T),\\
v\left(x,T\right)=v_{T}(x),
\end{cases}\label{eq: linear 1 weak form}
\end{equation}
where $\mathscr{B}:V\times W\rightarrow\mathbb{R}$ is the bilinear
form defined as follows:
\[
\mathscr{B}\left(v,w\right):=\int_{\Gamma}\mu\partial v\partial wdx=\sum_{\alpha\in\mathcal{A}}\int_{\Gamma_{\alpha}}\mu_{\alpha}\partial v\partial wdx.
\]
\end{defn}

We use the Galerkin's method (see~\cite{Evans2010}), i.e., we construct solutions of some
finite-dimensional approximations to \eqref{eq: linear 1}.

Recall that $\varphi$ has been defined in Definition \ref{def: test function}.
We notice first that the symmetric bilinear form $\widehat \cB( u,v ):= \int_\Gamma \mu \varphi \partial u \partial v$ is 
such that $(u,v)\mapsto  (u,v)_{L^2 (\Gamma)}+ \widehat \cB( u,v )$ is an inner product in $V$ equivalent to the standard inner product
in $V$, namely $(u,v)_{V}= (u,v)_{L^2 (\Gamma)} + \int_\Gamma  \partial u \partial v$. Therefore, by standard Fredholm's theory, there exist 
\begin{itemize}
\item a non decreasing sequence  of nonnegative real numbers $(\lambda_k)_{k=1}^{\infty}$, that tends to $+\infty$ as $k\to \infty$
\item A Hilbert basis $\left( \mathsf{v}_{k}\right) _{k=1} ^{\infty}$  of $L^2(\Gamma)$ , which is also a 
 a total sequence   of $V$ (and orthogonal if $V$ is endowed with the scalar product  $  (u,v)_{L^2 (\Gamma)}+ \widehat \cB( u,v )$),
\end{itemize}
such that 
\begin{equation}
  \label{eq:21}
\widehat \cB( \mathsf{v}_{k} ,v )= \lambda_k  (\mathsf{v}_{k} ,v)_{L^2 (\Gamma)} \quad \hbox{for all } v\in V.
\end{equation}
Note that 
\[
 \int_{\Gamma}\mu\partial\mathsf{v}_{k}\partial\mathsf{v}_{\ell}\varphi dx=\begin{cases}
 \lambda_{k} & \text{if \ensuremath{k=\ell}},\\
 0 & \text{if \ensuremath{k\ne\ell.}}
\end{cases}
 \]
Note also that $\mathsf{v}_{k}$ is a weak solution of
 \begin{equation}
 \begin{cases}
 -\mu_{\alpha}\partial\left(\varphi\partial\mathsf{v}_{k}\right)=\lambda_{k}\mathsf{v}_{k}, & \text{in }\Gamma_{\alpha}\backslash\mathcal{V},\alpha\in\mathcal{A},\\
 \mathsf{v}_{k}|{}_{\Gamma_{\alpha}}\left(\nu_{i}\right)=\mathsf{v}_{k}|{}_{\Gamma_{\beta}}\left(\nu_{i}\right), & \alpha,\beta\in\mathcal{A}_{i},\\
 \ds\sum_{\alpha\in\mathcal{A}}\gamma_{i\alpha}\mu_{\alpha} \partial_{\alpha}\mathsf{v}_{k}\left(\nu_{i}\right)=0, & \nu_{i}\in\mathcal{V},
 \end{cases}\label{eq: eigenvalue problem}
 \end{equation}
which implies that $\mathsf{v}_{k} \in C^2 (\Gamma)$.

Finally, by Remark~\ref{rem: test function}, the sequence $(\varphi \mathsf{v}_{k} )_{k=1}^{\infty}$ is a total family in $W$ (but is not orthogonal if $W$ is endowed with the standard inner product).

\begin{lem}
\label{lem: Galerkin's method}For any positive integer $n$, there
exist $n$ absolutely continuous functions $y_{k}^{n}: [0,T]\to \R$ ,  $k=1,\dots, n$,  
and a function $v_{n}:\left[0,T\right]\to L^2(\Gamma)$ of the form
\begin{equation}
v_{n}\left(x,t\right)=\sum_{k=1}^{n}y_{k}^{n}\left(t\right)\mathsf{v}_{k}(x),\label{eq: formula of v_n}
\end{equation}
such that
\begin{equation}
y_{k}^{n}\left(T\right)=\int_{\Gamma}v_{T}\mathsf{v}_{k}dx,\quad\text{for }k=1,\dots,n,\label{eq: aproximate terminal condtion v_n}
\end{equation}
and 
\begin{equation}
- \frac d {dt } (v_{n},\mathsf{v}_{k}\varphi)_{L^2 ( \Gamma)} + \mathscr{B}\left(v_{n},\mathsf{v}_{k}\varphi\right)=
\left\langle h (t),\mathsf{v}_{k}\varphi \right \rangle,\quad  \hbox {for a.a. }t\in\left(0,T\right), \hbox { for  all } k=1,\dots, n.\label{eq: approximate linear equation}
\end{equation}
\end{lem}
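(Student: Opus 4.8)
The plan is to turn the finite-dimensional relation \eqref{eq: approximate linear equation} into a linear system of ordinary differential equations for the coefficient vector $y^{n}=(y_{1}^{n},\dots,y_{n}^{n})$ and to solve it backward in time. Substituting the ansatz \eqref{eq: formula of v_n} and using bilinearity, the $n$ scalar relations in \eqref{eq: approximate linear equation} read
\[
-\sum_{j=1}^{n}M_{kj}\,\dot y_{j}^{n}(t)+\sum_{j=1}^{n}S_{kj}\,y_{j}^{n}(t)=b_{k}(t),\qquad k=1,\dots,n,
\]
where $M_{kj}=(\mathsf{v}_{j},\mathsf{v}_{k}\varphi)_{L^{2}(\Gamma)}=\int_{\Gamma}\mathsf{v}_{j}\mathsf{v}_{k}\varphi\,dx$, $S_{kj}=\cB(\mathsf{v}_{j},\mathsf{v}_{k}\varphi)$ and $b_{k}(t)=\langle h(t),\mathsf{v}_{k}\varphi\rangle_{W',W}$. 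Here $M$ and $S$ are \emph{constant} $n\times n$ matrices, while only the source vector $b=(b_{1},\dots,b_{n})$ depends on $t$.

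The two points that must be checked before invoking ODE theory are the invertibility of $M$ and the integrability of $b$. For the first, observe that $M$ is exactly the Gram matrix of $\mathsf{v}_{1},\dots,\mathsf{v}_{n}$ for the weighted bilinear form $(u,v)\mapsto\int_{\Gamma}uv\varphi\,dx$; since $\varphi\ge\underline{\varphi}>0$ by Definition~\ref{def: test function}, this form is a genuine scalar product on $L^{2}(\Gamma)$ equivalent to the standard one, and the $\mathsf{v}_{k}$, being members of a Hilbert basis of $L^{2}(\Gamma)$, are linearly independent; hence $M$ is symmetric positive definite and, in particular, invertible. For the second, each $\mathsf{v}_{k}\in C^{2}(\Gamma)\subset V$, so $\mathsf{v}_{k}\varphi\in W$ by Remark~\ref{rem: test function} and is an admissible test function; since $h\in L^{2}(0,T;W')$, the map $t\mapsto b_{k}(t)$ lies in $L^{2}(0,T)\subset L^{1}(0,T)$.

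With $M^{-1}$ at hand the system becomes $\dot y^{n}(t)=M^{-1}\bigl(S\,y^{n}(t)-b(t)\bigr)$, a linear ODE with constant matrix $A:=M^{-1}S$ and forcing $-M^{-1}b\in L^{1}(0,T;\R^{n})$. Prescribing the terminal data $y_{k}^{n}(T)=\int_{\Gamma}v_{T}\mathsf{v}_{k}\,dx$ of \eqref{eq: aproximate terminal condtion v_n}, the Carathéodory existence and uniqueness theorem — equivalently, the explicit variation-of-constants (Duhamel) formula
\[
y^{n}(t)=e^{(t-T)A}\,y^{n}(T)+\int_{t}^{T}e^{(t-s)A}M^{-1}b(s)\,ds
\]
— produces a unique solution $y^{n}$ on $[0,T]$. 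Because the forcing is only $L^{1}$ in time, the integral term is absolutely continuous (but not $C^{1}$), which is precisely why the statement asserts that the $y_{k}^{n}$ are absolutely continuous; setting $v_{n}(x,t)=\sum_{k}y_{k}^{n}(t)\mathsf{v}_{k}(x)$ then gives a function of the required form \eqref{eq: formula of v_n} satisfying \eqref{eq: aproximate terminal condtion v_n} and \eqref{eq: approximate linear equation}. The only genuine verification here is the invertibility of the weighted mass matrix $M$, which hinges on the strict positivity of $\varphi$; everything else is the routine reduction of a Galerkin scheme to a linear Carathéodory system.
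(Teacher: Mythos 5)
Your proof is correct and follows essentially the same route as the paper: reduce \eqref{eq: approximate linear equation} to the linear system $-M\dot y^{n}+Sy^{n}=b$ with terminal data \eqref{eq: aproximate terminal condtion v_n}, establish that the weighted mass matrix $M$ is positive definite (hence invertible) because $\varphi$ is bounded below by a positive constant and the $\mathsf{v}_{k}$ form a Hilbert basis of $L^{2}(\Gamma)$, and conclude by linear ODE theory. Your additional checks (that $b\in L^{2}(0,T)\subset L^{1}(0,T)$ and the explicit Duhamel formula explaining why the solution is merely absolutely continuous) are details the paper leaves implicit, but the argument is the same.
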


\begin{proof}[Proof of Lemma~\ref{lem: Galerkin's method}]
For $n\ge1$, we consider the symmetric $n$ by $n$ matrix $M_{n}$
defined by 
\[
\left(M_{n}\right)_{k\ell}=\int_{\Gamma}\mathsf{v}_{k}\mathsf{v}_{\ell}\varphi dx.
\]
Since $\varphi$ is positive and $ (\mathsf{v}_{k}) _{k=1}^{\infty}$
is a Hilbert basis of $L^{2}\left(\Gamma\right)$, we can check
that $M_n$ is a positive definite matrix and there exist two constants
$c,C$ independent of $n$ such that
\begin{equation}
  c\left|\xi\right|^{2}\le\sum_{k,\ell=1}^{n}\left(M_{n}\right)_{k\ell}\xi_{k}\xi_{\ell}\le C\left|\xi\right|^{2},
  \quad\text{for all }\xi\in\R^n.
\end{equation}
Looking for $v_n$ of the form~\eqref{eq: formula of v_n},
and setting $Y=\left(y_{1}^{n},\ldots,y_{n}^{n}\right)^{T}$,
$\dot{Y}=\left( \frac d {dt} y_{1}^{n},\ldots,\frac d {dt}y_{n}^{n}\right)^T$, \eqref{eq: approximate linear equation}~implies that we have to solve the following
 system of differential equations
\[
-M_n\dot{Y}+B Y=F_{n}, \quad
Y(T)= \left(\int_{\Gamma}{v}_{T}\mathsf{v}_{1},\ldots,\int_{\Gamma}{v}_{T}\mathsf{v}_{n}\right)^T,
\]
where $B_{k\ell}=\mathscr{B}\left(\mathsf{v}_{\ell},\mathsf{v}_{k}\varphi\right)$
and $F_{n}(t)=\left( \langle h(t),\mathsf{v}_{1}\varphi\rangle ,\ldots, \langle h(t),\mathsf{v}_{n}\varphi \rangle \right)^{T}$.
Since the matrix $M_n$ is invertible, the ODE system has a unique absolutely
continuous solution. The lemma is proved.
\end{proof}

We propose to send $n$ to $+\infty$ and  show that a subsequence of
$\left\{ v_{n}\right\} $ converges to a solution of~\eqref{eq: linear 1}.
Hence, we need some uniform estimates for $\left\{ v_{n}\right\} $.

\begin{lem}
\label{lem: Energy estimate}There exists a constant $C$ depending
only on $\Gamma$, $(\mu_\alpha)_{\alpha\in \cA}$, $T$ and $\varphi$ such that 
\[
\left\Vert v_{n}\right\Vert _{L^{\infty}\left(0,T;L^{2}\left(\Gamma\right)\right)}+\left\Vert v_{n}\right\Vert _{L^{2}\left(0,T;V\right)}+\left\Vert \partial_{t}v_{n}\right\Vert _{L^{2}\left(0,T;W'\right)}\le C\left(\left\Vert h\right\Vert
_{L^{2}\left(0,T;W'\right)}
+\left\Vert v_{T}\right\Vert _{L^{2}\left(\Gamma\right)}\right).
\]
\end{lem}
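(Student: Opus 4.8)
The plan is to run the standard Galerkin energy argument, using $v_n$ itself (weighted by $\varphi$) as the test function, and then to estimate $\partial_t v_n$ by duality. First I would derive the basic energy identity: multiplying the $k$-th approximate equation~\eqref{eq: approximate linear equation} by $y_k^n(t)$ and summing over $k=1,\dots,n$ gives, since $v_n=\sum_k y_k^n\mathsf v_k$,
\[
-\tfrac12\tfrac{d}{dt}\int_\Gamma \varphi\, v_n^2\,dx+\cB(v_n,\varphi v_n)=\langle h(t),\varphi v_n\rangle,
\]
where I used $\tfrac{d}{dt}(v_n,\mathsf v_k\varphi)_{L^2}=(\partial_t v_n,\mathsf v_k\varphi)_{L^2}$ and the symmetry of the weighted inner product $(u,w)\mapsto\int_\Gamma \varphi u w$. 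Expanding $\partial(\varphi v_n)=\varphi\partial v_n+v_n\partial\varphi$ splits $\cB(v_n,\varphi v_n)=\widehat\cB(v_n,v_n)+\int_\Gamma \mu\, v_n\partial v_n\,\partial\varphi$. The first piece is coercive, $\widehat\cB(v_n,v_n)\ge\underline\mu\,\underline\varphi\,\|\partial v_n\|_{L^2(\Gamma)}^2$; the second is lower order, bounded (since $\varphi$ is piecewise affine, so $\partial\varphi\in L^\infty$) by $\varepsilon\|\partial v_n\|_{L^2(\Gamma)}^2+C_\varepsilon\|v_n\|_{L^2(\Gamma)}^2$ via Young's inequality. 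On the right-hand side I would use $|\langle h,\varphi v_n\rangle|\le\|h\|_{W'}\|\varphi v_n\|_W\le C\|h\|_{W'}\|v_n\|_V$ (Remark~\ref{rem: test function}), again absorbing $\|\partial v_n\|_{L^2(\Gamma)}$ through Young.

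Next I would integrate this identity backward from $t$ to $T$. The terminal term is controlled by $\|v_n(T)\|_\varphi^2\le\overline\varphi\,\|v_n(T)\|_{L^2(\Gamma)}^2\le\overline\varphi\,\|v_T\|_{L^2(\Gamma)}^2$, because \eqref{eq: aproximate terminal condtion v_n} makes $v_n(T)$ the $L^2(\Gamma)$-orthogonal projection of $v_T$ onto $\mathrm{span}\{\mathsf v_1,\dots,\mathsf v_n\}$, hence a contraction. After choosing $\varepsilon$ small to absorb the gradient terms into the coercive one, I obtain
\[
\underline\varphi\,\|v_n(t)\|_{L^2(\Gamma)}^2+c\int_t^T\|\partial v_n\|_{L^2(\Gamma)}^2\,ds\le C\|v_T\|_{L^2(\Gamma)}^2+C\int_t^T\|v_n\|_{L^2(\Gamma)}^2\,ds+C\|h\|_{L^2(0,T;W')}^2 .
\]
A backward Gronwall inequality then yields the $L^\infty(0,T;L^2(\Gamma))$ bound, and feeding it back gives the $L^2(0,T;V)$ bound.

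It remains to estimate $\partial_t v_n$ in $L^2(0,T;W')$, and this is where the main difficulty lies. Since $\partial_t v_n(t)\in V_n:=\mathrm{span}\{\mathsf v_1,\dots,\mathsf v_n\}$, for a test function $w\in W$ I would split $w=w^1+w^2$ with $w^1\in W_n:=\mathrm{span}\{\varphi\mathsf v_1,\dots,\varphi\mathsf v_n\}$ and $w^2\perp_{L^2(\Gamma)}V_n$; such a (unique) decomposition exists because the weighted Gram matrix $M_n$ of Lemma~\ref{lem: Galerkin's method} is invertible. Then $\int_\Gamma\partial_t v_n\,w^2=0$, so $\langle\partial_t v_n,w\rangle=\int_\Gamma\partial_t v_n\,w^1$, and since $w^1\in W_n$ I can invoke the approximate equation to get $\langle\partial_t v_n,w\rangle=\cB(v_n,w^1)-\langle h,w^1\rangle$, bounded by $(C\|v_n\|_V+\|h\|_{W'})\|w^1\|_W$.

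The crux — the step I expect to be the genuine obstacle — is the uniform (in $n$) bound $\|w^1\|_W\le C\|w\|_W$. Unlike in the classical weight-free Galerkin scheme, the projection $w\mapsto w^1$ is \emph{oblique} (onto $W_n$ along $V_n^{\perp_{L^2}}$, two subspaces that are distinct as soon as $\varphi$ is non-constant), so its boundedness is not automatic from an orthogonality argument: the $L^2$-orthogonal projection onto $V_n$ is $V$-bounded but fails to land in $W_n$, whereas the $\varphi$-weighted projection lands in $W_n$ but its control in the $V$-norm must be earned. I would establish $\|w^1\|_W\le C\|w\|_W$ by exploiting that $(\mathsf v_k)$ is simultaneously orthonormal in $L^2(\Gamma)$ and orthogonal for the weighted form $\widehat\cB$ (so the $L^2$-projection preserves the energy norm), together with the two-sided bounds $\underline\varphi\le\varphi\le\overline\varphi$ and the isomorphism $z\mapsto\varphi z$ from $V$ onto $W$. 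Once this uniform bound is secured, integrating in $t$ closes the estimate with the asserted constant.
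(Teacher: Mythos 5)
Your argument follows the paper's proof essentially step for step: the same weighted energy identity obtained by testing with $v_n\varphi$ (the paper implements your backward Gronwall step by instead multiplying by $e^{\lambda t}$ and choosing $\lambda$ large, which is equivalent), the same use of the terminal condition \eqref{eq: aproximate terminal condtion v_n} to bound $v_n(T)$ by $\|v_T\|_{L^2(\Gamma)}$, and the identical oblique decomposition $W=G_1\oplus G_2$ (with $G_2=\mathrm{span}\{\varphi\mathsf v_1,\dots,\varphi\mathsf v_n\}$ and $G_1$ the $L^2$-annihilator of $\mathrm{span}\{\mathsf v_1,\dots,\mathsf v_n\}$) for the duality estimate on $\partial_t v_n$. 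The uniform bound $\|w^1\|_W\le C\|w\|_W$ that you rightly single out as the crux is simply asserted without proof in the paper, so your sketch of how to earn it --- using that the $\mathsf v_k$ are simultaneously orthogonal for $(\cdot,\cdot)_{L^2(\Gamma)}$ and for $\widehat{\mathscr{B}}$, together with $\underline{\varphi}\le\varphi\le\overline{\varphi}$ --- is, if anything, slightly more explicit than what the paper records.
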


\begin{proof}[Proof of Lemma~\ref{lem: Energy estimate}]
Multiplying \eqref{eq: approximate linear equation} by $y_{k}^{n}\left(t\right)e^{\lambda t}$
for a positive constant $\lambda$ to be chosen later, summing for $k=1,\dots,n$
and using the formula~\eqref{eq: formula of v_n} for $v_{n}$, we get
\[
-\int_{\Gamma}\partial_{t}v_{n}v_{n}e^{\lambda t}\varphi dx+\int_{\Gamma}\mu\partial v_{n}\partial\left(v_{n}e^{\lambda t}\varphi\right)dx=e^{\lambda t} \langle h(t) , v_{n}\varphi \rangle_{W',W},
\]
and 
\begin{align*}
-\int_{\Gamma}\left[\partial_{t}\left(\dfrac{v_{n}^{2}}{2}e^{\lambda t}\right)-\dfrac{\lambda}{2}v_{n}^{2}e^{\lambda t}\right]\varphi dx+\int_{\Gamma}\mu\left(\partial v_{n}\right)^{2}e^{\lambda t}\varphi dx+\int_{\Gamma}\mu\partial v_{n}v_{n}e^{\lambda t}\partial\varphi dx=e^{\lambda t} \langle h(t) , v_{n}\varphi \rangle .
\end{align*}
Integrating both sides from $s$ to $T$, we obtain
\begin{eqnarray}
 && \int_{\Gamma}\left(\dfrac{v_{n}^{2}\left(x,s\right)}{2}e^{\lambda s}
-\dfrac{v_{n}^{2}\left(x,T\right)}{2}e^{\lambda T}\right)\varphi dx
+\dfrac{\lambda}{2}\int_{s}^{T}\int_{\Gamma}v_{n}^{2}e^{\lambda t}\varphi dxdt \nonumber \\
 && +\int_{s}^{T}\int_{\Gamma}\mu(\partial v_{n})^{2}e^{\lambda t}\varphi dxdt+
\int_{s}^{T}\int_{\Gamma}\mu\partial v_{n}v_{n}e^{\lambda t}\partial\varphi dxdt\nonumber \\
&= & \int_{s}^{T} e^{\lambda t} \langle h(t) , v_{n}(t)\varphi \rangle dt\nonumber \\
&\le & C \int_{s}^{T} e^{\lambda t} \|h(t)\|_{W'}  \|v_{n}(t)\|_{V} dt\nonumber \\
& \le & 
\frac 1 2 \int_{s}^{T}\int_{\Gamma}\mu \left( (\partial v_{n})^{2} + v_{n}^{2} \right) e^{\lambda t}\varphi dxdt
+ \frac {C^2}{2 \underline{\mu}}   \int_{s}^{T}e^{\lambda t} \|h(t)\|^2_{W'}dt,  \nonumber 
\end{eqnarray}
 where $C$ is positive constant depending on $\varphi$, because of Remark \ref{rem: test function}. Therefore,
\begin{eqnarray*}
 && e^{\lambda s} \int_{\Gamma} \dfrac{v_{n}^{2}\left(x,s\right)}{2} \varphi dx
+ \frac {1} 4 \int_{s}^{T}\int_{\Gamma}\mu(\partial v_{n})^{2}\varphi e^{\lambda t} dxdt
+\left( \frac {\lambda} 2 - \frac { \overline {\mu}} 2 -  \overline {\mu} \frac {\|\partial \varphi \|^2_{L^\infty(\Gamma)}}{\underline \varphi^2}   \right) 
\int_{s}^{T}\int_{\Gamma}v_{n}^{2}e^{\lambda t}\varphi dxdt\nonumber \\
 &\le & 
e^{\lambda T}\int_{\Gamma} \dfrac{v_{n}^{2}\left(x,T\right)}{2}\varphi dx
+ \frac {C^2}{2 \underline{\mu}} e^{\lambda T}  \int_{s}^{T} \|h(t)\|^2_{W'}dt.
\end{eqnarray*}
Choosing $\lambda \ge 1/2+ \overline{\mu} + 2\overline{\mu}||\partial \varphi||^2_{L^\infty(\Gamma)}/\underline{\varphi}^2$
and noticing that $\int_{\Gamma}v_{n}^{2}\left(x,T\right)\varphi dx$ is bounded
by $\overline{\varphi}\int_{\Gamma}v_{T}^{2}dx$ from~\eqref{eq: aproximate terminal condtion v_n}, it follows  that
\begin{eqnarray}
  && \int_{\Gamma}v_{n}^{2}(x,s)\varphi dx + \int_{s}^{T}\int_{\Gamma}v_{n}^{2}\varphi dxdt
  +  \int_{s}^{T}\int_{\Gamma}\mu (\partial v_{n})^{2}\varphi dxdt\nonumber\\
  &\leq&
2
  e^{\lambda T}\left( 
  \frac {C^2} {\underline \mu} \Vert h \Vert _{L^{2}(0,T;W')}^{2}  +\overline{\varphi} \int_{\Gamma}v_{T}^{2}dx \right) . 
\label{ineq:useful-formula}
\end{eqnarray}

\emph{Estimate of $v_{n}$ in $L^{\infty}\left(0,T;L^{2}\left(\Gamma\right)\right)$ and $L^{2}\left(0,T;V\right)$}.
From~\eqref{ineq:useful-formula}, it is straightforward to see that
\begin{eqnarray}\label{ineq:042}
  && \left\Vert v_{n}\right\Vert _{L^{\infty}\left(0,T;L^{2}\left(\Gamma\right)\right)}
  +\left\Vert v_{n}\right\Vert _{L^{2}\left(0,T;V\right)}
\leq C\left(\left\Vert h\right\Vert _{L^{2}(0,T;W')}+\left\Vert v_{T}\right\Vert _{L^{2}\left(\Gamma\right)}\right)
\end{eqnarray}
for some constant $C$ depending only on $(\mu_\alpha)_{\alpha\in \cA}$, $\varphi$ and $T$.

\emph{Estimate $\partial_{t}v_{n}$ in $L^{2}\left(0,T;W'\right)$}.
Consider the closed subspace $G_1$ of $W$ defined by
$G_1=\left\{ w\in W:\int_{\Gamma}\mathsf{v}_{k}wdx=0\text{ for all }k\le n\right\}$. It has a finite co-dimension equal to $n$. Consider also the $n$-dimensional subspace $G_2=\text{span}\left\{ \mathsf{v}_{1}\varphi,\ldots,\mathsf{v}_{n}\varphi\right\} $ of $W$.  The invertibility of the matrix $M_n$ introduced in the proof of Lemma \ref{lem: Galerkin's method} implies that $G_1\cap G_2=\{0\}$. This implies that $W= G_1\oplus G_2$.
For $w\in W$, we can write $w$ of the form $w=w_{n}+\hat{w}_{n}$,
where $w_{n}\in  G_2$ and  $\hat{w}_{n} \in G_1$.
Hence, for a.e. $t\in\left[0,T\right]$, from \eqref{eq: formula of v_n}
and \eqref{eq: approximate linear equation}, one gets
\begin{equation}
\left\langle \partial_ t v_{n}(t),w\right\rangle _{W',W} =
\frac {d} {dt} \left(\int_{\Gamma}v_{n}wdx\right)= \frac {d} {dt} \left(\int_{\Gamma} v_{n}w_{n}dx\right) 
=-\langle h(t),w_{n}\rangle_{W',W} +\int_{\Gamma}\mu\partial v_{n}\partial w_{n}dx.\label{eq: estimate v_t in W'}
\end{equation}
Since there exists a constant $C$ independent of $n$ such that $\left\Vert w_{n}\right\Vert _{W}\le C\left\Vert w\right\Vert _{W}$,
it follows that 
\[
\left\Vert \partial_{t}v_{n}\left(t\right)\right\Vert _{W'}\le C\left(\left\Vert h\left(t\right)\right\Vert _{
W'}+\overline{\mu}\left\Vert v_{n}\left(t\right)\right\Vert _{V}\right),
\]
for almost every $t$, and therefore, from~\eqref{ineq:042}, we obtain
\begin{displaymath}
\left\Vert \partial_{t}v_{n}\left(t\right)\right\Vert_{L^{2}\left(0,T;W'\right)}^{2}  
 \le C\left(\left\Vert h\right\Vert _{L^{2}\left(0,T;W'\right)}^{2}+\left\Vert v_{T}\right\Vert _{L^{2}\left(\Gamma\right)}^{2}\right),  
\end{displaymath}
for a constant $C$ independent of  $n$.
\end{proof}

\begin{thm}
\label{thm: existence and uniqueness linear equation}There exists
a unique solution $v$ of \eqref{eq: linear 1}, which satisfies
\begin{equation}
\left\Vert v\right\Vert _{L^{\infty}\left(0,T;L^2(\Gamma)\right)}
+\left\Vert v\right\Vert _{L^{2}\left(0,T;V\right)}+\left\Vert \partial_{t}v\right\Vert _{L^{2}\left(0,T;W'\right)}\le C\left(\left\Vert h\right\Vert _{L^{2}\left(0,T; W'\right)}+\left\Vert v_{T}\right\Vert _{L^{2}\left(\Gamma\right)}\right),\label{eq: energy estimate for v}
\end{equation}
where $C$ is a constant that depends only on $\Gamma$, $(\mu_\alpha)_{\alpha\in \cA}$, $T$ and $\varphi$. 
\end{thm}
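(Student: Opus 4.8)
The plan is to obtain existence by passing to the limit in the Galerkin scheme built in Lemmas~\ref{lem: Galerkin's method} and~\ref{lem: Energy estimate}, and to obtain uniqueness by a direct energy argument exploiting the isomorphism $v\mapsto v\varphi$ from $V$ onto $W$ of Remark~\ref{rem: test function}. First I would invoke the uniform bounds of Lemma~\ref{lem: Energy estimate}: since $\{v_n\}$ is bounded in $L^2(0,T;V)\cap L^\infty(0,T;L^2(\Gamma))$ and $\{\partial_t v_n\}$ is bounded in $L^2(0,T;W')$, the Banach--Alaoglu theorem yields a subsequence (not relabeled) and a limit $v$ with $v_n\rightharpoonup v$ weakly in $L^2(0,T;V)$, $v_n\overset{*}{\rightharpoonup} v$ weakly-$*$ in $L^\infty(0,T;L^2(\Gamma))$, and $\partial_t v_n\rightharpoonup \partial_t v$ weakly in $L^2(0,T;W')$; the three limits are mutually consistent because they agree as distributions on $\Gamma\times(0,T)$.

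Next I would pass to the limit in~\eqref{eq: approximate linear equation}. Fix $k$ and $\chi\in C^1([0,T])$; for $n\ge k$, multiply~\eqref{eq: approximate linear equation} by $\chi(t)$, integrate over $(0,T)$ and integrate the time-derivative term by parts. Because $(\mathsf{v}_k)$ is a Hilbert basis of $L^2(\Gamma)$, the terminal data~\eqref{eq: aproximate terminal condtion v_n} mean that $v_n(\cdot,T)$ is exactly the $L^2$-orthogonal projection of $v_T$ onto $\mathrm{span}\{\mathsf{v}_1,\dots,\mathsf{v}_n\}$, hence $v_n(\cdot,T)\to v_T$ strongly in $L^2(\Gamma)$. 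Every remaining term is linear and continuous for the relevant weak topology, so letting $n\to\infty$ gives the integrated identity for the limit $v$ tested against $\mathsf{v}_k\varphi$. Taking first $\chi\in C^\infty_c(0,T)$ and using that $(\varphi\mathsf{v}_k)_{k\ge1}$ is a total family in $W$, a density argument yields the weak equation of~\eqref{eq: linear 1 weak form} for all $w\in W$ and a.e.\ $t$. The continuity $v\in C([0,T];L^2(\Gamma))$ then follows from $v\in L^2(0,T;V)$, $\partial_t v\in L^2(0,T;W')$ and the space--time embedding results recalled in the appendix; choosing finally $\chi(T)\ne0$, $\chi(0)=0$ and comparing the boundary term at $t=T$ with the strong limit $v_n(\cdot,T)\to v_T$ identifies $v(\cdot,T)=v_T$. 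The energy estimate~\eqref{eq: energy estimate for v} is then immediate, since each norm on its left-hand side is weakly (or weakly-$*$) lower semicontinuous, so the bound of Lemma~\ref{lem: Energy estimate} survives in the limit with the same constant $C$, depending only on $\Gamma$, $(\mu_\alpha)_{\alpha\in\cA}$, $T$ and $\varphi$.

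For uniqueness, by linearity it suffices to show that a weak solution with $h=0$ and $v_T=0$ vanishes. The natural choice of test function is $w=v(\cdot,t)\varphi\in W$, legitimate by Remark~\ref{rem: test function}. The key identity is
\[
\langle \partial_t v(t),v(t)\varphi\rangle_{W',W}=\tfrac12\,\frac{d}{dt}\int_\Gamma v^2(x,t)\varphi\,dx,
\]
valid for $v\in L^2(0,T;V)$ with $\partial_t v\in L^2(0,T;W')$. Inserting $w=v\varphi$ into the weak formulation and integrating from $s$ to $T$ reproduces verbatim the computation of Lemma~\ref{lem: Energy estimate} with $h\equiv0$ and $v_T\equiv0$: the term $\int_\Gamma\mu\,v\,\partial v\,\partial\varphi\,dx$ is absorbed by Young's inequality into $\int_\Gamma\mu\varphi(\partial v)^2\,dx$ at the cost of a multiple of $\int_\Gamma v^2\varphi\,dx$, and the resulting differential inequality forces $\int_\Gamma v^2(s)\varphi\,dx=0$ for every $s$, hence $v\equiv0$.

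The main obstacle throughout is the asymmetry of the $V$--$W$ pairing. The standard parabolic theory operates with a symmetric Gelfand triple $V\hookrightarrow H\hookrightarrow V'$, whereas here the bilinear form $\mathscr{B}$ couples the two \emph{distinct} spaces $V$ and $W$, and the duality $\langle\cdot,\cdot\rangle_{W',W}$ pairs $W'$ with $W$. Both the identification of the weak limit with the terminal datum and the uniqueness argument hinge on the weighted integration-by-parts-in-time formula above, which is what converts the $W'$--$W$ duality into a derivative of the $\varphi$-weighted $L^2$ norm; this formula must be established in the non-symmetric functional setting (for instance via the appendix embeddings applied to the pairing, after factoring through the isomorphism $v\mapsto v\varphi$) rather than quoted directly from the classical literature.
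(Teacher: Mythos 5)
Your proposal is correct and follows essentially the same route as the paper: Galerkin approximation, passage to the limit tested against the functions $\mathsf{v}_k\varphi$, a density argument based on the totality of $(\varphi\mathsf{v}_k)_k$ in $W$, identification of the terminal datum via integration by parts in time with test functions vanishing at $t=0$, and uniqueness from the energy identity obtained with the test function $v e^{\lambda t}\varphi$. The only minor deviations are that you recover the estimate \eqref{eq: energy estimate for v} by weak lower semicontinuity of the norms along the Galerkin sequence (the paper instead re-tests the limit equation with $v e^{\lambda t}\varphi$), and that the continuity $v\in C([0,T];L^2(\Gamma))$ together with the weighted integration-by-parts-in-time formula is justified in the paper by citing Lions--Magenes rather than the appendix embeddings, which by themselves do not yield it.
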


\begin{proof}[Proof of  Theorem~\ref{thm: existence and uniqueness linear equation}]
From Lemma~\ref{lem: Energy estimate}, the sequence $\left( v_{n}\right) _{n\in\mathbb{N}}$
is bounded in $L^{2}\left(0,T;V\right)$ and the sequence $\left( \partial_{t}v_{n}\right) _{n\in\mathbb{N}}$
is bounded in $L^{2}(0,T;W')$. Hence, up to the extraction of a subsequence,
there exists a function $v$ such that $v\in L^{2}\left(0,T;V\right),\;\partial_{t}v\in L^{2}\left(0,T;W'\right)$
and
\begin{equation}
\left\{
\begin{array}{ll}
v_{n} \rightharpoonup v & \text{ weakly in }L^{2}\left(0,T;V\right),\\
\partial_{t}v_{n} \rightharpoonup\partial_{t}v & \text{ weakly in }L^{2}\left(0,T;W'\right).
\end{array}\label{eq: weakly convergence for v_n}
\right.
\end{equation}
Fix an integer $N$ and choose a function $\overline{v}\in C^{1}\left(\left[0,T\right];V\right)$
having the form
\begin{equation}
\overline{v}\left(t\right)=\sum_{k=1}^{N}d_{k}\left(t\right)\mathsf{v}_{k},\label{eq: formula v_bar}
\end{equation}
where $d_{1},\ldots,d_{N}$ are given  real valued $C^1$ functions defined in $[0,T]$. For all $n\ge N$,
multiplying \eqref{eq: approximate linear equation} by $d_{k}\left(t\right)$,
summing for $k=1,\dots,n$ and integrating over $\left(0,T\right)$ leads to
\begin{equation}
-\int_{0}^{T}\int_{\Gamma}\partial_{t}v_{n}\overline{v}\varphi dxdt+\int_{0}^{T}\int_{\Gamma}\mu\partial v_{n}\partial\left(\overline{v}\varphi\right)dxdt=\int_{0}^{T} \langle h,\overline{v}\varphi \rangle dt.\label{eq: v_n and v_bar}
\end{equation}
Letting $n\rightarrow+\infty$,
we obtain from   \eqref{eq: weakly convergence for v_n} that
\begin{equation}
  -\int_{0}^{T}\langle \partial_{t}v,\overline{v}\varphi \rangle dt+\int_{0}^{T}\int_{\Gamma}\mu\partial v\partial\left(\overline{v}\varphi\right)dxdt=\int_{0}^{T}\langle h,\overline{v}\varphi \rangle dt.
\label{eq: v and v_bar}
\end{equation}
Since the functions of the form \eqref{eq: formula v_bar}
are dense in $L^{2}\left(0,T;V\right)$,
(\ref{eq: v and v_bar}) holds for all test function $\overline{v}\in L^{2}\left(0,T;V\right)$.
Recalling the isomorphism $\overline{v}\in V\mapsto \overline{v}\varphi\in W$ (see Remark~\ref{rem: test function}),
we obtain that, for all $w\in W$ and $\psi\in C_{c}^{1}\left(0,T\right)$,
\[
-\int_{0}^{T}\langle \partial_{t}v, w\rangle \psi dt+\int_{0}^{T}\int_{\Gamma}\mu\partial v\partial w\psi dxdt=\int_{0}^{T}
\langle h,w \rangle \psi dt.
\]
This implies that, for a.e. $t\in\left(0,T\right)$,
\[
-\langle \partial_{t}v, w \rangle +\mathscr{B}\left(v,w\right)=\langle h,w \rangle\quad\text{for all }w\in W.
\]

Using \cite[Theorem 3.1]{lm68} (or  the same argument as in \cite[pages 287-288]{Evans2010}), 
we see that  $v\in C([0,T]; L^2_\varphi(\Gamma))$, where
$L^2_\varphi(\Gamma)= \{w:\Gamma\to \R\;:\;\int_\Gamma w^2 \varphi dx <+\infty\}$, and since $\varphi$ is bounded from below and from above by positive numbers,
$L^2_\varphi(\Gamma)=L^2(\Gamma)$ with equivalent norms. Moreover, 
\begin{displaymath}
  \max_{0\le t\le T} \|v(\cdot,t)\|_{L^2(\Gamma)}\le C \left(\|\partial_t v\|_{L^2(0,T;W')}+\|v\|_{L^2(0,T;V)}\right).
\end{displaymath}
 We are now going to prove $v\left(T\right)=v_{T}$.
 For all $\overline{v}\in C^{1}\left(\left[0,T\right];V\right)$ of the form 
(\ref{eq: formula v_bar}) and such that $\overline{v}\left(0\right)=0$, we deduce from \eqref{eq: v_n and v_bar} and \eqref{eq: v and v_bar} that
\begin{align*}
 & -\int_{0}^{T}\int_{\Gamma}\partial_{t}\overline{v}v_{n}\varphi dxdt-\int_{\Gamma}\overline{v}\left(T\right)v_{n}\left(T\right)\varphi dx+\int_{0}^{T}\int_{\Gamma}\mu\partial v_{n}\partial\left(\overline{v}\varphi\right)dxdt\\
= & -\int_{0}^{T}\int_{\Gamma}\partial_{t}\overline{v}v\varphi dxdt-\int_{\Gamma}\overline{v}\left(T\right)v\left(T\right)\varphi dx+\int_{0}^{T}\int_{\Gamma}\mu\partial v\partial\left(\overline{v}\varphi\right)dxdt.
\end{align*}
We know that $v_{n}\left(T\right)\rightarrow v_{T}$ in $L^{2}\left(\Gamma\right)$. Then, 
using \eqref{eq: weakly convergence for v_n}, we obtain 
\[
\int_{\Gamma}\overline{v}\left(T\right)v_{T}\varphi dx=\int_{\Gamma}\overline{v}\left(T\right)v\left(T\right)\varphi dx.
\]
Since the functions of the form $\sum_{k=1}^{N}d_{k}\left(T\right)\mathsf{v}_{k}$ are dense in $L^2(\Gamma)$,
 we conclude that $v\left(T\right)=v_{T}$. 

In order to prove the energy estimate \eqref{eq: energy estimate for v},
we use $ve^{\lambda t}\varphi$
as a test function in (\ref{eq: linear 1 weak form}) and apply  similar arguments as in the proof of Lemma \ref{lem: Energy estimate}
for $\lambda$ large enough, we get~\eqref{eq: energy estimate for v}.

Finally, if $h= 0$ and $v_T=0$, by the energy estimate for $v$ in
\eqref{eq: energy estimate for v}, we deduce that $v= 0$. Uniqueness
is proved.
\end{proof}

\begin{thm}
\label{thm: regularity for v} If $v_{T}\in V$ and $h\in L^{2}\left( \Gamma\times (0,T)\right)$,
then the unique solution $v$ of \eqref{eq: linear 1} satisfies $v\in L^{2}\left(0,T;H^{2}\left(\Gamma\right)\right)\cap
C([0,T];V)$
and $\partial_{t}v\in L^{2}\left(0,T;L^{2}\left(\Gamma\right)\right)$. Moreover,
\begin{eqnarray}\label{meilleure-esti-v}
\left\Vert v\right\Vert _{L^{\infty}\left(0,T;V\right)}+\left\Vert v\right\Vert _{L^{2}\left(0,T;H^{2}\left(\Gamma\right)\right)}+\left\Vert \partial_{t}v\right\Vert _{L^{2}\left(0,T;L^{2}\left(\Gamma\right)\right)}\le C\left(\left\Vert h\right\Vert _{L^{2}\left(0,T;L^{2}\left(\Gamma\right)\right)}+\left\Vert v_{T}\right\Vert _{V}\right),
\end{eqnarray}
for a positive constant $C$ that depends only on $\Gamma$, $(\mu_\alpha)_{\alpha\in \cA}$, $T$ and $\varphi$. 
\end{thm}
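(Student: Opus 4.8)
The plan is to return to the Galerkin approximations $v_n$ built in Lemma~\ref{lem: Galerkin's method}, derive a \emph{second}, higher-order energy estimate by testing with the time derivative, pass to the limit, and finally read off the $H^2$-regularity edgewise from the equation itself. First I would multiply \eqref{eq: approximate linear equation} by $\frac{d}{dt}y_k^n(t)$ and sum over $k=1,\dots,n$. Since $\sum_k \frac{d}{dt}y_k^n\,\mathsf{v}_k=\partial_t v_n$, the first term produces $-\int_\Gamma(\partial_t v_n)^2\varphi\,dx$, the right-hand side produces $\int_\Gamma h\,\partial_t v_n\,\varphi\,dx$, and the key term splits, using $\partial(\partial_t v_n\,\varphi)=\varphi\,\partial(\partial_t v_n)+\partial_t v_n\,\partial\varphi$, as
\[
\mathscr{B}(v_n,\partial_t v_n\,\varphi)=\frac12\frac{d}{dt}\int_\Gamma\mu\,\varphi\,(\partial v_n)^2\,dx+\int_\Gamma\mu\,\partial v_n\,\partial_t v_n\,\partial\varphi\,dx.
\]
Integrating in time from $s$ to $T$ and using that $\varphi$ is bounded above and below by positive constants while $|\partial\varphi|$ is bounded, I would absorb both the cross term and the forcing term into $\tfrac12\int(\partial_t v_n)^2\varphi$ by Young's inequality, controlling the leftover $\int_s^T\int(\partial v_n)^2$ via the first energy estimate \eqref{ineq:042}.

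The terminal contribution at $t=T$ is $\int_\Gamma\mu\,\varphi\,(\partial v_n(\cdot,T))^2$, which I must bound by $C\|v_T\|_V^2$. The crucial observation here is that, by \eqref{eq: aproximate terminal condtion v_n}, $v_n(\cdot,T)$ is the $L^2$-orthogonal projection of $v_T$ onto $\mathrm{span}\{\mathsf{v}_1,\dots,\mathsf{v}_n\}$; because the eigenfunction relation \eqref{eq:21} gives $\widehat{\cB}(v_T,\mathsf{v}_k)=\lambda_k(v_T,\mathsf{v}_k)_{L^2}$, this $L^2$-projection \emph{coincides} with the orthogonal projection for the equivalent inner product $(\cdot,\cdot)_{L^2}+\widehat{\cB}(\cdot,\cdot)$ on $V$. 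Hence $v_n(\cdot,T)\to v_T$ in $V$, so the terminal term is bounded uniformly in $n$. Combined with the $L^\infty(0,T;L^2)$ bound already established, this yields uniform bounds for $\partial_t v_n$ in $L^2(0,T;L^2(\Gamma))$ and for $v_n$ in $L^\infty(0,T;V)$.

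Next, up to a subsequence, $\partial_t v_n\rightharpoonup\partial_t v$ weakly in $L^2(0,T;L^2(\Gamma))$ and $v_n\overset{*}{\rightharpoonup}v$ weakly-$*$ in $L^\infty(0,T;V)$; by uniqueness in Theorem~\ref{thm: existence and uniqueness linear equation} the limit is the solution in question, and weak lower semicontinuity transfers the bounds to $v$. With $\partial_t v,\,h\in L^2(0,T;L^2(\Gamma))$ in hand, I read the $H^2$-regularity directly from the equation: on each edge $\mu_\alpha\partial^2 v=-\partial_t v-h$, so $\partial^2 v(\cdot,t)\in L^2$ edgewise for a.e. $t$ with $\|\partial^2 v\|_{L^2(0,T;L^2)}\le\underline\mu^{-1}\bigl(\|\partial_t v\|_{L^2(0,T;L^2)}+\|h\|_{L^2(0,T;L^2)}\bigr)$. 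Since moreover $v(\cdot,t)\in V=C(\Gamma)\cap H^1_b(\Gamma)$ already satisfies the continuity and Kirchhoff conditions, we get $v(\cdot,t)\in H^2(\Gamma)$ and hence $v\in L^2(0,T;H^2(\Gamma))$. Finally, having $v\in L^2(0,T;H^2(\Gamma))$ together with $\partial_t v\in L^2(0,T;L^2(\Gamma))$, the trace/interpolation theorem \cite[Theorem 3.1]{lm68} (as already invoked for $C([0,T];L^2)$) gives $v\in C([0,T];V)$, and collecting the three bounds yields \eqref{meilleure-esti-v}.

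I expect the improved energy estimate of the first two paragraphs to be the crux. The weight $\varphi$ is unavoidable, since it is precisely what renders the eigenbasis $(\mathsf{v}_k)$ orthogonal, and the two delicate points are the control of the cross term $\int\mu\,\partial v_n\,\partial_t v_n\,\partial\varphi$ and the identification of the terminal datum as a limit in $V$ (not merely in $L^2$). Once $\partial_t v\in L^2(0,T;L^2(\Gamma))$ is secured, the $H^2$ and time-continuity conclusions are comparatively routine, because on a network the general Kirchhoff conditions enter only through the already-established membership $v(\cdot,t)\in V$ and do not obstruct the edgewise bound on the second derivative.
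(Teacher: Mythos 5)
Your proposal is correct and follows essentially the same route as the paper: testing the Galerkin system with $\frac{d}{dt}y_k^n$, bounding the terminal term $\int_\Gamma\mu\varphi(\partial v_n(\cdot,T))^2\,dx$ via the spectral relation \eqref{eq:21} (your projection argument is exactly the Bessel inequality $\sum_{k\le n}\lambda_k(v_T,\mathsf{v}_k)_{L^2}^2\le\widehat{\mathscr B}(v_T,v_T)$ used in the paper), passing to weak limits, recovering $\partial^2 v$ edgewise from the equation, and invoking \cite[Theorem 3.1]{lm68} for $C([0,T];V)$. The only cosmetic difference is that the paper absorbs the lower-order terms with the exponential weight $e^{\lambda t}$ rather than by appealing to the first energy estimate \eqref{ineq:042}; both are valid.
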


\begin{proof}[Proof of  Theorem~\ref{thm: regularity for v}]
It is enough  to prove estimate~\eqref{meilleure-esti-v} for $v_{n}$.

Multiplying \eqref{eq: approximate linear equation}
by $-\frac d {dt} y_{k}^{n}$, summing for $k=1,\dots,n$
and using \eqref{eq: formula of v_n}  leads to
\[
\int_{\Gamma}(\partial_{t}v_{n})^{2}\varphi dx-\int_{\Gamma}\mu  \partial v_{n}\partial\left(\partial_{t}v_{n}\varphi\right)dx=- \int_{\Gamma} h\partial_{t}v_{n}\varphi dx,
\]
hence
\begin{align*}
\int_{\Gamma} (\partial_{t}v_{n})^{2}\varphi dx-\int_{\Gamma}\mu  \partial_{t}\dfrac{(\partial v_{n})^{2}}{2}\varphi dx
-\int_{\Gamma}\mu  \partial v_{n}\partial_{t}v_{n}\partial\varphi dx=-\int_{\Gamma}  h\partial_{t}v_{n}\varphi dx.
\end{align*}
Multiplying by $e^{\lambda t}$ where $\lambda$ will chosen later, and taking the integral from $s$ to $T$, we obtain
\begin{eqnarray}
 && \int_{s}^{T}\int_{\Gamma}(\partial_{t}v_{n})^{2}e^{\lambda t}\varphi dxdt-\int_{\Gamma}\dfrac{\mu}{2}\left[(\partial v_{n}\left(T\right))^{2}e^{\lambda T}-(\partial v_{n}\left(s\right))^{2}e^{\lambda s}\right]\varphi dx\nonumber \\
 && +\lambda\int_{s}^{T}\int_{\Gamma}\dfrac{\mu}{2} (\partial v_{n})^{2} e^{\lambda t}\varphi dxdt-\int_{s}^{T}\int_{\Gamma}\mu\partial v_{n}\partial_{t}v_{n}e^{\lambda t}\partial\varphi dxdt\nonumber \\
&= & -\int_{s}^{T}\int_{\Gamma}h\partial_{t}v_{n}e^{\lambda t}\varphi dxdt\nonumber \\
&\le & \dfrac{1}{2}\int_{s}^{T}\int_{\Gamma}h^{2}e^{\lambda t}\varphi dxdt+\dfrac{1}{2}\int_{s}^{T}\int_{\Gamma}(\partial_{t}v_{n})^{2}e^{\lambda t}\varphi dxdt.\label{eq: estimate dt v_n}
\end{eqnarray}
Let us deal with the term $\int_{\Gamma}(\partial v_{n}\left(x,T\right))^{2}\varphi dx$. From \eqref{eq: aproximate terminal condtion v_n},
\begin{displaymath}
  \begin{split}
  \int_{\Gamma} \mu(\partial v_{n}\left(x,T\right))^{2}\varphi dx
&= \sum_{k=1}^n \lambda_k \left(\int_{\Gamma}  v_T \mathsf{v}_{k} dx\right)^2 \\
 &\le    \sum_{k=1}^\infty \lambda_k \left(\int_{\Gamma}  v_T \mathsf{v}_{k} dx\right)^2 =  \int_{\Gamma} \mu(\partial v_T\left(x\right))^{2}\varphi dx \\ & \le \overline{\mu} \int_{\Gamma} (\partial v_T\left(x\right))^{2}\varphi dx.
  \end{split}
\end{displaymath}
Then, choosing $\lambda =2\overline{\mu}^2||\partial \varphi||_{L^\infty(\Gamma)}^2/(\underline{\varphi}^2\underline{\mu})$, we obtain that 
\begin{equation}
\int_{\Gamma}2\mu(\partial v_{n}(x,s))^{2}\varphi dx + \int_{s}^{T}\int_{\Gamma}(\partial_t v_{n})^{2}\varphi dxdt\leq
  2e^{\lambda T}\overline{\varphi}\left( \left\Vert h\right\Vert _{L^{2}\left(  \Gamma \times(0,T)\right)}^{2}
  + \overline \mu \int_{\Gamma}(\partial v_{T})^{2}dx\right). 
\label{ineq:useful-formula-1}
\end{equation}

\emph{Estimate of $\partial v_{n}$ in $L^{\infty}\left(0,T;L^{2}\left(\Gamma\right)\right)$
and $\partial_{t}v_{n}$ in $L^{2}\left(  \Gamma \times(0,T)\right)$}.
From~\eqref{ineq:useful-formula-1}, it is straightforward to see that
\begin{eqnarray*}
  && \left\Vert \partial v_{n}\right\Vert _{L^{\infty}\left(0,T;L^{2}\left(\Gamma\right)\right)}
  +\left\Vert \partial_t v_{n}\right\Vert _{L^{2}\left(  \Gamma \times(0,T)\right)}
\leq C\left(\left\Vert h\right\Vert _{L^{2}\left(  \Gamma \times(0,T)\right)}+\left\Vert\partial  v_{T}\right\Vert _{L^{2}\left(\Gamma\right)}\right)
\end{eqnarray*}
for some constant $C$ depending only on $\Gamma$, $\mu$, $T$ and $\varphi$.

\emph{Estimate of $\partial^2 v_{n}$ in $L^{2}\left(  \Gamma \times(0,T)\right)$}.
Finally, using the  PDE in \eqref{eq: linear 1},
we can see that $\partial^{2}v_{n}$ belongs to $L^{2}\left(  \Gamma \times(0,T)\right)$
and is bounded by $C\left(\left\Vert h\right\Vert _{L^{2}\left(  \Gamma \times(0,T)\right)}+\left\Vert v_{T}\right\Vert _{V}\right)$,
hence $v_n$ is bounded in $L^{2}\left(0,T;H^{2}\left(\Gamma\right)\right)$ by the same quantity.
The Kirchhoff conditions (which boil down to Neumann conditions at $\partial \Gamma$) are therefore satisfied in a strong sense for almost all $t$.

Using \cite[Theorem 3.1]{lm68} (or a similar argument as \cite[pages 287-288]{Evans2010}), we see that  $v$ in $C([0,T]; V )$. 

\end{proof}

\section{The Fokker-Planck equation}
\label{sec:fokk-planck-equat}
This paragraph is devoted to a boundary value problem including
a Fokker-Planck equation
\begin{equation}
\begin{cases}
\partial_{t}m-\mu_{\alpha}\partial^{2}m-\partial\left(bm\right)=0, & \text{in }\left(\Gamma_{\alpha}\backslash\mathcal{V}\right)\times\left(0,T\right),~\alpha\in\mathcal{A},\\
\dfrac{m|_{\Gamma_{\alpha}}\left(\nu_{i},t\right)}{\gamma_{i\alpha}}=\dfrac{m|_{\Gamma_{\beta}}\left(\nu_{i},t\right)}{\gamma_{i\beta}}, &  t\in\left(0,T\right),~\alpha,\beta\in\mathcal{A}_i,~ \nu_{i}\in\mathcal{V}\backslash\partial\Gamma,\\
\displaystyle{\sum_{\alpha\in\mathcal{A}_{i}}\mu_{\alpha}\partial_{\alpha}m\left(\nu_{i},t\right)
  + n_{i\alpha}b\left(\nu_{i},t\right)m|_{\Gamma_{\alpha}}\left(\nu_{i},t\right)=0,} & t\in\left(0,T\right),~\nu_{i}\in\mathcal{V},\\
m\left(x,0\right)=m_{0}\left(x\right), & x\in\Gamma,
\end{cases}\label{eq: Fokker-Planck}
\end{equation}
where $b\in PC\left(\Gamma\times\left[0,T\right]\right)$
and $m_{0}\in L^2(\Gamma)$.
\begin{defn}
A weak solution of \eqref{eq: Fokker-Planck} is a function $ m\in L^{2}\left(0,T;W\right)\cap C([0,T]; L^2(\Gamma))$ such that $\partial_{t}m\in L^{2}\left(0,T;V'\right)$ and
\begin{equation}
\begin{cases}
\ds \langle \partial_{t}m,v\rangle _{V', V}+\mathscr{A}\left(m,v\right)=0\quad\text{for all }v\in V\text{ and a.e. }t\in (0,T),\\
m\left(\cdot,0\right)=m_{0},
\end{cases}\label{eq: linear 1 weak form-1}
\end{equation}
where $\mathscr{A}:W\times V\rightarrow\mathbb{R}$ is the bilinear form
\[
\mathscr{A}\left(v,w\right)=\int_{\Gamma}\mu\partial m\partial vdx+\int_{\Gamma}bm\partial vdx.
\]
\end{defn}

Using similar arguments as in Section~\ref{subsec: The-linear-parabolic}, in particular
a Galerkin method, we obtain the following result, the proof of which is omitted.


\begin{thm}
\label{thm: existence and uniqueness FK}
If $ b\in L^\infty (\Gamma \times (0,T))$ and $m_0\in L^2(\Gamma)$,  there exists a unique 
function $ m\in L^{2}\left(0,T;W\right)\cap C([0,T]; L^2(\Gamma))$ such that $\partial_{t}m\in L^{2}\left(0,T;V'\right)$ and (\ref{eq: linear 1 weak form-1}). Moreover, there exists a constant $C$ which depends on  $(\mu_\alpha)_{\alpha\in \cA}$, $\left\Vert b\right\Vert _{\infty}$, $T$ and $\varphi$, such that 
\begin{equation}\label{eq:22}
\left\Vert m\right\Vert _{L^{2}\left(0,T;W\right)}  
+\left\Vert m\right\Vert _{L^{\infty}\left(0,T; L^2\left(\Gamma\right)\right)}+
\left\Vert \partial_{t}m\right\Vert _{L^{2}\left(0,T;V'\right)}\le C\left\Vert m_{0}\right\Vert _{L^2(\Gamma)}.
\end{equation}
\end{thm}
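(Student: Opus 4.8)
The plan is to mirror the Galerkin construction carried out for~\eqref{eq: linear 1} in Section~\ref{subsec: The-linear-parabolic}, with two changes: the equation is now forward in time, so the Cauchy datum is prescribed at $t=0$; and the unknown $m$ lives in $W$ while the test functions live in $V$. Accordingly, since $(\varphi\mathsf{v}_k)_{k\ge1}$ is a total family of $W$ (Remark~\ref{rem: test function}), I would seek approximate solutions of the form $m_n(x,t)=\sum_{k=1}^n z_k^n(t)\,(\varphi\mathsf{v}_k)(x)$ and impose $\langle\partial_t m_n,\mathsf{v}_k\rangle_{V',V}+\mathscr{A}(m_n,\mathsf{v}_k)=0$ for $k=1,\dots,n$, together with an approximate initial condition. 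Writing $Z=(z_1^n,\dots,z_n^n)^T$, the first term reads $\frac{d}{dt}\int_\Gamma m_n\mathsf{v}_k\,dx=\frac{d}{dt}(M_nZ)_k$, where $M_n$ is exactly the positive-definite mass matrix $(M_n)_{k\ell}=\int_\Gamma\varphi\mathsf{v}_k\mathsf{v}_\ell\,dx$ from the proof of Lemma~\ref{lem: Galerkin's method}, while $\mathscr{A}(m_n,\mathsf{v}_k)$ produces a constant stiffness matrix $B$ plus a bounded, measurable-in-$t$ transport matrix $C(t)$ coming from $b\in L^\infty(\Gamma\times(0,T))$. Since $M_n$ is invertible, this is a linear Carath\'eodory system $M_n\dot Z+(B+C(t))Z=0$ with a unique absolutely continuous solution once $Z(0)$ is fixed. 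I would fix $Z(0)$ by requiring $\int_\Gamma m_n(\cdot,0)\mathsf{v}_\ell\,dx=\int_\Gamma m_0\mathsf{v}_\ell\,dx$ for $\ell\le n$ (solvable because $M_n$ is invertible), which yields $\|m_n(\cdot,0)\|_{L^2(\Gamma)}\le C\|m_0\|_{L^2(\Gamma)}$ and $m_n(\cdot,0)\to m_0$ in $L^2(\Gamma)$.

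Next I would derive the uniform a priori bounds. The key point is that multiplying the $k$-th Galerkin equation by $z_k^n(t)$ and summing amounts to testing with $m_n\varphi^{-1}=\sum_k z_k^n\mathsf{v}_k\in V$, which restores coercivity despite the mismatch of trial and test spaces. This gives $\frac{d}{dt}\frac12\int_\Gamma m_n^2\varphi^{-1}\,dx+\mathscr{A}(m_n,m_n\varphi^{-1})=0$. Expanding $\partial(m_n\varphi^{-1})=\varphi^{-1}\partial m_n-\varphi^{-2}(\partial\varphi)m_n$, the leading contribution of $\mathscr{A}$ is the coercive term $\int_\Gamma\mu\varphi^{-1}(\partial m_n)^2\,dx$, while the remaining cross term and the two transport terms (controlled by $\|b\|_{L^\infty}$ and $\|\partial\varphi\|_{L^\infty}/\underline\varphi^2$) are handled by Young's inequality: half of the coercive term is absorbed and the rest is bounded by $C\int_\Gamma m_n^2\varphi^{-1}\,dx$. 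A Gr\"onwall argument in forward time (or the $e^{\lambda t}$ device used in Lemma~\ref{lem: Energy estimate}) then yields $\|m_n\|_{L^\infty(0,T;L^2(\Gamma))}+\|m_n\|_{L^2(0,T;W)}\le C\|m_0\|_{L^2(\Gamma)}$ with $C$ depending only on $(\mu_\alpha)_{\alpha\in\mathcal{A}}$, $\|b\|_\infty$, $T$ and $\varphi$. For $\partial_t m_n$ I would argue as for~\eqref{eq: estimate v_t in W'}: split $V=\widetilde G_1\oplus\widetilde G_2$ with $\widetilde G_2=\mathrm{span}\{\mathsf{v}_1,\dots,\mathsf{v}_n\}$ and $\widetilde G_1=\{v\in V:\int_\Gamma\varphi\mathsf{v}_k v\,dx=0,\ k\le n\}$ (the sum is direct because $M_n$ is invertible), note that $\langle\partial_t m_n,v\rangle=\langle\partial_t m_n,v_n\rangle=-\mathscr{A}(m_n,v_n)$ for the component $v_n\in\widetilde G_2$ of $v$, and bound this by $C\|m_n\|_W\|v\|_V$ using $\|v_n\|_V\le C\|v\|_V$. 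This gives $\|\partial_t m_n\|_{L^2(0,T;V')}\le C\|m_0\|_{L^2(\Gamma)}$.

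With these bounds in hand I would extract a subsequence such that $m_n\rightharpoonup m$ in $L^2(0,T;W)$ and $\partial_t m_n\rightharpoonup\partial_t m$ in $L^2(0,T;V')$, pass to the limit in the Galerkin identity tested against fixed finite combinations $\sum_{k\le N}d_k(t)\mathsf{v}_k$, and conclude by density of such combinations in $L^2(0,T;V)$ that $m$ solves~\eqref{eq: linear 1 weak form-1}. The regularity $m\in C([0,T];L^2(\Gamma))$ and the recovery of the initial condition $m(\cdot,0)=m_0$ follow from \cite[Theorem 3.1]{lm68} (or the argument of \cite[pages 287--288]{Evans2010}) exactly as in Theorem~\ref{thm: existence and uniqueness linear equation}, testing with functions that do not vanish at $t=0$ and using $m_n(\cdot,0)\to m_0$. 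Finally, the estimate~\eqref{eq:22} passes to the limit by weak lower semicontinuity of the norms, and uniqueness follows from the energy inequality applied to the difference of two solutions: taking $m_0=0$ forces $m\equiv0$.

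The step I expect to be the main obstacle is the energy estimate, specifically the handling of the first-order transport term $\int_\Gamma b\,m_n\,\partial(m_n\varphi^{-1})\,dx$. Unlike the pure heat operator of Section~\ref{subsec: The-linear-parabolic}, the bilinear form $\mathscr{A}$ is neither symmetric nor coercive on $W\times V$ in any naive sense; coercivity is recovered only after the weighting by $\varphi^{-1}$, and the transport and $\partial\varphi$ terms must be absorbed carefully so that the constant in Gr\"onwall's lemma depends on $\|b\|_{L^\infty}$ while the estimate stays linear in $\|m_0\|_{L^2(\Gamma)}$. A secondary point to check, though automatic here, is that the trial functions $\varphi\mathsf{v}_k$ satisfy the compatibility conditions defining $W$: indeed $\varphi|_{\Gamma_\alpha}(\nu_i)=\gamma_{i\alpha}$ and $\mathsf{v}_k\in C(\Gamma)$ give $(\varphi\mathsf{v}_k)|_{\Gamma_\alpha}(\nu_i)/\gamma_{i\alpha}=\mathsf{v}_k(\nu_i)$, which is independent of $\alpha\in\mathcal{A}_i$, so every $m_n$ lies in $W$ and the Robin/transmission conditions are encoded weakly through the choice of the test space $V$.
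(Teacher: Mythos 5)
Your proposal is correct and follows exactly the route the paper intends: the paper omits the proof of Theorem~\ref{thm: existence and uniqueness FK}, stating only that it follows by ``similar arguments as in Section~\ref{subsec: The-linear-parabolic}, in particular a Galerkin method,'' and your adaptation (trial functions $\varphi\mathsf{v}_k$ spanning $W$, test functions $\mathsf{v}_k$ in $V$, the same mass matrix $M_n$, energy estimates via the $\varphi^{-1}$-weighted test function and Gr\"onwall, and the $G_1\oplus G_2$ splitting for $\partial_t m_n$) is precisely that argument, consistent with the $\varphi^{-1}$-weighting the paper itself uses in the remark following the theorem and in Lemma~\ref{lem: stability FK}.
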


\begin{rem}
If $m_0\in \mathcal{M}$, which will be the case when solving the MFG system~\eqref{eq: MFG system},
then $m(\cdot, t)\in \mathcal{M}$ for all $t\in [0,T]$. Indeed, 
we use $v\equiv1\in V$ as a test-function for~\eqref{eq: Fokker-Planck}. 
Since $\partial v=0$, integrating~\eqref{eq: linear 1 weak form-1} from $0$ to $t$, we get
$ \int_{0}^{t} \int_{\Gamma} \partial_t m(x,s) dxds=0$.  
This implies that 
\[
\int_{\Gamma} m(x,t) dx = \int_{\Gamma} m_0 (x) dx =1,\quad\text{ for all } t\in(0,T].
\]
Setting $m^-=-\mathds{1}_{\left\{m< 0\right\}} m$, we can also use  $v= \varphi^{-1} m^- e^{-\lambda t}$ as a  test-function 
 for $\lambda\in \R_+$. 
Indeed, the latter function  belongs to $L^2(0,T;V)$. Taking $\lambda$ large enough and using similar arguments as for the energy estimate (\ref{eq:22}) yield that $ m^-=0$, i.e., $m\ge 0$.
\end{rem}
We end this section by stating a stability result, which will be useful in the proof of
the main Theorem.

\begin{lem}\label{lem: stability FK}
Let $m_{0\varepsilon},b_{\varepsilon}$ be sequences of functions
satisfying
\[
m_{0\varepsilon}\longrightarrow m_{0}\text{ in }L^{2}\left(\Gamma\right),\qquad b_{\varepsilon}\longrightarrow b\text{ in }
L^{2}\left(\Gamma \times (0,T)\right),
\]
and for some positive number $K$ independent of $\varepsilon$, 
$\|b\|_{L^{\infty}\left(\Gamma \times (0,T)\right)}\le K$, $\|b_{\varepsilon}\|_{L^{\infty}\left(\Gamma \times (0,T)\right)}\le K$.
\\
Let $ m_{\varepsilon} $  (respectively  $m$) be the solution of   (\ref{eq: linear 1 weak form-1}) corresponding to the datum $m_{0\varepsilon}$ (resp. $m_0$)
and the coefficient $b_{\varepsilon}$ (resp. $b$).
The sequence  $(m_{\varepsilon})$
converges to $m$ in $L^{2}\left(0,T;W\right)\cap L^\infty\left(0,T;L^2(\Gamma)\right)$, and the sequence $
 (\partial_t m_{\varepsilon})$ converges to $(\partial_t m) $ in $L^{2}\left(0,T;V'\right)$. 
\end{lem}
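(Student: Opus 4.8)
The plan is to set $w_\varepsilon := m_\varepsilon - m$ and to show that it converges to $0$ in the three norms by an energy estimate of exactly the type underlying Theorem~\ref{thm: existence and uniqueness FK}, applied this time to an \emph{inhomogeneous} Fokker--Planck problem. Subtracting the two weak formulations~\eqref{eq: linear 1 weak form-1} (the one for $m_\varepsilon$ with drift $b_\varepsilon$, the one for $m$ with drift $b$) and writing $b_\varepsilon m_\varepsilon - b m = b_\varepsilon w_\varepsilon + (b_\varepsilon - b)m$, one finds that for a.e.\ $t\in(0,T)$ and all $v\in V$,
\begin{equation*}
\langle \partial_t w_\varepsilon, v\rangle_{V',V} + \int_\Gamma \mu\, \partial w_\varepsilon\, \partial v\, dx + \int_\Gamma b_\varepsilon w_\varepsilon\, \partial v\, dx = -\int_\Gamma (b_\varepsilon - b)\, m\, \partial v\, dx,
\end{equation*}
with initial datum $w_\varepsilon(\cdot,0) = m_{0\varepsilon}-m_0$. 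Thus $w_\varepsilon$ solves problem~\eqref{eq: linear 1 weak form-1} with drift $b_\varepsilon$, nonzero initial datum $m_{0\varepsilon}-m_0$ and a source term. Setting $G_\varepsilon := (b_\varepsilon - b)m$, the right-hand side defines $g_\varepsilon(t)\in V'$ by $\langle g_\varepsilon(t),v\rangle = -\int_\Gamma G_\varepsilon(\cdot,t)\,\partial v\,dx$, whence $\|g_\varepsilon\|_{L^2(0,T;V')}\le \|G_\varepsilon\|_{L^2(\Gamma\times(0,T))}$.

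Next I would reproduce, for $w_\varepsilon$, the energy estimate of Section~\ref{subsec: The-linear-parabolic} behind Theorem~\ref{thm: existence and uniqueness FK}, now retaining the source and the initial datum. Using $\varphi^{-1}w_\varepsilon e^{-\lambda t}\in L^2(0,T;V)$ as a test function (admissible since $\varphi^{-1}$ maps $W$ onto $V$ by Remark~\ref{rem: test function}), the time-derivative term yields $\tfrac12\tfrac{d}{dt}\int_\Gamma \varphi^{-1}w_\varepsilon^2\,dx$ plus a positive $\tfrac{\lambda}{2}$ contribution, the diffusion term yields the coercive quantity $\int_\Gamma \mu\varphi^{-1}(\partial w_\varepsilon)^2\,dx$ plus a lower-order term, and both the drift term and the source are absorbed by Young's inequality using $\|b_\varepsilon\|_{L^\infty}\le K$ and $\|\partial\varphi\|_{L^\infty(\Gamma)}$. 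Choosing $\lambda$ large enough exactly as in Lemma~\ref{lem: Energy estimate}, and bounding $\partial_t w_\varepsilon$ in $V'$ directly from the weak equation (the bilinear part being controlled by $C\|w_\varepsilon\|_W\|v\|_V$ thanks to $\|b_\varepsilon\|_{L^\infty}\le K$), one obtains, with $C$ depending only on $(\mu_\alpha)_{\alpha\in\cA}$, $K$, $T$ and $\varphi$,
\begin{equation*}
\|w_\varepsilon\|_{L^2(0,T;W)} + \|w_\varepsilon\|_{L^\infty(0,T;L^2(\Gamma))} + \|\partial_t w_\varepsilon\|_{L^2(0,T;V')} \le C\left(\|m_{0\varepsilon}-m_0\|_{L^2(\Gamma)} + \|G_\varepsilon\|_{L^2(\Gamma\times(0,T))}\right).
\end{equation*}

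It then suffices to let the right-hand side tend to $0$. The first term does so by hypothesis. For the second, since $m\in L^\infty(0,T;L^2(\Gamma))\subset L^2(\Gamma\times(0,T))$ and $|b_\varepsilon - b|\le 2K$ a.e., the integrands $(b_\varepsilon-b)^2 m^2$ are dominated by $4K^2 m^2\in L^1(\Gamma\times(0,T))$; extracting from $b_\varepsilon\to b$ in $L^2$ an a.e.\ convergent subsequence and invoking dominated convergence shows $\|G_\varepsilon\|_{L^2}\to 0$ along that subsequence, and since the limit is always the same the whole sequence converges. Therefore $w_\varepsilon\to 0$ in $L^2(0,T;W)\cap L^\infty(0,T;L^2(\Gamma))$ and $\partial_t w_\varepsilon\to 0$ in $L^2(0,T;V')$, which is the assertion.

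The only genuinely non-routine point is the control of the source, namely $\|(b_\varepsilon-b)m\|_{L^2(\Gamma\times(0,T))}\to 0$; here the \emph{uniform} bound $\|b_\varepsilon\|_{L^\infty},\|b\|_{L^\infty}\le K$ is indispensable, since $L^2$-convergence of $b_\varepsilon$ alone would not pass to the product $b_\varepsilon m$. All remaining steps merely repeat the energy estimate of Section~\ref{subsec: The-linear-parabolic}, the Kirchhoff and jump conditions producing no extra boundary terms because they are already built into the spaces $V$, $W$ and hence into the bilinear form $\mathscr{A}$.
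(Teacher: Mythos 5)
Your proof is correct and follows essentially the same route as the paper: subtract the two weak formulations, test with $(m_\varepsilon-m)e^{-\lambda t}\varphi^{-1}$, and run the energy estimate of Section~\ref{subsec: The-linear-parabolic} with $\lambda$ large, the uniform bound $\|b_\varepsilon\|_\infty\le K$ absorbing the drift. The only (harmless) divergence is in showing $\|(b_\varepsilon-b)m\|_{L^2(\Gamma\times(0,T))}\to 0$: the paper uses H\"older with $b_\varepsilon\to b$ in $L^4$ and $m\in L^4$, whereas you use domination by $4K^2m^2$ and the a.e.-subsequence argument, which works just as well and needs only $m\in L^2$.
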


\begin{proof}[Proof of Lemma~\ref{lem: stability FK}]
  Taking $\left(m_{\varepsilon}-m\right)e^{-\lambda t}\varphi^{-1}$ as a test-function in  the versions of (\ref{eq: linear 1 weak form-1}) satisfied by 
$m_{\varepsilon}$ and $m$, subtracting,  we obtain that
\begin{align}
 & \int_{\Gamma}\left[\dfrac{1}{2}\partial_{t}\left(\left(m_{\varepsilon}-m\right)^{2}e^{-\lambda t}\right)
+\dfrac{\lambda}{2}\left(m_{\varepsilon}-m\right)^{2}e^{-\lambda t}\right]\varphi^{-1}dx+
\int_{\Gamma}\mu (\partial\left(m_{\varepsilon}-m\right))^{2}e^{-\lambda t}\varphi^{-1}dx\nonumber \\
 & +\int_{\Gamma}\mu \left(m_{\varepsilon}-m\right) \partial\left(m_{\varepsilon}-m\right)e^{-\lambda t}\partial(\varphi^{-1})dx+\int_{\Gamma}\left(b_{\varepsilon}m_{\varepsilon}-bm\right)\partial\left(m_{\varepsilon}-m\right)e^{-\lambda t}\varphi^{-1}dx\nonumber \\
 & +\int_{\Gamma}\left(b_{\varepsilon}m_{\varepsilon}-bm\right)
\left(m_{\varepsilon}-m\right)e^{-\lambda t}\partial(\varphi^{-1})dx=0.\label{eq: stability for FK}
\end{align}
There exists a positive constant $K$ such that  $\left\Vert b_{\varepsilon}\right\Vert _{\infty},\left\Vert b\right\Vert _{\infty}\le K$ for all $\varepsilon$. Hence, there exists a positive constant $C$ (in fact it varies from one line to the other in what follows) such that
\begin{eqnarray*}
  &&  \int_{\Gamma}\left[\dfrac{1}{2}\partial_{t}\left(\left(m_{\varepsilon}-m\right)^{2}e^{-\lambda t}\right)+\dfrac{\lambda}{2}\left(m_{\varepsilon}-m\right)^{2}e^{-\lambda t}\right]\varphi^{-1}dx+\int_{\Gamma}\mu (\partial\left(m_{\varepsilon}-m\right))^{2}e^{-\lambda t}\varphi^{-1}dx\nonumber\\
  &\leq&
  C \int_{\Gamma} \left(\left|m_{\varepsilon}-m\right|^{2}
  +\left|m_{\varepsilon}-m\right|\left|\partial\left(m_{\varepsilon}-m\right)\right|
  + |m|\left|b_{\varepsilon}-b\right|\left(
  \left|\partial\left(m_{\varepsilon}-m\right)\right| +|m_{\varepsilon}-m|\right)\right)e^{-\lambda t}\varphi^{-1} dx\\
  &\leq&
  C \int_{\Gamma} \left(\left|m_{\varepsilon}-m\right|^{2}
  + \left|b_{\varepsilon}-b\right|^{2}  m^2 \right)e^{-\lambda t}\varphi^{-1} dx
  + \int_{\Gamma}\dfrac{\mu}{2} (\partial\left(m_{\varepsilon}-m\right))^{2}e^{-\lambda t}\varphi^{-1}dx.
\end{eqnarray*}

The assumptions on the coefficents $b_\varepsilon$ and $b$ imply in fact that 
$b_\varepsilon\to b$ in $L^p(\Gamma \times (0,T))$ for all $1\le p<\infty$.
On the other hand, we  know that $m\in L^q(\Gamma \times (0,T))$ for all $1\le q<\infty$.
From the latter  observation with $p=q=4$, we see that the quantity 
 $ \int_0^T \int_{\Gamma}   \left( \left|b_{\varepsilon}-b\right|^{2}  m^2 \right)e^{-\lambda t}\varphi^{-1} dx dt$
tends to $0$ as $\varepsilon\to 0$ uniformly in $\lambda>0$. We write 
\[\ds \int_0^T \int_{\Gamma}   \left( \left|b_{\varepsilon}-b\right|^{2}  m^2 \right)e^{-\lambda t}\varphi^{-1} dx dt =o_{\varepsilon}(1) .\]
 Choosing $\lambda$ large enough and integrating the latter inequality from $0$ to $t\in [0,T]$, we obtain
\[
\left\Vert m_{\varepsilon}-m\right\Vert _{L^{2}\left(0,T;W\right)}  + \left\Vert m_{\varepsilon}-m\right\Vert_{L^\infty\left(0,T;L^2(\Gamma)\right)}\le
o_\varepsilon(1)+
C \left\Vert m_{0\varepsilon}-m_{0}\right\Vert _{L^{2}\left(\Gamma\right)}.
\]
Subtracting the two versions of (\ref{eq: linear 1 weak form-1}) and using the latter estimate also yields
\[
\left\Vert \partial_t m_{\varepsilon}-\partial_t m\right\Vert _{L^{2}\left(0,T;V'\right)} 
\le o_\varepsilon(1)+
C\left\Vert m_{0\varepsilon}-m_{0}\right\Vert _{L^{2}\left(\Gamma\right)},
\]
which achieves the proof.
\end{proof}

\section{The Hamilton-Jacobi equation}
\label{sec:hamilt-jacobi-equat}
This section is devoted to the following boundary value problem including
a Hamilton-Jacobi equation
\begin{equation}
\begin{cases}
-\partial_{t}v-\mu_{\alpha}\partial^{2}v+H\left(x,\partial v\right)=f, & \text{in }\left(\Gamma_{\alpha}\backslash\mathcal{V}\right)\times\left(0,T\right),~\alpha\in \cA,\\
v|_{\Gamma_{\alpha}}\left(\nu_{i},t\right)=v|_{\Gamma_{\beta}}\left(\nu_{i},t\right) & t\in\left(0,T\right),~\alpha,\beta\in\mathcal{A}_{i},~\nu_{i}\in\mathcal{V},\\
\ds\sum_{\alpha\in\mathcal{A}_{i}}\gamma_{i\alpha}\mu_{\alpha}\partial_{\alpha}v\left(\nu_{i},t\right)=0, & t\in\left(0,T\right),~\nu_{i}\in\mathcal{V},\\
v\left(x,T\right)=v_{T}\left(x\right), & x\in\Gamma,
\end{cases}\label{eq: H-J}
\end{equation}
where $f\in L^{2}\left(\Gamma \times (0,T)\right)$, $v_T\in V$ and
the Hamiltonian $H:\Gamma\times\mathbb{R}\rightarrow\mathbb{R}$
satisfies the running assumptions (H).

\begin{defn} For $f\in L^{2}\left(\Gamma \times (0,T)\right)$ and $v_T \in V$,
  a weak solution of \eqref{eq: H-J} is a function $v\in L^{2}\left(0,T;H^{2}\left(\Gamma\right)\right)\cap C([0,T];V)$ such that $\partial_{t}v\in L^{2}\left(\Gamma \times (0,T)\right)$ and
  \begin{eqnarray}
\label{eq:23}    
  \left\{
  \begin{array}{l}
\ds\int_{\Gamma}\left(-\partial_{t}vw+\mu\partial v\partial w+H\left(x,\partial v\right)w\right)dx=\int_{\Gamma}fwdx
\quad\text{for all $w\in W,$  a.a. $t\in\left(0,T\right),$}\\
v\left(x,T\right)=v_{T}(x).
\end{array}
  \right.
\end{eqnarray}
\end{defn}

We start by proving existence and uniqueness of a weak solution for~\eqref{eq: H-J}.
 Next, further regularity for the solution will be  obtained  under stronger assumptions.

\subsection{Existence and uniqueness for the Hamilton-Jacobi equation}

\begin{thm}
\label{thm: existence and uniqueness of HJ equation}
Under the running assumptions (H),  if $f\in L^{2}\left(\Gamma\times (0,T)\right)$, then the boundary value
problem~\eqref{eq: H-J} has a unique weak solution.
\end{thm}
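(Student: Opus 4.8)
The plan is to combine the linear theory of Theorem~\ref{thm: regularity for v} with a fixed-point argument to handle the nonlinear Hamiltonian term, and then to establish uniqueness via a comparison/energy argument that exploits the convexity of $H$.

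For existence, I would set up a fixed-point map on a suitable closed subset of $L^2(\Gamma\times(0,T))$. Given $z\in L^2(0,T;V)$, the Hamiltonian term $H(x,\partial z(x,t))$ belongs to $L^2(\Gamma\times(0,T))$ because of the linear growth bound~\eqref{eq:17}: indeed $|H^\alpha(x,p)|\le C_0(|p|+1)$ gives $\|H(\cdot,\partial z)\|_{L^2(\Gamma\times(0,T))}\le C_0(\|\partial z\|_{L^2(\Gamma\times(0,T))}+|\Gamma|^{1/2}T^{1/2})$. I would therefore define $\Phi(z)=v$, where $v$ is the unique solution of the linear problem~\eqref{eq: linear 1} with right-hand side $h=f-H(\cdot,\partial z)\in L^2(\Gamma\times(0,T))$ and terminal datum $v_T\in V$, furnished by Theorem~\ref{thm: regularity for v}. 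By that theorem, $v\in L^2(0,T;H^2(\Gamma))\cap C([0,T];V)$ with $\partial_t v\in L^2(\Gamma\times(0,T))$, and the estimate~\eqref{meilleure-esti-v} controls $\|v\|_{L^2(0,T;V)}$ in terms of $\|h\|_{L^2(\Gamma\times(0,T))}$ and $\|v_T\|_V$. The growth bound feeds this back into an a priori bound on the fixed point: any solution $v=\Phi(v)$ satisfies $\|v\|_{L^2(0,T;V)}\le C(\|f\|_{L^2}+C_0\|\partial v\|_{L^2}+1+\|v_T\|_V)$. This is not immediately a closed estimate because the linear estimate loses the needed smallness in $C_0$; to obtain an unconditional a priori bound I would instead work directly on the nonlinear equation, multiplying by $v\,e^{\lambda t}\varphi$ (as in the proof of Lemma~\ref{lem: Energy estimate}) and using~\eqref{eq:17} together with Young's inequality to absorb the Hamiltonian contribution into the good gradient term, thereby bounding $v$ in $L^\infty(0,T;L^2(\Gamma))\cap L^2(0,T;V)$ by a constant depending only on the data. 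I would then apply Schauder's fixed point theorem: on the ball of radius $R$ in $L^2(0,T;V)$ determined by the a priori bound, $\Phi$ maps into a set bounded in $L^2(0,T;H^2(\Gamma))$ with $\partial_t v$ bounded in $L^2(\Gamma\times(0,T))$, which is compactly embedded in $L^2(0,T;V)$ by an Aubin--Lions type argument (the compact embeddings referenced in Appendix~\ref{app:embeddings}); continuity of $\Phi$ follows from the continuity of $z\mapsto H(\cdot,\partial z)$ on $L^2(0,T;V)$, which uses~\eqref{eq:18} to get the Lipschitz bound $\|H(\cdot,\partial z_1)-H(\cdot,\partial z_2)\|_{L^2}\le C_0\|\partial z_1-\partial z_2\|_{L^2}$.

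For uniqueness, I would let $v_1,v_2$ be two weak solutions, set $w=v_1-v_2$, and test the difference of the two equations. The difference of the Hamiltonian terms is $H(x,\partial v_1)-H(x,\partial v_2)$; writing this as $\int_0^1 \partial_p H(x,\partial v_2+s\,\partial w)\,ds\cdot \partial w$, the bound~\eqref{eq:18} gives $|H(x,\partial v_1)-H(x,\partial v_2)|\le C_0|\partial w|$. Testing the difference equation against $w\,e^{\lambda t}\varphi$ and integrating from $s$ to $T$ (recalling $w(T)=0$), I would obtain, after integration by parts respecting the Kirchhoff structure encoded in $W$, an inequality of the form $e^{\lambda s}\int_\Gamma \tfrac12 w^2(s)\varphi\,dx + \int_s^T\!\int_\Gamma \mu(\partial w)^2 e^{\lambda t}\varphi\,dx\,dt\le \int_s^T\!\int_\Gamma C_0|\partial w||w|e^{\lambda t}\varphi\,dx\,dt + (\text{terms from }\partial\varphi)$. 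Applying Young's inequality to absorb the $|\partial w||w|$ term into the gradient term and choosing $\lambda$ large enough to dominate the remaining zeroth-order contributions forces $w\equiv 0$. This is exactly the Gronwall-type mechanism used in the proof of Lemma~\ref{lem: stability FK} and in the energy estimate of Theorem~\ref{thm: existence and uniqueness linear equation}.

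The main obstacle I anticipate is the existence step rather than uniqueness: the Hamiltonian is only Lipschitz and of linear growth, not small, so one cannot run a contraction argument on $\Phi$ directly, and the linear estimate~\eqref{meilleure-esti-v} does not close by itself. The delicate point is establishing the a priori bound independent of the nonlinear term by the energy method directly on~\eqref{eq:23}, and then verifying that the image of $\Phi$ lies in a set that is both bounded in the regular space $L^2(0,T;H^2(\Gamma))\cap W^{1,2}(0,T;L^2(\Gamma))$ and compact in $L^2(0,T;V)$, so that Schauder applies. Care must be taken throughout that all integrations by parts are compatible with the general Kirchhoff conditions, i.e.\ that the boundary terms at the vertices vanish precisely because the test functions lie in $W$ and the solution's normal derivatives satisfy~\eqref{eq:2}; this is where the weighted space $L^2_\varphi(\Gamma)$ and the isomorphism $v\mapsto v\varphi$ of Remark~\ref{rem: test function} do the essential bookkeeping.
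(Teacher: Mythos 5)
Your uniqueness argument is correct and is essentially the paper's: the paper proves a slightly stronger comparison principle (Proposition~\ref{thm: comparison principle HJ}) by testing with the positive part $\overline{v}^{+}e^{\lambda t}\varphi$, but the mechanism --- the Lipschitz bound \eqref{eq:18} giving $|H(x,\partial v_1)-H(x,\partial v_2)|\le C_0|\partial(v_1-v_2)|$, Young's inequality, and a large $\lambda$ --- is exactly what you describe, and testing with $w e^{\lambda t}\varphi$ suffices for uniqueness.

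There is, however, a genuine gap in your existence step. You propose to apply Schauder's theorem to $\Phi$ \emph{on the ball $B_R$ of $L^2(0,T;V)$ determined by the a priori bound}. But that a priori bound controls only the \emph{fixed points} of $\Phi$; it does not show $\Phi(B_R)\subseteq B_R$. Indeed, for $z\in B_R$ the linear estimate \eqref{meilleure-esti-v} only gives $\|\Phi(z)\|_{L^2(0,T;V)}\le C(\|f\|_{L^2}+C_0 R+1+\|v_T\|_V)$ with a constant $C$ that is not small, so the image of the ball can leave the ball, and the precompact-image version of Schauder (\cite[Corollary 11.2]{GT2001}) does not apply to $(B_R,\Phi)$. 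Nor can you take the whole space as the convex set, since for a merely sublinear $H$ the image $\Phi(L^2(0,T;V))$ is unbounded. The paper resolves this by a two-stage argument: first it treats a \emph{bounded} Hamiltonian, for which $\|H(\cdot,\partial z)\|_{L^2}\le C_H$ uniformly in $z$, so that the image of the entire space is bounded in $L^2(0,T;H^2(\Gamma))\cap W^{1,2}(0,T;L^2(\Gamma))$, hence precompact in $L^2(0,T;V)$ by Aubin--Lions, and Schauder applies with $\mathfrak{S}=L^2(0,T;V)$; then it truncates $H$ to $H_n$, obtains solutions $v_n$, proves precisely the uniform energy estimate you describe (testing with $-v_n e^{\lambda t}\varphi$ and using \eqref{eq:17}), upgrades it via Theorem~\ref{thm: regularity for v} to a uniform bound in $L^2(0,T;H^2(\Gamma))\cap W^{1,2}(0,T;L^2(\Gamma))$, and passes to the limit using Aubin--Lions compactness and dominated convergence for $H_n(x,\partial v_n)$. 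Your argument can alternatively be repaired without truncation by invoking the Leray--Schauder theorem, i.e.\ by establishing the a priori bound uniformly for the family $v=\sigma\Phi(v)$, $\sigma\in[0,1]$ (your energy computation does extend to this family), but as written the invariant-ball Schauder step fails.
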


Uniqueness is a direct consequence of the following proposition.

\begin{prop}
  \label{thm: comparison principle HJ} (Comparison principle)
Under the same assumptions as in Theorem \ref{thm: existence and uniqueness of HJ equation},
 let $v$ and $\hat{v}$ be respectively weak sub- and super-solution
of \eqref{eq: H-J}, i.e., $v,\hat{v}\in L^{2}\left(0,T;H^{2}\left(\Gamma\right)\right),
\partial_{t}v,\partial_{t}\hat{v}\in L^{2}\left(\Gamma \times (0,T)\right)$
such that 
\begin{eqnarray*}
\left\{
\begin{array}{ll}
\begin{array}{l}
  \ds\int_{\Gamma}\left(-\partial_{t}vw+\mu\partial v\partial w+H\left(x,\partial v\right)w\right)dx\le\int_{\Gamma}fwdx,\\[3mm]
  \ds\int_{\Gamma}\left(-\partial_{t}\hat{v}w+\mu\partial\hat{v}\partial w+H\left(x,\partial\hat{v}\right)w\right)dx\ge\int_{\Gamma}fwdx,
\end{array}
&
\text{for all $w\in W,$ $w\geq 0$, a.a. $t\in\left(0,T\right),$}\\
v\left(x,T\right)\le v_{T}(x)\le\hat{v}\left(x,T\right) \quad \hbox{ for a.a. }x\in \Gamma.
\end{array}  
\right.
\end{eqnarray*}
Then $v\le\hat{v}$ in $\Gamma\times\left(0,T\right)$.
\end{prop}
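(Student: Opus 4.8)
The plan is to run a weak comparison argument directly on the energy formulation, exploiting that the Lipschitz bound \eqref{eq:18} on $\partial_p H^\alpha$ makes the Hamiltonian contribution controllable by the viscous term. Set $\rho := v-\hat{v}$ and denote by $\rho^+=\max(\rho,0)$ its positive part. Since $v,\hat{v}\in L^2(0,T;H^2(\Gamma))\cap C([0,T];V)$, for a.e.\ $t$ the function $\rho(\cdot,t)$ belongs to $V=H^1(\Gamma)$, and because taking the positive part preserves both $H^1_b$-regularity and continuity at the vertices, $\rho^+(\cdot,t)\in V$ as well. The decisive structural observation is that $\rho^+$ is \emph{not} itself an admissible test function (it is continuous at the vertices but does not meet the $\gamma$-weighted matching condition defining $W$), whereas $w:=\rho^+\varphi$ does belong to $W$: indeed, by Remark~\ref{rem: test function} the map $z\mapsto z\varphi$ is an isomorphism from $V$ onto $W$, and $w\ge 0$ since $\varphi>0$. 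This is precisely the nonnegative test function I would use.

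Testing the subsolution inequality for $v$ and the supersolution inequality for $\hat{v}$ against $w=\rho^+\varphi$ and subtracting yields
\begin{equation*}
\int_\Gamma\Bigl[-\partial_t\rho\,\rho^+\varphi+\mu\,\partial\rho\,\partial(\rho^+\varphi)+\bigl(H(x,\partial v)-H(x,\partial\hat{v})\bigr)\rho^+\varphi\Bigr]dx\le 0.
\end{equation*}
Using the pointwise identities $\partial_t\rho\,\rho^+=\tfrac12\partial_t(\rho^+)^2$, $\partial\rho\,\partial\rho^+=(\partial\rho^+)^2$ and $\partial\rho\,\rho^+=\partial\rho^+\,\rho^+$ (valid a.e.\ thanks to $\partial_t\rho\in L^2(\Gamma\times(0,T))$ and $\rho\in L^2(0,T;H^2(\Gamma))$), and writing $\partial(\rho^+\varphi)=\partial\rho^+\,\varphi+\rho^+\partial\varphi$, this becomes, with $g(t):=\int_\Gamma(\rho^+)^2\varphi\,dx$,
\begin{equation*}
-\tfrac12 g'(t)+\int_\Gamma\mu(\partial\rho^+)^2\varphi\,dx\le-\int_\Gamma\mu\,\partial\rho^+\,\rho^+\partial\varphi\,dx-\int_\Gamma\bigl(H(x,\partial v)-H(x,\partial\hat{v})\bigr)\rho^+\varphi\,dx.
\end{equation*}

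On the set $\{\rho>0\}$ one has $\partial\rho^+=\partial v-\partial\hat{v}$, so \eqref{eq:18} gives $|H(x,\partial v)-H(x,\partial\hat{v})|\le C_0|\partial\rho^+|$ there, while this difference is multiplied by $\rho^+$, which vanishes off $\{\rho>0\}$; hence the Hamiltonian term is bounded by $C_0\int_\Gamma|\partial\rho^+|\,\rho^+\varphi\,dx$. The term involving $\partial\varphi$ is estimated through $\overline{\mu}$ and $\|\partial\varphi\|_{L^\infty(\Gamma)}$. Applying Young's inequality to both right-hand side terms, I would absorb a fraction of $\int_\Gamma\mu(\partial\rho^+)^2\varphi\,dx$ into the left-hand side, using that $\varphi$ is bounded above and below by positive constants, to arrive at a differential inequality of the form $-g'(t)\le C\,g(t)$ for a constant $C$ depending only on $C_0$, $\overline{\mu}$, $\underline{\mu}$ and $\varphi$.

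The conclusion then follows from a backward Gronwall argument. The terminal inequality $v(\cdot,T)\le v_T\le\hat{v}(\cdot,T)$ gives $\rho^+(\cdot,T)=0$, hence $g(T)=0$; setting $h(t):=g(t)e^{Ct}$ one finds $h'\ge 0$, so $h$ is nondecreasing, and since $h\ge 0$ with $h(T)=0$ we get $h\equiv 0$, i.e.\ $g\equiv 0$ on $[0,T]$. Therefore $\rho^+\equiv 0$ and $v\le\hat{v}$ in $\Gamma\times(0,T)$. (Equivalently, one may insert a weight $e^{\lambda t}$ into the test function and integrate from $s$ to $T$, exactly as in the energy estimates of Lemma~\ref{lem: Energy estimate}.) The main technical point to handle carefully is the admissibility of $\rho^+\varphi$ as a test function together with the justification of the chain rule for $t\mapsto\int_\Gamma(\rho^+)^2\varphi\,dx$; once these are in place, the absence of explicit vertex terms---automatic here because every test function lives in $W$ and the transmission conditions are encoded in the function spaces---reduces the estimate to an essentially one-dimensional parabolic computation.
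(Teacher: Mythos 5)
Your proposal is correct and follows essentially the same route as the paper: test the difference of the sub- and super-solution inequalities with the nonnegative function $\overline{v}^{+}\varphi$ (the paper inserts the weight $e^{\lambda t}$ directly into the test function and integrates over $(0,T)$, choosing $\lambda$ large to absorb the lower-order terms, which is exactly the backward Gronwall argument you describe), control the Hamiltonian difference via the Lipschitz bound \eqref{eq:18}, and conclude from $\overline{v}^{+}(\cdot,T)=0$. Your explicit remark that $\rho^{+}\in V$ but only $\rho^{+}\varphi\in W$ is admissible is precisely the structural point the paper relies on through Remark~\ref{rem: test function}.
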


\begin{proof}[Proof of Proposition~\ref{thm: comparison principle HJ}]
Setting $\overline{v}=v-\hat{v}$, we have, for all $w\in W$ such that  $w\geq 0$ and for a.a $t\in (0,T)$:
\[\
\int_{\Gamma} -\partial_{t}\overline{v}w+\mu\partial\overline{v}\partial w+\left(H\left(x,\partial v\right)-H\left(x,\partial\hat{v}\right)\right) w dx\le0,
\]
and $\overline{v}\left(x,T\right)\le0$ for all $x\in\Gamma$. Set  
$\overline{v}^{+}= \overline{v}\; \mathds{1}_ {\{\overline{v}>0\}}$ 
and $w=\overline{v}^{+}e^{\lambda t}\varphi$. We have
\begin{align*}
  & -\int_{\Gamma}\partial_{t}\left(\dfrac{(\overline{v}^{+})^{2}}{2}e^{\lambda t}\right)\varphi dx
  +\int_{\Gamma}\dfrac{\lambda}{2}(\overline{v}^{+})^{2}e^{\lambda t}\varphi dx
  +\int_{\Gamma}\mu \partial\overline{v}^{+}\partial (\overline{v}^{+}\varphi ) e^{\lambda t}dx\\
 & +\int_{\Gamma}\left[H\left(x,\partial v\right)-H\left(x,\partial\hat{v}\right)\right]\overline{v}^{+}\varphi e^{\lambda t}dx=0.
\end{align*}
Integrating from $0$ to $T$, we get
\begin{align*}
  & \int_{\Gamma}\left(\dfrac{\overline{v}^{+}\left(0\right)^{2}}{2}-\dfrac{\overline{v}^{+}\left(T\right)^{2}}{2}
  e^{\lambda T}\right)\varphi dx
  +\int_{0}^{T}\int_{\Gamma}\dfrac{\lambda}{2}(\overline{v}^{+})^{2}e^{\lambda t}\varphi dxdt\nonumber \\
  & +\int_{0}^{T}\int_{\Gamma} \mu (\partial\overline{v}^{+})^{2}\varphi e^{\lambda t}dxdt
  +\int_{0}^{T}\int_{\Gamma} \mu \partial\overline{v}^{+}\overline{v}^{+}\partial\varphi e^{\lambda t}dxdt\nonumber \\
 & +\int_{0}^{T}\int_{\Gamma}\left[H\left(x,\partial v\right)-H\left(x,\partial\hat{v}\right)\right]\overline{v}^{+}\varphi e^{\lambda t}dxdt = 0. 
\end{align*}
From (\ref{eq:18}),
$\left|H\left(x,\partial v\right)-H\left(x,\partial\hat{v}\right)\right|\le C_0|\partial\overline{v}|$.
Hence,  since $\overline{v}^{+}\left(T\right)=0$ and
$|\partial\overline{v}|\overline{v}^{+}=|\partial\overline{v}^{+}|\overline{v}^{+}$ almost everywhere, we get
\begin{align}
  \int_{0}^{T}\int_{\Gamma}\left(\dfrac{\lambda}{2}(\overline{v}^{+})^{2}
  +\mu (\partial\overline{v}^{+})^{2}\right)e^{\lambda t}\varphi dxdt
  -\int_{0}^{T}\int_{\Gamma}\left(\mu \left|\partial\varphi\right|+C_0\varphi\right)
  |\partial\overline{v}^{+}|\overline{v}^{+}e^{\lambda t}dxdt\le0.
  \label{eq: contradiction comparison HJ}
\end{align}
For $\lambda$ large enough, the first term in the left hand side is not smaller than the second term.
This implies that   $\overline{v}^{+} = 0$.
\end{proof}

Now we prove Theorem~\ref{thm: existence and uniqueness of HJ equation}.
We start with a bounded Hamiltonian  $H$.

\begin{proof}[Proof of existence in Theorem~\ref{thm: existence and uniqueness of HJ equation} when $H$ is bounded by $C_{H}$]
Take $\overline{v}\in L^{2}\left(0,T;V\right)$ and $f\in L^{2}\left(\Gamma\times (0,T)\right)$.
From Theorem~\ref{thm: existence and uniqueness linear equation}
and Theorem~\ref{thm: regularity for v} with $h=f-H\left(x,\partial\overline{v}\right)$ and $v_{T}\in V$, 
the following boundary value problem
\begin{equation}
\begin{cases}
-\partial_{t}v-\mu_{\alpha}\partial^{2}v=f-H\left(x,\partial\overline{v}\right), & \text{in }\left(\Gamma_{\alpha}\backslash\mathcal{V}\right)\times\left(0,T\right),~\alpha\in \cA,\\
v|_{\Gamma_{\alpha}}\left(\nu_{i},t\right)=v|_{\Gamma_{\beta}}\left(\nu_{i},t\right), & t\in\left(0,T\right),~\alpha,\beta\in\mathcal{A}_{i},~\nu_i\in\mathcal{A}_i,\\
\sum_{\alpha\in\mathcal{A}_{i}}\gamma_{i\alpha}\mu_{\alpha}\partial_{\alpha}v\left(\nu_{i},t\right)=0, & t\in\times\left(0,T\right),~\nu_{i}\in\mathcal{V},\\
v\left(x,T\right)=v_{T}\left(x\right), & x\in\Gamma,
\end{cases}\label{eq: fixed point}
\end{equation}
has a unique weak solution $v\in L^{2}\left(0,T;H^{2}\left(\Gamma\right)\right)\cap C([0,T];V) \cap W^{1,2}\left(0,T;L^{2}\left(\Gamma\right)\right)$.
This allows us to define the map $T$:
\begin{align*}
T:L^{2}\left(0,T;V\right) & \longrightarrow L^{2}\left(0,T;V\right),\\
\overline{v} & \longmapsto v.
\end{align*}
From (\ref{eq:18}),  $\overline{v}\longmapsto H\left(x,\partial\overline{v}\right)$
is continuous from $L^{2}\left(0,T;V\right)$ into $L^{2}\left(\Gamma\times(0,T)\right)$.
Using again Theorem \ref{thm: regularity for v}, we have that $T$
is continuous from $L^{2}\left(0,T;V\right)$ to $L^{2}\left(0,T;V\right)$.
Moreover, there exists a constant $C$ depending only on $C_{H}$,$\Gamma$,$(\mu_{\alpha})_{\alpha\in \cA}$, $f$, $T$, $\varphi$
and $v_{T}$ such that
\begin{equation}
\left\Vert \partial_{t}v\right\Vert _{L^{2}\left(\Gamma\times(0,T)\right)}+\left\Vert v\right\Vert _{L^{2}\left(0,T;H^{2}\left(\Gamma\right)\right)}\le C.\label{eq: uniformly bounded of truncation}
\end{equation}
Therefore, from Aubin-Lions theorem (see Lemma \ref{aubin-lions-lem}), we obtain that
$T\left(L^{2}\left(0,T;V\right)\right)$ is relatively compact in $L^{2}\left(0,T;V\right)$.
By  Schauder fixed point theorem, see \cite[Corollary 11.2]{GT2001},
$T$ admits a fixed point which is a weak solution of \eqref{eq: H-J}.
\end{proof}

\begin{proof}[Proof of existence in Theorem~\ref{thm: existence and uniqueness of HJ equation} in the general case]
Now we truncate the Hamiltonian as follows
\[
H_{n}\left(x,p\right)=\begin{cases}
H\left(x,p\right) & \text{if }\left|p\right|\le n,\\
H\left(x,\dfrac{p}{\left|p\right|}n\right) & \text{if }\left|p\right|>n.
\end{cases}
\]
From the previous proof for bounded Hamiltonians, for all $n$, there exists a solution
$v_{n}\in L^{2}\left(0,T;H^{2}\left(\Gamma\right)\right)\cap C([0,T];V)\cap W^{1,2}\left(0,T;L^{2}\left(\Gamma\right)\right)$
of \eqref{eq: H-J}, where $H$ is replaced by $H_{n}$.
We propose to send $n$ to $+\infty$ and to show a subsequence of
$\left\{ v_{n}\right\} $ converges to a solution of \eqref{eq: H-J}.
Hence, we need some uniform estimates for $\left\{ v_{n}\right\} $.
As in the proof of Proposition~\ref{thm: comparison principle HJ}, using $-v_{n}e^{\lambda t}\varphi$ as a test-function, integrating from $0$ to $T$ and noticing that $H$ is sublinear, see (\ref{eq:17}), we obtain
\begin{eqnarray*}
  && \int_{\Gamma}\left[\dfrac{v_{n}^{2}\left(x,0\right)}{2}-\dfrac{v_{n}^{2}\left(x,T\right)}{2}e^{\lambda T}\right]\varphi dx+\int_{0}^{T}\int_{\Gamma}\left[\frac{\lambda}{2}v_{n}^{2}e^{\lambda t}\varphi +\mu\left|\partial v_{n}\right|^{2}e^{\lambda t}\varphi+\mu\partial v_{n}v_{n}e^{\lambda t}\partial\varphi \right] dxdt\nonumber\\
  &=&
   - \int_{0}^{T}\int_{\Gamma}H_{n}\left(x,\partial v_{n}\right)v_{n}e^{\lambda t}\varphi dxdt
  + \int_{0}^{T}\int_{\Gamma}fv_{n}e^{\lambda t}\varphi dxdt\\
  &\leq&
    C_0\int_{0}^{T}\int_{\Gamma}\left(1+\left|\partial v_{n}\right|\right)\left|v_{n}\right|e^{\lambda t}\varphi dxdt
  + \dfrac{1}{2}\int_{0}^{T}\int_{\Gamma}f^2e^{\lambda t}\varphi dxdt 
  + \dfrac{1}{2}\int_{0}^{T}\int_{\Gamma}v^{2}_{n}e^{\lambda t}\varphi dxdt.
\end{eqnarray*}
In the following lines, the constant $C$ above will vary from
line to line and will  depend only on $(\mu_\alpha)_{\alpha\in \cA}, C_H, T$ and $\varphi$.
Taking $\lambda$ large enough leads to the following estimate:
\begin{equation}
\left\Vert v_{n}\right\Vert _{L^{2}\left(0,T;V\right)}\le C\left(\left\Vert f\right\Vert _{L^{2}\left(0,T;L^{2}\left(\Gamma\right)\right)}+\left\Vert v_{T}\right\Vert _{L^{2}\left(\Gamma\right)}+1\right),\label{eq: estimate truncate v_n}
\end{equation}
and thus, from (\ref{eq:17}) again, we also obtain
\begin{align*}
\int_{0}^{T}\int_{\Gamma}\left|H_{n}\left(x,\partial v_{n}\right)\right|^{2}dxdt & \le\int_{0}^{T}\int_{\Gamma}C_0^{2}\left(\left|\partial v_{n}\right|+1\right)^{2}dxdt \le\int_{0}^{T}\int_{\Gamma}2C_0^{2}\left(\left|\partial v_{n}\right|^{2}+1\right)dxdt\\
& \le C\left(\left\Vert f\right\Vert _{L^{2}\left(0,T;L^{2}\left(\Gamma\right)\right)}^2
+\left\Vert v_{T}\right\Vert _{L^{2}\left(\Gamma\right)}^2+1\right).
\end{align*}

Therefore, $\left\{ H_{n}\left(x,\partial v_{n}\right)-f\right\} $
is uniformly bounded in $L^{2}\left(0,T;L^{2}\left(\Gamma\right)\right)$.
From Theorem~\ref{thm: regularity for v}, we obtain that $\left( v_{n}\right) _{n\in\mathbb{N}}$
is uniformly bounded in $L^{2}\left(0,T;H^{2}\left(\Gamma\right)\right)\cap C([0,T];V)\cap W^{1,2}\left(0,T;L^{2}\left(\Gamma\right)\right)$.
By the Aubin-Lions theorem (see Lemma \ref{aubin-lions-lem}), $(v_n)_n$ is relatively compact in $L^{2}\left(0,T;V\right)$ 
(and bounded in $C\left([0,T];V\right)$).
 Hence, up
to the extraction of a subsequence, there exists $v\in L^{2}\left(0,T;V\right)\cap W^{1,2}\left(0,T;L^{2}\left(\Gamma\right)\right)$
such that
\begin{equation}
v_{n}  \rightarrow v,\quad\text{in }L^{2}\left(0,T;V\right)\quad  \hbox{(strongly)},\quad \quad \partial_{t}v_{n} \rightharpoonup\partial_{t}v,\quad\text{in } L^{2}(\Gamma\times(0,T)) \quad  \hbox{(weakly)}.
\end{equation}
Hence, $H_{n}\left(x,\partial v_{n}\right)\to H\left(x,\partial v\right)$ 
a.e. in $\Gamma\times (0,T)$. 
Note also that we can apply Lebesgue dominated convergence theorem to 
$H_{n}\left(x,\partial v_{n}\right)$  because $H_{n}\left(x,\partial v_{n}\right) \le H \left(x,\partial v_{n}\right)
\le C_0 (1+ |\partial v_n|)$. Therefore, $ H_{n}\left(x,\partial v_{n}\right)\to H\left(x,\partial v\right)$ in $L^2 (\Gamma\times(0,T))$. Thus, it is possible to pass to the limit in the weak formulation satisfied by $v_n$ and obtain that 
for all $w\in W$, $\chi \in C_c (0,T)$,
\[\int_0^T \chi(t)\left( 
-\int_{\Gamma}\partial_{t}vwdx+\int_{\Gamma}\partial v\partial wdx+\int_{\Gamma}H\left(x,\partial v\right)wdx\right) dt=
\int_0^T \chi(t) \left(\int_{\Gamma}fwdx \right) dt.
\]
Therefore, $v$ satisfies the first line in~\eqref{eq:23}.

From Theorem~\ref{thm: existence and uniqueness linear equation},
$v_{n}\left(T\right)=v_{T}$ for all $n$.
Since for all $\alpha\in \cA$,  $(v_n)_n$ tends to $v$ in $L^{2}(\Gamma_\alpha \times (0,T))$ strongly and in
$W^{1,2}(\Gamma_\alpha \times (0,T))$ weakly,
 $v_n|_{\Gamma_\alpha \times \{t=T\}} $ converges to $v|_{\Gamma_\alpha \times \{t=T\}} $
in $L^2 (\Gamma_\alpha)$ strongly. Passing to the limit in the latter identity, we get the second
condition in~\eqref{eq:23}.
We have proven that $v$ is a weak solution of~\eqref{eq: H-J}. 
\end{proof}

We end the section with a stability result for the Hamilton-Jacobi equation.

\begin{lem}\label{lem: stability HJ}
Let $(v_{T\varepsilon})_\varepsilon,(f_{\varepsilon})_\varepsilon$ be sequences of functions
satisfying
\[
v_{T\varepsilon}\longrightarrow v_{T}\text{ in }V,\qquad f_{\varepsilon}\longrightarrow f\text{ in }L^{2}\left(\Gamma\times(0,T)\right).
\]
Let $ v_{\varepsilon} $ be the weak solution of \eqref{eq: H-J}
with data $v_{T\varepsilon},f_{\varepsilon}$, then $(v_{\varepsilon})_\varepsilon$
converges in $ L^{2}\left(0,T;H^2(\Gamma)\right)\cap C([0,T];V) \cap W^{1,2}\left(0,T;L^{2}\left(\Gamma\right)\right)$ to the weak solution $v$ of \eqref{eq: H-J} with data $v_{T},f$.
\end{lem}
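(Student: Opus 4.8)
The plan is to combine the a priori bounds and compactness already used in the existence proof of Theorem~\ref{thm: existence and uniqueness of HJ equation} with a single application of the linear energy estimate of Theorem~\ref{thm: regularity for v} to the difference $v_\varepsilon-v$. First I would record uniform bounds: since $v_{T\varepsilon}\to v_T$ in $V$ and $f_\varepsilon\to f$ in $L^2(\Gamma\times(0,T))$, the data are bounded, so estimate~\eqref{eq: estimate truncate v_n} together with Theorem~\ref{thm: regularity for v} shows that $(v_\varepsilon)_\varepsilon$ is uniformly bounded in $\mathcal{X}:=L^2(0,T;H^2(\Gamma))\cap C([0,T];V)\cap W^{1,2}(0,T;L^2(\Gamma))$. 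By the Aubin--Lions theorem (Lemma~\ref{aubin-lions-lem}), $(v_\varepsilon)_\varepsilon$ is then relatively compact in $L^2(0,T;V)$; thus, up to a subsequence, $v_\varepsilon\to\tilde v$ strongly in $L^2(0,T;V)$, weakly in $L^2(0,T;H^2(\Gamma))$, with $\partial_t v_\varepsilon\rightharpoonup\partial_t\tilde v$ weakly in $L^2(\Gamma\times(0,T))$. Because the convergence in $L^2(0,T;V)$ is strong, $\partial v_\varepsilon\to\partial\tilde v$ in $L^2(\Gamma\times(0,T))$, and the Lipschitz bound~\eqref{eq:18} gives $\|H(\cdot,\partial v_\varepsilon)-H(\cdot,\partial\tilde v)\|_{L^2(\Gamma\times(0,T))}\le C_0\|\partial v_\varepsilon-\partial\tilde v\|_{L^2(\Gamma\times(0,T))}\to0$. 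Passing to the limit in the weak formulation~\eqref{eq:23} and in the terminal condition (using $v_\varepsilon|_{t=T}=v_{T\varepsilon}\to v_T$ in $L^2$), one finds that $\tilde v$ is a weak solution of~\eqref{eq: H-J} with data $v_T,f$. By the uniqueness part of Theorem~\ref{thm: existence and uniqueness of HJ equation}, $\tilde v=v$; since the limit is independent of the extracted subsequence, the whole sequence converges, $v_\varepsilon\to v$ in $L^2(0,T;V)$.

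To upgrade this to convergence in the full norm of $\mathcal{X}$, I would set $w_\varepsilon:=v_\varepsilon-v$. Subtracting the two weak formulations~\eqref{eq:23}, $w_\varepsilon$ is a weak solution of the linear problem~\eqref{eq: linear 1} with terminal datum $v_{T\varepsilon}-v_T\in V$ and source
\[
h_\varepsilon:=(f_\varepsilon-f)-\bigl(H(\cdot,\partial v_\varepsilon)-H(\cdot,\partial v)\bigr)\in L^2(\Gamma\times(0,T)).
\]
By~\eqref{eq:18}, $\|H(\cdot,\partial v_\varepsilon)-H(\cdot,\partial v)\|_{L^2(\Gamma\times(0,T))}\le C_0\|\partial w_\varepsilon\|_{L^2(\Gamma\times(0,T))}$, which tends to $0$ by the previous step, whence $\|h_\varepsilon\|_{L^2(\Gamma\times(0,T))}\to0$. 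Applying the energy estimate~\eqref{meilleure-esti-v} of Theorem~\ref{thm: regularity for v} to $w_\varepsilon$ yields
\[
\|w_\varepsilon\|_{L^\infty(0,T;V)}+\|w_\varepsilon\|_{L^2(0,T;H^2(\Gamma))}+\|\partial_t w_\varepsilon\|_{L^2(\Gamma\times(0,T))}\le C\bigl(\|h_\varepsilon\|_{L^2(\Gamma\times(0,T))}+\|v_{T\varepsilon}-v_T\|_V\bigr),
\]
and the right-hand side tends to $0$. This is precisely the asserted convergence of $v_\varepsilon$ to $v$ in $L^2(0,T;H^2(\Gamma))\cap C([0,T];V)\cap W^{1,2}(0,T;L^2(\Gamma))$.

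The only genuinely nonlinear difficulty is controlling $H(\cdot,\partial v_\varepsilon)-H(\cdot,\partial v)$, which cannot be handled by weak convergence alone. The compactness step is what makes the argument work: it upgrades the weak convergence of $\partial v_\varepsilon$ to strong convergence in $L^2(\Gamma\times(0,T))$, after which the globally Lipschitz assumption~\eqref{eq:18} on $H$ in $p$ turns this nonlinear difference into a quantity tending to $0$ in $L^2$. Once this is available the problem is effectively linear and Theorem~\ref{thm: regularity for v} closes the estimate in one stroke. I note that trying to bound $w_\varepsilon$ directly from the linear estimate without the compactness step would leave the term $C_0\|\partial w_\varepsilon\|_{L^2(\Gamma\times(0,T))}$ on the right-hand side, which cannot be absorbed since $C_0$ need not be small; hence the two-stage structure (compactness first, linear estimate second) is essential.
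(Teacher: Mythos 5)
Your proof is correct, and its second stage (rewriting $w_\varepsilon=v_\varepsilon-v$ as a solution of the linear problem \eqref{eq: linear 1} with source $h_\varepsilon$, noting $\|h_\varepsilon\|_{L^2}\to 0$ by the Lipschitz bound \eqref{eq:18}, and invoking \eqref{meilleure-esti-v}) is essentially what the paper does to upgrade the convergence. Where you genuinely diverge is in the first stage. The paper obtains the convergence $v_\varepsilon\to v$ in $L^{2}(0,T;V)$ \emph{directly}: it subtracts the two equations, tests with $(v_\varepsilon-v)e^{\lambda t}\varphi$, and runs the same weighted energy computation as in the comparison principle (Proposition~\ref{thm: comparison principle HJ}). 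The nonlinear term there appears as $\int_\Gamma\bigl(H(x,\partial v_\varepsilon)-H(x,\partial v)\bigr)(v_\varepsilon-v)e^{\lambda t}\varphi\,dx$, bounded by $C_0\int_\Gamma|\partial w_\varepsilon|\,|w_\varepsilon|e^{\lambda t}\varphi\,dx$, and Young's inequality splits this into a piece absorbed by $\int\mu|\partial w_\varepsilon|^2 e^{\lambda t}\varphi$ and a piece absorbed by $\frac{\lambda}{2}\int w_\varepsilon^2 e^{\lambda t}\varphi$ for $\lambda$ large. This yields the quantitative estimate $\|v_\varepsilon-v\|_{L^2(0,T;V)}\le C\bigl(\|f_\varepsilon-f\|_{L^2(\Gamma\times(0,T))}+\|v_{T\varepsilon}-v_T\|_{L^2(\Gamma)}\bigr)$. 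You instead get the same first-stage conclusion qualitatively, via uniform bounds, Aubin--Lions compactness, identification of the limit, and uniqueness; that is perfectly valid but gives no rate. Your closing remark is therefore overstated: it is true that the $H^2$-level estimate of Theorem~\ref{thm: regularity for v} cannot absorb $C\,C_0\|\partial w_\varepsilon\|_{L^2}$ on its own, so a two-stage argument is needed, but the first stage does not require compactness --- the direct Gronwall-type estimate with the $e^{\lambda t}$ weight handles the Lipschitz term at the $V$ level, which is precisely the paper's (shorter and quantitative) route.
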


\begin{proof}[Proof of Lemma~\ref{lem: stability HJ}]
Subtracting the two PDEs for $v_{\varepsilon}$ and $v$,
multiplying  by $\left(v_{\varepsilon}-v\right)e^{\lambda t}\varphi^{-1}$,
taking the integral on $\Gamma\times(0,T)$ and using similar computations as 
in the proof of Proposition~\ref{thm: comparison principle HJ}, we obtain
\[
\left\Vert v_{\varepsilon}-v\right\Vert _{L^{2}\left(0,T;V\right)}\le C\left(\left\Vert f_{\varepsilon}-f\right\Vert _{L^2\left(\Gamma\times(0,T)\right)}+\left\Vert v_{T\varepsilon}-v_{T}\right\Vert _{L^{2}\left(\Gamma\right)}\right),
\]
for $\lambda$ large enough and $C$ independent of $\varepsilon$.  This proves the convergence of $v_\epsilon$ to $v$ in 
$L^{2}\left(0,T;V\right)$. Then, the convergence in  $ L^{2}\left(0,T;H^2(\Gamma)\right)\cap C([0,T];V) \cap W^{1,2}\left(0,T;L^{2}\left(\Gamma\right)\right)$ results from the assumption that $H$ is Lipschitz with respect to its second argument,  and from stability results for the linear boundary value problem (\ref{eq: linear 1}) which are obtained with similar arguments as in the proof of Theorem~\ref{thm: regularity for v}.
\end{proof}

\subsection{Regularity for the Hamilton-Jacobi equation}

In this section, we prove further regularity for the solution of~\eqref{eq: H-J}.

\begin{thm}\label{thm: H-J regularity}
We suppose that the assumptions of Theorem~\ref{thm: existence and uniqueness of HJ equation}
hold and that, in addition, $v_T\in H^2(\Gamma)$ satisfies the Kirchhoff  conditions given by the third equation in (\ref{eq: H-J}),   $f\in  PC(\Gamma\times [0,T])\cap L^2(0,T; H^1_b (\Omega))$ 
and  $\partial_t f\in L^2(0,T;  H^1_b(\Gamma))$.

Then, the unique solution $v$ of \eqref{eq: H-J}
satisfies $v\in L^{2}\left(0,T;H^{3}\left(\Gamma\right)\right)$ and
$\partial_{t}v\in L^{2}\left(0,T;H^{1}\left(\Gamma\right)\right)$.
Moreover, there exists a constant $C$ depending only on $\left\Vert v_{T}\right\Vert _{H^{2}\left(\Gamma\right)}$,
$(\mu_\alpha)_{\alpha\in \cA} $, $H$ and 
$f$ such that
\begin{eqnarray}\label{energy-est-v}
&& \left\Vert v\right\Vert _{L^{2}\left(0,T;H^{3}\left(\Gamma\right)\right)}+\left\Vert \partial_{t}v\right\Vert _{L^{2}\left(0,T;H^{1}\left(\Gamma\right)\right)}\le C.
\end{eqnarray}
If, in addition,  there exists $\eta\in (0,1)$ such  that $v_T\in C^{2+\eta}(\Gamma)$
then  there exists $\tau\in (0,1)$ such $v\in C^{2+\tau,1+\frac{\tau}{2}}(\Gamma\times [0,T])$, and $v$ is a classical solution of (\ref{eq: H-J}).
\end{thm}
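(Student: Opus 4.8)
The plan is to upgrade the weak solution in two stages: first the Sobolev regularity giving \eqref{energy-est-v}, then, under the additional Hölder hypothesis on $v_T$, the classical regularity. The guiding idea throughout is to differentiate \eqref{eq: H-J} in time and treat the resulting equation as a \emph{linear} Kirchhoff problem of the type analysed in Section~\ref{subsec: The-linear-parabolic}. Since a priori only $\partial_t v\in L^2(\Gamma\times(0,T))$, I would argue rigorously with time difference quotients $\tau_h v(\cdot,t):=h^{-1}\bigl(v(\cdot,t+h)-v(\cdot,t)\bigr)$. Subtracting the weak formulations at times $t+h$ and $t$ and dividing by $h$, the function $w_h:=\tau_h v$ — which still satisfies the continuity and Kirchhoff conditions, being a difference of functions that do — solves
\[
-\partial_t w_h-\mu_\alpha\partial^2 w_h+b_h\,\partial w_h=\tau_h f,
\qquad b_h(x,t):=\int_0^1\partial_p H\bigl(x,\partial v(x,t)+s\,h\,\partial w_h(x,t)\bigr)\,ds ,
\]
with $|b_h|\le C_0$ by \eqref{eq:18}. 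This is exactly the linear problem of Section~\ref{subsec: The-linear-parabolic} up to the bounded first-order perturbation $b_h\partial w_h$, which is absorbed in the energy estimates of Lemma~\ref{lem: Energy estimate} and Theorem~\ref{thm: regularity for v} by Young's inequality and a large parameter $\lambda$.

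\emph{Uniform estimates and bootstrap.} Because $\partial_t f\in L^2(0,T;H^1_b(\Gamma))\subset L^2(0,T;W')$ and the terminal value is controlled, through the equation at $t=T$, by $-\mu_\alpha\partial^2 v_T+H(\cdot,\partial v_T)-f(\cdot,T)\in L^2(\Gamma)$ (here $v_T\in H^2(\Gamma)$ together with its Kirchhoff compatibility, and $f\in C([0,T];H^1_b)$, are used), Theorem~\ref{thm: existence and uniqueness linear equation} yields a bound on $w_h$ in $L^2(0,T;V)\cap C([0,T];L^2(\Gamma))$ uniform in $h$. Letting $h\to0$ gives $\partial_t v\in L^2(0,T;H^1(\Gamma))$. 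I would then bootstrap through the equation rewritten as $\mu_\alpha\partial^2 v=H(x,\partial v)-f-\partial_t v$: the right-hand side lies in $L^2(0,T;H^1_b(\Gamma))$, since $\partial_x\bigl[H(x,\partial v)\bigr]=\partial_x H+\partial_p H\,\partial^2 v\in L^2$ by \eqref{eq:18}--\eqref{eq:19} and $v\in L^2(0,T;H^2(\Gamma))$, while $f$ and $\partial_t v$ are already in $L^2(0,T;H^1_b)$. Hence $\partial^2 v\in L^2(0,T;H^1_b(\Gamma))$, and since $v$ is continuous at the vertices, $v\in L^2(0,T;H^3(\Gamma))$; collecting the bounds produces \eqref{energy-est-v}.

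\emph{Classical regularity.} Assume in addition $v_T\in C^{2+\eta}(\Gamma)$. From the previous step $\partial v\in L^2(0,T;H^2_b(\Gamma))\cap W^{1,2}(0,T;L^2(\Gamma))$, its time derivative being $\partial(\partial_t v)$, so the space-time embeddings of Appendix~\ref{app:embeddings} give $\partial v\in C^{\tau,\tau/2}(\Gamma\times[0,T])$ for some $\tau\in(0,1)$; the same assumptions make $f$ Hölder continuous in space-time on each edge. Using that $H^\alpha\in C^1$ (see \eqref{eq: Hamiltonian}), the source $g:=f-H(\cdot,\partial v)$ is then Hölder continuous on each edge. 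It remains to apply Schauder theory to the linear parabolic Kirchhoff problem $-\partial_t v-\mu_\alpha\partial^2 v=g$ with terminal datum $v_T\in C^{2+\eta}(\Gamma)$ satisfying the Kirchhoff conditions of \eqref{eq: H-J}; choosing $\tau\le\eta$, one obtains $v\in C^{2+\tau,1+\tau/2}(\Gamma\times[0,T])$, and the equation then holds pointwise.

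\emph{Main obstacle.} The delicate step is this last Schauder estimate. Interior parabolic Schauder estimates apply on each open edge, but the behaviour across a vertex $\nu_i$ — governed by the continuity of $v$ together with the general Kirchhoff condition $\sum_{\alpha\in\cA_i}\gamma_{i\alpha}\mu_\alpha\partial_\alpha v(\nu_i,t)=0$ — is a transmission problem for which ready-made Hölder estimates are not available. I would localise near each vertex, using the normalisation \eqref{oriented networks} so that all adjacent edges are oriented outward, and establish the boundary Schauder estimate for this Kirchhoff transmission problem directly (for instance by a straightening/reflection argument reducing to a star-shaped model problem, or by exploiting the weighted eigenbasis $(\mathsf{v}_k)$ adapted to $\varphi$). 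A secondary, more routine technical point is the rigorous justification of the time differentiation, which the uniform-in-$h$ difference-quotient estimates above settle once the terminal values are shown to be bounded in $L^2(\Gamma)$.
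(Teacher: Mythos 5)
Your route to the Sobolev estimate \eqref{energy-est-v} is genuinely different from the paper's. You differentiate in \emph{time} and bootstrap through the equation, whereas the paper differentiates in \emph{space}: it sets up a new nonlinear boundary value problem \eqref{eq: regularity for H-J} for $u=\partial v$, with a Dirichlet-type vertex condition $\sum_{\alpha\in\cA_i}\gamma_{i\alpha}\mu_\alpha n_{i\alpha}u|_{\Gamma_\alpha}(\nu_i,t)=0$ inherited from the Kirchhoff condition and a Robin-type transmission condition inherited from the continuity of $v$, solves it by a separate Galerkin scheme in the spaces $F$ and $E$ of Definition~\ref{def: functional spaces regularity}, and then proves (via the path-independence Lemma~\ref{cor: well-defined of u} and Lemma~\ref{thm: recover v}) that its solution is indeed $\partial v$. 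Your approach, if it closes, is lighter; the paper's buys, in addition, the strong form of the transmission conditions for $\partial v$ at the vertices and the bound $u\in C([0,T];F)$, which it reuses. The one place where your argument is circular as written is the terminal value: to bound $h^{-1}(v(\cdot,T)-v(\cdot,T-h))$ in $L^2(\Gamma)$ uniformly in $h$ you invoke ``$\partial_t v(\cdot,T)=-\mu_\alpha\partial^2 v_T+H(\cdot,\partial v_T)-f(\cdot,T)$'', but the existence of $\partial_t v(\cdot,T)$ in $L^2(\Gamma)$ is part of what is being proved; Cauchy--Schwarz only gives $h^{-1/2}\Vert\partial_t v\Vert_{L^2(\Gamma\times(T-h,T))}$, which does not close. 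The standard repair is to run the time-derivative estimate at the Galerkin level, where $\dot Y(T)$ is computed from the ODE system and bounded using precisely the hypothesis that $v_T\in H^2(\Gamma)$ satisfies the Kirchhoff compatibility; this is a real (if routine) piece of work that your sketch defers.

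On the classical regularity, you and the paper follow the same scheme (interpolation gives $\partial v\in C^{\tau,\tau/2}$, hence a H\"older source $g=f-H(\cdot,\partial v)$ for the linear Kirchhoff heat problem with datum $v_T\in C^{2+\eta}$), but your ``main obstacle'' --- the Schauder estimate for the linear parabolic transmission problem with general Kirchhoff conditions --- is exactly what the paper resolves by citing von Below's theorem on classical solvability of linear parabolic equations on networks \cite{Below1988}. Your claim that no ready-made estimate is available is therefore incorrect, and your proposed substitutes (reflection/straightening near a vertex, or the weighted eigenbasis) are only sketched and would be delicate for unequal $\mu_\alpha$ and general weights $\gamma_{i\alpha}$. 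As written, this final step is a gap; it is closable by citation rather than by new argument, but it must be closed.
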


The main idea to prove Theorem~\ref{thm: H-J regularity} is to differentiate~\eqref{eq: H-J} with respect to the space
variable and to prove some regularity properties for the derived equation. Let us explain formally our
method. Assuming the solution $v$ of~\eqref{eq: H-J} is in $C^{2,1}(\Gamma\times(0,T))$
and taking the space-derivative of~\eqref{eq: H-J}
on $\left(\Gamma_{\alpha}\backslash\mathcal{V}\right)\times\left(0,T\right)$, we have
\[
-\partial_t \partial v - \mu_\alpha \partial^3 v + \partial \left( H (x,\partial v) \right) = \partial f . 
\]
Therefore, $u=\partial v$ satisfies the following PDE
\[
-\partial_{t}u-\mu_{\alpha}\partial^{2}u+\partial \left( H \left(x,u\right)\right)=\partial f,
\]
with terminal condition $u(x,T)=\partial v_T(x)$.
From the Kirchhoff conditions in~\eqref{eq: H-J} and Remark~\ref{rem:new network}, we
obtain a condition for $u$ of Dirichlet type, namely 
\[
\sum_{\alpha \in \mathcal{A}_i}\mu_{\alpha}\gamma_{i \alpha}  n_{i\alpha} u|_{\Gamma_{\alpha}} (\nu_i,t)=0,\quad t \in (0,T),~\nu_i \in \mathcal{V}.
\]
Note that the latter condition is an homogeneous Dirichlet condition at the boundary vertices of $\Gamma$.

Now, by extending continuously the PDEs in~\eqref{eq: H-J} until the vertex $\nu_i$
in the branchs $\Gamma_{\alpha}$ and $\Gamma_{\beta}$, $\alpha,\beta \in \mathcal{A}_i$,
and using the continuity condition in~\eqref{eq: H-J}, one gets
\begin{eqnarray*}
-\mu_{\alpha} \partial^{2} v|_{\Gamma_{\alpha}} + H^{\alpha}\left(\nu_i,\partial v|_{\Gamma_{\alpha}}(\nu_i,t)\right)-f|_{\Gamma_{\alpha}} (\nu_i,t) = -\mu_{\beta} \partial^{2} v|_{\Gamma_{\beta}} + H^\beta (\nu_i,\partial v|_{\Gamma_{\beta}}(\nu_i,t) )-f|_{\Gamma_{\beta}} (\nu_i,t).
\end{eqnarray*}
This gives a second transmission condition  for $u$ at $\nu_i\in \cV\backslash {\partial \Gamma}$  of Robin type, namely
\begin{equation}
  \label{eq:25}
  \begin{split}
&
\mu_{\alpha}\partial u|_{\Gamma_{\alpha}}\left(\nu_{i},t\right)-H^\alpha(\nu_{i},u|_{\Gamma_{\alpha}}(\nu_{i},t))+f|_{\Gamma_{\alpha}}\left(\nu_{i},t\right)
\\ =&\mu_{\beta}\partial u|_{\Gamma_{\beta}}\left(\nu_{i},t\right)-H^\beta(\nu_{i},u|_{\Gamma_{\beta}}(\nu_{i},t))+f|_{\Gamma_{\beta}}(\nu_{i},t),    
  \end{split}
\end{equation}
which is equivalent to 
\begin{equation}
  \label{eq:26}
  \begin{split}
&\mu_{\alpha}  n_{i\alpha} \partial_\alpha u \left(\nu_{i},t\right)-H^\alpha(\nu_{i},u|_{\Gamma_{\alpha}}(\nu_{i},t))+f|_{\Gamma_{\alpha}}\left(\nu_{i},t\right)
\\ =& \mu_{\beta} n_{i\beta} \partial_\beta  u \left(\nu_{i},t\right)-H^\beta(\nu_{i},u|_{\Gamma_{\beta}}(\nu_{i},t))+f|_{\Gamma_{\beta}}(\nu_{i},t).
  \end{split}
\end{equation}

Hence, we shall study the following nonlinear boundary value problem for $u=\partial v$,
\begin{equation}
\begin{cases}
-\partial_{t}u-\mu_{\alpha}\partial^{2}u+\partial \left(H\left(x,u\right)\right)=\partial f\left(x,t\right), & (x,t)\in \left(\Gamma_{\alpha}\backslash\mathcal{V}\right)\times\left(0,T\right),~\alpha\in \cA,\\
{\displaystyle \sum_{\alpha\in\mathcal{A}_{i}}\gamma_{i\alpha}\mu_{\alpha} n_{i\alpha}   u|_{\Gamma_{\alpha}}\left(\nu_{i},t\right)=0,} & t\in\left(0,T\right),~\nu_{i}\in\mathcal{V},\\
\mu_{\alpha}n_{i\alpha} \partial_\alpha u\left(\nu_{i},t\right)-H^\alpha(\nu_{i},u|_{\Gamma_{\alpha}}(\nu_{i},t))
+f|_{\Gamma_{\alpha}}\left(\nu_{i},t\right)\\
\hspace*{0.4cm}=\mu_{\beta}  n_{i\beta} \partial_\beta  u \left(\nu_{i},t\right)
-H^\beta(\nu_{i},u|_{\Gamma_{\beta}}(\nu_{i},t))+f|_{\Gamma_{\beta}}\left(\nu_{i},t\right), & t\in\left(0,T\right),~\alpha,\beta\in\mathcal{A}_{i},~\nu_{i}\in\mathcal{V}\backslash\partial\Gamma,\\
u\left(x,T\right)=u_{T}\left(x\right), & x\in\Gamma,
\end{cases}\label{eq: regularity for H-J}
\end{equation}
where $\partial f\in L^2\left(\Gamma\times(0,T)\right)$ and $u_{T}\in F$ defined in (\ref{eq: space U}) below.
Theorem~\ref{thm: H-J regularity} will follow by choosing  $u_T=\partial v_T$.

In order to define the weak solutions of~\eqref{eq: regularity for H-J}, we need
the following subspaces of $H^{1}_{b}\left(\Gamma\right)$.

\begin{defn}
\label{def: functional spaces regularity} We define the Sobolev spaces
\begin{eqnarray}
  && F:  =\left\{u\in H_{b}^{1}\left(\Gamma\right)\text{ and }\sum_{\alpha\in\mathcal{A}_{i}}\gamma_{i\alpha}\mu_{\alpha} n_{i\alpha} u|_{\Gamma_{\alpha}}\left(\nu_{i}\right)=0\text{ for all }\nu_{i}\in\mathcal{V}\right\},\label{eq: space U}\\
  && E:=\left\{\mathsf{e}\in H_{b}^{1}\left(\Gamma\right)\text{ and }\sum_{\alpha\in\mathcal{A}_{i}}  n_{i\alpha}\mathsf{e}|{}_{\Gamma_{\alpha}}\left(\nu_{i}\right)=0\text{ for all }\nu_{i}\in\mathcal{V}\right\}.\label{eq: space E}
\end{eqnarray}
\end{defn}

\begin{defn}
  \label{sec:regul-hamilt-jacobi}
Let the function $\psi$ be defined as follows:
\begin{equation}
\begin{cases}
\psi_{\alpha}\text{ is affine on }\left(0,\ell_{\alpha}\right),\\
\psi|_{\Gamma_{\alpha}}\left(\nu_{i}\right)=\mu_{\alpha}\gamma_{i\alpha},\text{ if }\nu_i\in \cV\setminus \partial \Gamma,\;\alpha\in\mathcal{A}_{i},\\
\psi\text{ is constant on the edges \ensuremath{\Gamma_{\alpha}} which touch the boundary of \ensuremath{\Gamma}}.
\end{cases}\label{eq:27}
\end{equation}
Note that $\psi$ is positive and bounded. 
The map $f\longmapsto f\psi$
is an isomorphism  from $F$ onto $E$.
\end{defn}

\begin{defn}\label{def882}
A weak solution of~\eqref{eq: regularity for H-J} is a function $u\in  L^{2}\left(0,T;F\right)$ such that 
$ \partial_{t}u\in L^{2}\left(0,T;E'\right)$, and 
\begin{eqnarray}
  \label{eq:28}
\left\{
\begin{array}{l} 
\ds -\langle \partial_{t}u,\mathsf{e}\rangle_{E', E}  +\int_{\Gamma}\Bigl(\mu\partial u\partial\mathsf{e}- \left( H\left(x,u\right) \right) \partial \mathsf{e}\Bigr)dx = -\int_{\Gamma} f \partial\mathsf{e}dx,  \ \hbox{ for all } e\in E, \hbox{ a.a } t\in (0,T),\\
u\left(\cdot,T\right)=u_{T}. 
\end{array}
\right.
\end{eqnarray}
\end{defn}
\begin{rem}
  \label{sec:regul-hamilt-jacobi-1}
Note that if $u$ is regular enough, then~\eqref{eq:28} can also be written
 \begin{equation}
   \label{eq:30}
\begin{split}
& \ds  -\langle \partial_{t}u,\mathsf{e}\rangle_{E', E}  + \int_{\Gamma}\Bigl(\mu\partial u\partial\mathsf{e}+\partial \left( H\left(x,u\right) \right)  \mathsf{e}\Bigr)dx 
 -\sum_{i\in I}\sum_{\alpha\in\mathcal{A}_{i}}n_{i\alpha}\left[H^{{\alpha}}\left(\nu_{i},u|_{\Gamma_{\alpha}}\left(\nu_{i},t\right)\right)-f|_{\Gamma_{\alpha}}\left(\nu_{i},t\right)\right]\mathsf{e}|_{\Gamma_{\alpha}}\left(\nu_{i}\right)
\\ =&\int_{\Gamma} (\partial f) \mathsf{e}dx     \quad\quad \hbox{ for all } e\in E, \hbox{ a.a } t\in (0,T).
\end{split}  
 \end{equation}

\end{rem}

\begin{rem}\label{explication-def-weak-sol}
To explain formally the definition of weak solutions, let us use $\mathsf{e}\in E$ as a test-function in the PDE
in~\eqref{eq: regularity for H-J}. After an integration by parts, we get
\begin{eqnarray*}
  && \int_{\Gamma}\left(-\partial_{t}u\mathsf{e}+\mu\partial u\partial\mathsf{e}+\partial\left(H\left(x,u\right) \right)
 \mathsf{e}\right)dx
  - \sum_{i\in I}\sum_{\alpha\in\mathcal{A}_{i}} n_{i\alpha}\mu_\alpha\partial u|_{\Gamma_\alpha}(\nu_{i}, t) \mathsf{e}|_{\Gamma_{\alpha}}(\nu_{i})
=\int_\Gamma (\partial f) \mathsf{e}dx,
\end{eqnarray*}
where $n_{i\alpha}$ is defined in~\eqref{eq:1}. On the one hand, from
the second transmission condition, there exists a function $c_i: (0,T)\to \R$ such that
$\mu_{\alpha}  \partial u|_{\Gamma_\alpha} \left(\nu_{i},t\right)
-H^{{\alpha}}(\nu_{i},u|_{\Gamma_{\alpha}}(\nu_{i},t)) +f|_{\Gamma_{\alpha}}\left(\nu_{i},t\right)= c_i(t)$
for all $\alpha\in \mathcal{A}_{i}$. 
 It follows that
\begin{eqnarray*}
&& - \sum_{i\in I}\sum_{\alpha\in\mathcal{A}_{i}} n_{i\alpha}\mu_\alpha   \partial u|_{\Gamma_\alpha} (\nu_{i}, t) \mathsf{e}|_{\Gamma_{\alpha}}(\nu_{i})\\
  &=&  - \sum_{i\in I} c_i(t) \sum_{\alpha\in\mathcal{A}_{i}}n_{i\alpha} \mathsf{e}|_{\Gamma_{\alpha}}(\nu_{i})
  +  \sum_{i\in I}\sum_{\alpha\in\mathcal{A}_{i}}n_{i\alpha}
  \left[-H^{{\alpha}}(\nu_{i},u|_{\Gamma_{\alpha}}(\nu_{i},t)) +f|_{\Gamma_{\alpha}}\left(\nu_{i},t\right)\right]
  \mathsf{e}|_{\Gamma_{\alpha}}(\nu_{i})\\
  &=&
   \sum_{i\in I}\sum_{\alpha\in\mathcal{A}_{i}}n_{i\alpha}
  \left[-H^{{\alpha}}(\nu_{i},u|_{\Gamma_{\alpha}}(\nu_{i},t)) +f|_{\Gamma_{\alpha}}\left(\nu_{i},t\right)\right]
  \mathsf{e}|_{\Gamma_{\alpha}}(\nu_{i}),
\end{eqnarray*}
because $ \mathsf{e}\in E$. Then we may use the Remark \ref{sec:regul-hamilt-jacobi-1} and  obtain~\eqref{eq:28}.
\end{rem}  

We start by proving the following result about~\eqref{eq: regularity for H-J} and then
give the proof of Theorem~\ref{thm: H-J regularity}.

\begin{thm}\label{regularite-sys-derive}
Under the running assumptions,
if  $u_T\in F$, $f\in  C(\Gamma\times [0,T])\cap L^2 (0,T; H_b^1(\Gamma)$ 
and  $\partial_t f\in L^2(0,T; H^1_b(\Gamma))$,
then \eqref{eq: regularity for H-J} has a unique weak solution $u$. Moreover, there exists a constant
$C$ depending only on $\Gamma$, $T$, $\psi$, $\left\Vert u_{T}\right\Vert _{F}$,  $\left\Vert \partial f\right\Vert _{L^2\left(\Gamma\times(0,T)\right)}$,
 $\|f\|_{C(\Gamma\times[0,T])}$ 
and $\|\partial_t f \|_{L^2(0,T; H^1_b(\Gamma))  }$ 
 such that
\begin{eqnarray}\label{energ-est}
\left\Vert u\right\Vert _{L^{2}(0,T;H_{b}^{2}\left(\Gamma\right))}+\left\Vert u\right\Vert _{C\left( [0,T];F\right)}+
\left\Vert \partial_{t}u\right\Vert _{L^2\left(\Gamma\times(0,T)\right)}\le C.
\end{eqnarray}
\end{thm}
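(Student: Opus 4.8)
The plan is to transpose the Galerkin construction of Section~\ref{subsec: The-linear-parabolic} from the pair $(V,W)$ and weight $\varphi$ to the pair $(F,E)$ of Definition~\ref{def: functional spaces regularity} and the weight $\psi$ of Definition~\ref{sec:regul-hamilt-jacobi}, treating the first-order term through the global Lipschitz bound~\eqref{eq:18}. First I would produce a spectral basis adapted to $(F,E)$. Since $\psi$ is bounded above and below by positive constants, the symmetric form $b_\psi(u,v):=\int_\Gamma\mu\psi\,\partial u\,\partial v\,dx$ is such that $(u,v)\mapsto(u,v)_{L^2(\Gamma)}+b_\psi(u,v)$ defines on $F$ an inner product equivalent to the one inherited from $H^1_b(\Gamma)$, so Fredholm theory yields a nondecreasing sequence $\lambda_k\to+\infty$ and a Hilbert basis $(\theta_k)_k$ of $L^2(\Gamma)$, total in $F$ and $b_\psi$-orthogonal, with $b_\psi(\theta_k,v)=\lambda_k(\theta_k,v)_{L^2(\Gamma)}$ for all $v\in F$. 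Because the map $f\mapsto f\psi$ is an isomorphism from $F$ onto $E$ (Definition~\ref{sec:regul-hamilt-jacobi}), the family $(\psi\theta_k)_k$ is then total in $E$.

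For existence I would run the Galerkin scheme directly on the nonlinear problem, seeking $u_n(t)=\sum_{k=1}^n y_k^n(t)\theta_k$ with $y_k^n(T)=(u_T,\theta_k)_{L^2(\Gamma)}$ and, for $k\le n$ and a.e.\ $t\in(0,T)$,
\[
-\frac{d}{dt}(u_n,\theta_k\psi)_{L^2(\Gamma)}+\int_\Gamma\mu\,\partial u_n\,\partial(\theta_k\psi)\,dx-\int_\Gamma H(x,u_n)\,\partial(\theta_k\psi)\,dx=-\int_\Gamma f\,\partial(\theta_k\psi)\,dx.
\]
By~\eqref{eq:18} the right-hand side is a Lipschitz function of $Y=(y_1^n,\dots,y_n^n)$, so the Cauchy--Lipschitz theorem gives a solution on all of $[0,T]$. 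Testing with $u_n\psi$ and using the sublinear bound~\eqref{eq:17}, exactly as in the existence proof for~\eqref{eq: H-J}, bounds $\|u_n\|_{L^2(0,T;F)}$ uniformly; Aubin--Lions (Lemma~\ref{aubin-lions-lem}) then gives strong convergence $u_n\to u$ in $L^2(\Gamma\times(0,T))$ along a subsequence, hence $H(x,u_n)\to H(x,u)$ in $L^2(\Gamma\times(0,T))$ by dominated convergence, which lets me pass to the limit in the weak formulation and in the terminal condition as in Theorem~\ref{thm: existence and uniqueness linear equation}. For uniqueness I would subtract two solutions, test the difference $\bar u$ with $\bar u\,\psi\,e^{\lambda t}\in E$, and absorb the term $\int_\Gamma(H(x,u_1)-H(x,u_2))\partial(\bar u\psi)e^{\lambda t}\,dx$, bounded by $C_0\int_\Gamma|\bar u|\,|\partial(\bar u\psi)|\,e^{\lambda t}\,dx$ thanks to~\eqref{eq:18}, into the coercive contribution for $\lambda$ large, concluding $\bar u=0$.

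The delicate point is the higher estimate~\eqref{energ-est}, for which I would test the Galerkin equations with $\partial_t u_n\,\psi\in E$, mimicking the proof of Theorem~\ref{thm: regularity for v}. The linear part produces the time-derivative of $\tfrac12\int_\Gamma\mu(\partial u_n)^2\psi\,dx$ together with $\int_\Gamma(\partial_t u_n)^2\psi\,dx$, the terminal contribution $\int_\Gamma\mu(\partial u_n(T))^2\psi\,dx$ being controlled by $\|u_T\|_F$ through the $b_\psi$-orthogonality of the $\theta_k$, while the source and nonlinearity combine into $\int_\Gamma(H(x,u_n)-f)\,\partial(\partial_t u_n\,\psi)\,dx$. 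The genuinely new difficulty is the mixed space--time derivative hidden in $\int_\Gamma(H(x,u_n)-f)\,\partial_t\partial u_n\,\psi\,dx$: I would integrate it by parts in time over $(s,T)$, transferring the time-derivative onto $H$ and $f$. Both running hypotheses enter here precisely: $\partial_t H(x,u_n)=\partial_p H(x,u_n)\,\partial_t u_n$ is controlled by $C_0|\partial_t u_n|$ by virtue of~\eqref{eq:18}, the endpoint-in-time terms are controlled through $u_n(T)$ (hence $\|u_T\|_F$) and $f(\cdot,T)$ (finite since $f\in C(\Gamma\times[0,T])$), and $\partial_t f\in L^2(0,T;H^1_b(\Gamma))$ keeps $\int_\Gamma\partial_t f\,\partial u_n\,\psi\,dx$ integrable. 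Choosing $\lambda$ large and using~\eqref{eq:19} to bound $\partial_x H$, I would obtain uniform control of $\partial u_n$ in $L^\infty(0,T;L^2(\Gamma))$ and of $\partial_t u_n$ in $L^2(\Gamma\times(0,T))$; reinserting these into $-\mu_\alpha\partial^2 u_n=\partial f-\partial(H(x,u_n))+\partial_t u_n$ and using~\eqref{eq:19} once more yields $u_n$ bounded in $L^2(0,T;H^2_b(\Gamma))$, and the continuity $u\in C([0,T];F)$ follows from \cite[Theorem 3.1]{lm68} as before. I expect this time-integration-by-parts step---where the nonlinear Robin transmission condition at the transition vertices, encoded weakly through $E$ and $F$, must be handled without an \emph{a priori} trace control on $\partial_t u_n$---to be the main obstacle, precisely because no classical regularity theory for general Kirchhoff conditions is available.
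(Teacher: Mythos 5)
Your proposal is correct and shares the paper's overall architecture (Galerkin scheme adapted to the pair $(F,E)$ with the weight $\psi$, a first energy estimate by testing with $u_n\psi e^{\lambda t}$, a second one by testing with $\partial_t u_n\psi e^{\lambda t}$, then Aubin--Lions and a Gronwall-type uniqueness argument), but it handles the crucial second estimate by a genuinely different manipulation. The paper first integrates by parts \emph{in space}: the interior term $\int_\Gamma \partial\bigl(H(x,u_n)\bigr)\partial_t u_n\psi\,dx$ is then directly controlled via \eqref{eq:18}--\eqref{eq:19}, but this creates vertex terms $\sum_{i,\alpha} n_{i\alpha}\bigl[H^\alpha(\nu_i,u_n|_{\Gamma_\alpha})-f|_{\Gamma_\alpha}\bigr]\partial_t u_n|_{\Gamma_\alpha}(\nu_i,t)\psi|_{\Gamma_\alpha}(\nu_i)$ involving the trace of $\partial_t u_n$, which the paper then tames by a \emph{vertex-wise} integration by parts in time --- introducing the primitives $\mathcal{J}_{i\alpha}$ of $p\mapsto H^\alpha(\nu_i,p)$ and integrating $f|_{\Gamma_\alpha}(\nu_i,\cdot)\partial_t u_n|_{\Gamma_\alpha}(\nu_i,\cdot)$ by parts in $t$ --- combined with the one-dimensional trace inequalities \eqref{pointwise-estim}. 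You instead keep $\int_\Gamma (H(x,u_n)-f)\,\partial(\partial_t u_n\psi)\,dx$ unintegrated in space and perform a single \emph{global} integration by parts in time, using $\partial_t H(x,u_n)=\partial_p H(x,u_n)\partial_t u_n$ and \eqref{eq:18} to absorb the resulting term by Young's inequality, and the Step-1 bounds plus the spectral control of $\partial u_n(\cdot,T)$ for the endpoint terms. This closes the estimate and is arguably cleaner: it never produces traces of $\partial_t u_n$ at the vertices, so the difficulty you flag at the end of your proposal simply does not arise in your route (it is, however, exactly the crux of the paper's proof), and it uses marginally weaker hypotheses on $f$ ($\partial_t f\in L^2(\Gamma\times(0,T))$ and $f\in C([0,T];L^2(\Gamma))$ would suffice, whereas the paper's vertex-wise argument genuinely needs $f\in C(\Gamma\times[0,T])$ and $\partial_t f\in L^2(0,T;H^1_b(\Gamma))$ to make sense of $f|_{\Gamma_\alpha}(\nu_i,t)$ and $\partial_t f|_{\Gamma_\alpha}(\nu_i,\cdot)$). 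The only point to watch is the $\lambda$-dependent term $\lambda\int_s^T\int_\Gamma(H(x,u_n)-f)\partial u_n\psi e^{\lambda t}$ produced by differentiating $e^{\lambda t}$ in your time integration by parts: it must be split by Young's inequality with a small parameter so that the $(\partial u_n)^2$ contribution is dominated by the coercive term $\tfrac{\lambda}{2}\int\mu(\partial u_n)^2\psi e^{\lambda t}$ on the left; with that precaution the argument goes through.
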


\begin{rem}\label{transmission-classical-sense}
Theorem \ref{regularite-sys-derive} implies that  $u (\cdot, t) \in C^{1}\left(\Gamma_\alpha\right)$ 
for all $\alpha\in \mathcal{A}$ for a.e. $t$. Hence, the transmission
conditions for $u$ hold in a classical sense for a.e. $t\in [0,T]$.
\end{rem}

We use the Galerkin's method to construct solutions of certain
finite-dimension approximations to~\eqref{eq: regularity for H-J}.

We notice first that the symmetric bilinear form $\widecheck \cB( u,v ):= \int_\Gamma \mu \psi^{-1} \partial u \partial v$ is 
such that $(u,v)\mapsto  (u,v)_{L^2 (\Gamma)}+ \widecheck \cB( u,v )$ is an inner product in $E$ equivalent to the standard inner product in $E$, namely $(u,v)_{E}= (u,v)_{L^2 (\Gamma)} + \int_\Gamma  \partial u \partial v$. Therefore, by standard Fredholm's theory, there exist 
\begin{itemize}
\item a non decreasing sequence  of nonnegative real numbers $(\lambda_k)_{k=1}^{\infty}$, that tends to $+\infty$ as $k\to \infty$
\item A Hilbert basis  $\left( \mathsf{e}_{k}\right) _{k=1} ^{\infty}$  of $L^2(\Gamma)$, which is also a 
 a total sequence   of $E$ (and orthogonal if $E$ is endowed with the scalar product  $  (u,v)_{L^2 (\Gamma)}+ \widecheck \cB( u,v )$),
\end{itemize}
such that 
\begin{equation}
\label{eq:31}
\widecheck \cB( \mathsf{e}_{k} ,e )= \lambda_k  (\mathsf{e}_{k} ,e)_{L^2 (\Gamma)} , \quad \hbox{for all } e\in E.
\end{equation}
Note that 
\[
 \int_{\Gamma}\mu\partial\mathsf{e}_{k}\partial\mathsf{e}_{\ell}\psi^{-1} dx=\begin{cases}
 \lambda_{k} & \text{if \ensuremath{k=\ell}},\\
 0 & \text{if \ensuremath{k\ne\ell.}}
\end{cases}
 \]
Note also that $\mathsf{e}_{k}$ is a weak solution of
\begin{equation}
\begin{cases}
-\mu_{\alpha}\partial\left(\psi^{-1}\partial\mathsf{e}_{k}\right)=\lambda_{k}\mathsf{e}_{k} & \text{in }\Gamma_{\alpha}\backslash\mathcal{V},\alpha\in\mathcal{A},\\
\dfrac{\partial_{\alpha}\mathsf{e}_{k}\left(\nu_{i}\right)}{\gamma_{i\alpha}}=\dfrac{\partial_{\beta}\mathsf{e}_{k}\left(\nu_{i}\right)}{\gamma_{i\beta}} & \text{for all }\alpha,\beta\in\mathcal{A}_{i},\\
\sum_{\alpha\in\mathcal{A}_{i}}  n_{i\alpha}e_{k}|_{\Gamma_{\alpha}}\left(\nu_{i}\right)=0 & \text{if }\nu_{i}\in\mathcal{V}.
\end{cases}\label{eq: eigenvalue problem for E}
\end{equation}
which implies that $\mathsf{e}_{k}|_{\Gamma_\alpha} \in C^2 (\Gamma_\alpha)$ for all $\alpha\in \cA$.

Finally,  the sequence $(\mathsf{f}_{k} )_{k=1}^{\infty}$ given by $\mathsf{f}_{k}= \psi^{-1} \mathsf{e}_{k}$ is a total family in $F$ (but is not orthogonal).

\begin{lem}
\label{lem: Galerkin's method for regularity HJ} Under the assumptions made in Theorem \ref{regularite-sys-derive},
for any positive integer $n$, there
exist $n$ absolutely continuous functions $y_{k}^{n}: [0,T]\to \R$ ,  $k=1,\dots, n$,  
and a function $u_{n}:\left[0,T\right]\to L^2(\Gamma)$ of the form
\begin{equation}
u_{n}\left(t\right)=\sum_{k=1}^{n}y_{k}^{n}\left(t\right)\mathsf{f}_{k},\label{eq: formula of u_m}
\end{equation}
such that for all $k=1,\dots, n$,
\begin{equation}
y_{k}^{n}\left(T\right)=\int_{\Gamma}u_{T}\mathsf{f}_{k}\psi^{2}dx,\label{eq: aproximate terminal condtion u_m}
\end{equation}
and 
\begin{equation}
   -\frac d {dt} ( u_n, \mathsf{f}_{k}\psi )_{L^2(\Gamma)}
  + \int_{\Gamma}\Bigl(\mu\partial u_{n} -H\left(x,u_{n}\right)  \Bigr)   \partial\left(\mathsf{f}_{k}\psi\right) dx
=
-\int_{\Gamma}f \partial (\mathsf{f}_{k}\psi) dx.\label{eq: approximate linear equation for u_m}
\end{equation}
\end{lem}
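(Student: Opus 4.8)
The plan is to imitate the proof of Lemma~\ref{lem: Galerkin's method}, the only genuinely new feature being the nonlinear term coming from $H$. First I would insert the ansatz \eqref{eq: formula of u_m} into \eqref{eq: approximate linear equation for u_m} and read off a system of ordinary differential equations for the coefficient vector $Y=(y_1^n,\dots,y_n^n)^T$. Using $\mathsf{f}_k\psi=\mathsf{e}_k$ (recall $\mathsf{f}_k=\psi^{-1}\mathsf{e}_k$), the time-derivative term becomes $-\frac{d}{dt}(u_n,\mathsf{e}_k)_{L^2(\Gamma)}=-\frac{d}{dt}(M_nY)_k$, where $M_n$ is the symmetric mass matrix $(M_n)_{k\ell}=\int_\Gamma \mathsf{f}_k\mathsf{f}_\ell\,\psi\,dx=\int_\Gamma \psi^{-1}\mathsf{e}_k\mathsf{e}_\ell\,dx$; the linear elliptic term is $(BY)_k$ with $B_{k\ell}=\int_\Gamma \mu\,\partial\mathsf{f}_\ell\,\partial\mathsf{e}_k\,dx$; the contribution of the Hamiltonian is $\Phi_k(Y):=-\int_\Gamma H(x,u_n)\,\partial\mathsf{e}_k\,dx$ with $u_n=\sum_\ell y_\ell^n\mathsf{f}_\ell$; and the source is $G_k(t):=-\int_\Gamma f(\cdot,t)\,\partial\mathsf{e}_k\,dx$. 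Thus \eqref{eq: approximate linear equation for u_m} is equivalent to the ODE system $-M_n\dot Y+BY+\Phi(Y)=G(t)$, subject to the terminal value $Y(T)$ prescribed componentwise by \eqref{eq: aproximate terminal condtion u_m}.

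Next I would verify that $M_n$ is invertible. Since $\psi$ is positive and bounded (Definition~\ref{sec:regul-hamilt-jacobi}), the weight $\psi^{-1}$ is bounded above and below by positive constants, and because $(\mathsf{e}_k)_{k\ge1}$ is a Hilbert basis of $L^2(\Gamma)$ the computation carried out for the mass matrix in the proof of Lemma~\ref{lem: Galerkin's method} applies verbatim: $M_n$ is symmetric positive definite and there exist constants $c,C>0$, independent of $n$, with $c|\xi|^2\le \xi^T M_n\xi\le C|\xi|^2$ for all $\xi\in\R^n$. Hence $M_n^{-1}$ exists and the system can be recast in normal form $\dot Y=M_n^{-1}\bigl(BY+\Phi(Y)-G(t)\bigr)$.

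The single point that differs from the linear case is the treatment of $\Phi$, which I expect to be the main (though mild) obstacle: one must rule out finite-time blow-up and produce a solution defined on all of $[0,T]$. Here the running assumption \eqref{eq:18}, $|\partial_pH^\alpha|\le C_0$, is decisive, since it makes $p\mapsto H^\alpha(x,p)$ globally Lipschitz, uniformly in $x$ and $\alpha$. Consequently, for $Y,Y'\in\R^n$ with associated functions $u_n,u_n'$, one has $|\Phi_k(Y)-\Phi_k(Y')|\le C_0\,\|u_n-u_n'\|_{L^2(\Gamma)}\,\|\partial\mathsf{e}_k\|_{L^2(\Gamma)}\le C_n|Y-Y'|$, so $\Phi$ is globally Lipschitz on $\R^n$. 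Moreover $t\mapsto G(t)$ is continuous (hence bounded and measurable) on $[0,T]$, because $f\in C(\Gamma\times[0,T])$ gives $t\mapsto f(\cdot,t)\in L^2(\Gamma)$ continuous while each $\partial\mathsf{e}_k\in L^2(\Gamma)$ is fixed. The vector field $(t,Y)\mapsto M_n^{-1}\bigl(BY+\Phi(Y)-G(t)\bigr)$ is therefore of Carath\'eodory type and globally Lipschitz in $Y$ uniformly for $t\in[0,T]$; by the Carath\'eodory existence and uniqueness theorem (solved backward from $t=T$ after the change of variable $s=T-t$), the global Lipschitz bound excludes blow-up and yields a unique absolutely continuous $Y:[0,T]\to\R^n$ satisfying the terminal data \eqref{eq: aproximate terminal condtion u_m}. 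Setting $u_n(t)=\sum_{k=1}^n y_k^n(t)\,\mathsf{f}_k$ then produces the functions asserted in the statement, which proves the lemma.
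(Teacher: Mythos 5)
Your proposal is correct and follows essentially the same route as the paper: both reduce \eqref{eq: approximate linear equation for u_m} to the terminal-value ODE system $-M_n\dot Y+BY+\mathcal H(Y)=G(t)$, check that $M_n$ is positive definite because $(\psi\mathsf{f}_k)_k$ is a Hilbert basis of $L^2(\Gamma)$, and invoke the global Lipschitz continuity of the nonlinearity (from \eqref{eq:18}) to get a unique global absolutely continuous solution. Your write-up merely makes explicit the Lipschitz estimate for $\Phi$ and the continuity of $G$, which the paper leaves implicit.
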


\begin{proof}[Proof of Lemma~\ref{lem: Galerkin's method for regularity HJ}]
The proof follows the same lines as the one of Lemma~\ref{lem: Galerkin's method}
but it is more technical since we obtain a  system of nonlinear differential equations.
For $n\ge 1$, we consider the symmetric $n$ by $n$ matrix $M_{n}$
defined by 
\[
\left(M_{n}\right)_{k\ell}=\int_{\Gamma}\mathsf{f}_{k}\mathsf{f}_{\ell}\psi dx.
\]
Since $\psi$ is positive and bounded and since $ ( \psi \mathsf{f}_{k}) _{k=1}^{\infty}$
is a Hilbert basis of $L^{2}\left(\Gamma\right)$, we can check
that $M_n$ is a positive definite matrix and there exist two constants
$c,C$ independent of $n$ such that
\begin{equation}
  c\left|\xi\right|^{2}\le\sum_{k,\ell=1}^{n}\left(M_{n}\right)_{k\ell}\xi_{k}\xi_{\ell}\le C\left|\xi\right|^{2},
  \quad\text{for all }\xi\in\R^n.
\end{equation}
Looking for $u_n$ of the form~(\ref{eq: formula of u_m})
and setting $Y=\left(y_{1}^{n},\ldots,y_{n}^{n}\right)^{T}$,
$\dot{Y}=\left( \frac d {dt} y_{1}^{n},\ldots,\frac d {dt}y_{n}^{n}\right)^T$,
\eqref{eq: approximate linear equation} implies that we have to solve the following
a system of ODEs:
\begin{eqnarray}
  \left\{
  \begin{array}{l}
\ds  -M_n \dot{Y}(t)+B Y(t)+\mathcal{H}(Y)(t)
  = G(t),\quad\quad t\in [0,T]\\
\ds  Y(T)= \left( \int_{\Gamma}u_{T}\mathsf{f}_{1}\psi^{2}dx, \cdots ,  \int_{\Gamma}u_{T}\mathsf{f}_{n}\psi^{2}dx\right)^T,
  \end{array}
  \right.
  \label{ode-compliquee}
\end{eqnarray}
where
\begin{itemize}
\item $B_{k\ell}= \int_{\Gamma}  \mu \partial \mathsf{f}_{\ell} \partial (\psi\mathsf{f}_{k}) dx$
\item $\mathcal{H}_i(Y)= - \int_{\Gamma} {H}(x,Y^T F) \partial (\mathsf{f}_i\psi)dx$
with  $F=(\mathsf{f}_{1}, \cdots , \mathsf{f}_{n})^T$ and  $Y^TF = \sum_\ell y_\ell^n \mathsf{f}_{\ell}= u_n$
\item $G_i(t)=-\int_{\Gamma}f(x,t) \partial (\mathsf{f}_{i}\psi) dx$ for all $i\in 1,\cdots, n$.
\end{itemize}
 Since the matrix $M$ is invertible and
the function $\mathcal{H}$ is Lipschitz continuous by (\ref{eq:18}),
the system~\eqref{ode-compliquee} has a unique global solution. This ends the proof of the lemma.
\end{proof}

We start by giving some  estimates for the approximation $u_n$.

\begin{lem}
\label{lem: enegy estimate of u_m in H^2}
Under the assumptions made in Theorem \ref{regularite-sys-derive},
there exists a constant
$C$ depending only on $\Gamma$, $T$, $\psi$, $\left\Vert u_{T}\right\Vert _{F}$,  $\left\Vert \partial f\right\Vert _{L^2\left(\Gamma\times(0,T)\right)}$
 $\|f\|_{C(\Gamma\times[0,T])}$ 
and $\|\partial_t f \|_{L^2(0,T; H^1_b(\Gamma))  }$  
such that 
\[
\left\Vert u_{n}\right\Vert _{L^{\infty}(0,T;F)} 
+ \left\Vert u_{n}\right\Vert _{L^{2}\left(0,T;H_b^{2}\left(\Gamma\right)\right)}
+ \left\Vert \partial_{t}u_{n}\right\Vert _{L^2\left(\Gamma\times(0,T)\right)}
\le 
C.
\]
\end{lem}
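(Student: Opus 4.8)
The plan is to establish the bound by deriving two successive energy estimates for the Galerkin approximation $u_n$ of \eqref{eq: formula of u_m}, in the spirit of the proofs of Lemma~\ref{lem: Energy estimate} and Theorem~\ref{thm: regularity for v}, and then to recover the $H^2_b$ bound from the equation itself. Throughout, multiplying the approximate identity \eqref{eq: approximate linear equation for u_m} by a coefficient $c_k(t)$ and summing over $k=1,\dots,n$ amounts to testing against $\psi\sum_k c_k\mathsf{f}_k$; in particular, choosing $c_k=y_k^n$ tests against $\psi u_n$, while $c_k=-\frac{d}{dt}y_k^n$ tests against $-\psi\partial_t u_n$. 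Since $\psi$ is bounded from above and below by positive constants and $\mathsf{f}_k=\psi^{-1}\mathsf{e}_k\in C^2(\Gamma_\alpha)$ on each edge, all the manipulations below are licit for the finite sum $u_n$.

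First I would prove the basic estimate. Multiplying \eqref{eq: approximate linear equation for u_m} by $y_k^n(t)e^{\lambda t}$, summing, and using $\partial(\psi u_n)=\psi\partial u_n+u_n\partial\psi$, the diffusion term produces the coercive contribution $\int_\Gamma\mu\psi(\partial u_n)^2$ plus a lower-order term $\int_\Gamma\mu\,u_n\partial u_n\partial\psi$, while the nonlinear and source terms are controlled by the sublinearity \eqref{eq:17} of $H$ and by $\|f\|_{C(\Gamma\times[0,T])}$ through Young's inequality. Integrating from $s$ to $T$, bounding the terminal contribution by $\int_\Gamma u_n^2(\cdot,T)\psi\,dx\le C\|u_T\|_{L^2(\Gamma)}^2$ (a consequence of \eqref{eq: aproximate terminal condtion u_m}), and choosing $\lambda$ large enough to absorb the lower-order terms, I obtain
\[
\|u_n\|_{L^\infty(0,T;L^2(\Gamma))}+\|u_n\|_{L^2(0,T;F)}\le C.
\]

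Next comes the higher-order estimate, which is the crux. Multiplying \eqref{eq: approximate linear equation for u_m} by $-\frac{d}{dt}y_k^n(t)\,e^{\lambda t}$ and summing yields, after using $\partial(\psi\partial_t u_n)=\psi\,\partial_t\partial u_n+\partial\psi\,\partial_t u_n$, an identity whose leading terms are $\int_\Gamma(\partial_t u_n)^2\psi$ and $-\tfrac12\frac{d}{dt}\int_\Gamma\mu\psi(\partial u_n)^2$. \textbf{The main obstacle} is the nonlinear term $\int_\Gamma H(x,u_n)\partial(\psi\partial_t u_n)$ and, likewise, the source term $\int_\Gamma f\,\partial(\psi\partial_t u_n)$, both of which contain the dangerous factor $\partial_t\partial u_n$. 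I would handle them by integrating by parts in time over $(s,T)$: this replaces the offending factor by $\partial_t\bigl(H(x,u_n)\bigr)=\partial_p H(x,u_n)\,\partial_t u_n$, which is bounded by $C_0|\partial_t u_n|$ thanks to \eqref{eq:18}, and by $\partial_t f$, which lies in $L^2(\Gamma\times(0,T))$ by hypothesis. The resulting integrals are absorbed, for $\lambda$ large, partly into $\int(\partial_t u_n)^2\psi$ and partly into the already-established $L^\infty L^2\cap L^2F$ bound, while the time-boundary term at $t=s$ is controlled by $\tfrac12\int_\Gamma\mu\psi(\partial u_n(\cdot,s))^2$, and the one at $t=T$ is estimated through $\|f\|_{C(\Gamma\times[0,T])}$ together with $\int_\Gamma\mu\psi(\partial u_n(\cdot,T))^2\le C\|u_T\|_F^2$. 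This last bound follows, as in Theorem~\ref{thm: regularity for v}, from the eigenfunction expansion: writing $\psi u_n(\cdot,T)=\sum_{k\le n}y_k^n(T)\mathsf{e}_k$ and using \eqref{eq:31} gives $\int_\Gamma\mu\psi^{-1}(\partial(\psi u_n(\cdot,T)))^2=\sum_{k\le n}\lambda_k\bigl(y_k^n(T)\bigr)^2\le\int_\Gamma\mu\psi^{-1}(\partial(\psi u_T))^2\le C\|u_T\|_F^2$, the two weights $\psi$ and $\psi^{-1}$ being interchangeable up to lower-order terms already controlled. Choosing $\lambda$ large, I conclude
\[
\|\partial_t u_n\|_{L^2(\Gamma\times(0,T))}+\|u_n\|_{L^\infty(0,T;F)}\le C.
\]

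Finally, to get the $L^2(0,T;H^2_b(\Gamma))$ bound I would argue as in the corresponding step of Theorem~\ref{thm: regularity for v}: on each edge $\Gamma_\alpha$ the first equation of \eqref{eq: regularity for H-J} gives $\mu_\alpha\partial^2 u_n=-\partial_t u_n+\partial_x H(x,u_n)+\partial_p H(x,u_n)\partial u_n-\partial f$ almost everywhere, and the right-hand side is bounded in $L^2(\Gamma\times(0,T))$ by the two preceding estimates together with \eqref{eq:18}, \eqref{eq:19} and $\partial f\in L^2(\Gamma\times(0,T))$. Hence $\|\partial^2 u_n\|_{L^2(\Gamma\times(0,T))}\le C$, which combined with the previous bounds yields the asserted estimate. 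I expect the genuine difficulty to be confined to the time-integration by parts in the higher-order estimate and the careful bookkeeping of the associated time-boundary and vertex terms; the remaining steps are routine adaptations of the linear arguments already carried out in Section~\ref{subsec: The-linear-parabolic}.
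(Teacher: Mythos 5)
Your proposal is correct, and Steps 1 and 3 (the basic $L^\infty L^2\cap L^2 F$ estimate and the recovery of $\partial^2 u_n$ from the equation on each edge) coincide with the paper's argument. The crucial second step, however, is handled by a genuinely different device. The paper integrates by parts \emph{in space} in the terms $\int_\Gamma H(x,u_n)\,\partial(\psi\,\partial_t u_n)\,dx$ and $\int_\Gamma f\,\partial(\psi\,\partial_t u_n)\,dx$; this produces the bulk term $\int_\Gamma \partial\bigl(H(x,u_n)\bigr)\partial_t u_n\,\psi\,dx$, controlled via \eqref{eq:34}, but also vertex contributions $\sum_{i,\alpha} n_{i\alpha}\bigl[H^\alpha(\nu_i,u_n|_{\Gamma_\alpha}(\nu_i,t))-f|_{\Gamma_\alpha}(\nu_i,t)\bigr]\partial_t u_n|_{\Gamma_\alpha}(\nu_i,t)\psi|_{\Gamma_\alpha}(\nu_i)$, which the authors identify as the main new difficulty; they dispose of these by introducing the antiderivative $\mathcal{J}_{i\alpha}$ of $p\mapsto H^\alpha(\nu_i,p)$, integrating by parts in time \emph{at the vertices}, and invoking the trace inequalities \eqref{pointwise-estim} together with $\partial_t f|_{\Gamma_\alpha}(\nu_i,\cdot)\in L^2(0,T)$. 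You instead integrate by parts in time on the whole space integral, using $\partial(\psi\,\partial_t u_n)=\partial_t\bigl(\partial(\psi u_n)\bigr)$, which is legitimate for the finite-dimensional approximation since the $y_k^n$ are absolutely continuous; this avoids any spatial integration by parts and hence produces no vertex terms at all, replacing them by the bulk terms $\partial_p H(x,u_n)\partial_t u_n\,\partial(\psi u_n)$ and $(\partial_t f+\lambda f)\,\partial(\psi u_n)$, both controlled by \eqref{eq:18}, Step 1 and Young's inequality, plus time-boundary terms at $s$ and $T$ that you absorb into $\tfrac12\int_\Gamma\mu\psi(\partial u_n(\cdot,s))^2$ and bound through the spectral estimate $\sum_{k\le n}\lambda_k (y_k^n(T))^2\le C\|u_T\|_F^2$, exactly as the paper does for its own $t=T$ term. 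Your route is arguably cleaner and uses slightly weaker hypotheses on $f$ (only $\partial_t f\in L^2(\Gamma\times(0,T))$ and $f\in C(\Gamma\times[0,T])$, with no need for the $L^2$-in-time control of the traces $\partial_t f|_{\Gamma_\alpha}(\nu_i,\cdot)$); the paper's route has the advantage of isolating explicitly the vertex contributions, which is the structure reused later in Lemmas~\ref{cor: well-defined of u} and~\ref{thm: recover v}. Both arguments share the same mild abuse in the final step, namely reading the strong PDE off the Galerkin approximation $u_n$ rather than off the limit $u$, so no discrepancy arises there.
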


\begin{proof}[Proof of Lemma~\ref{lem: enegy estimate of u_m in H^2}]
We divide the proof into two steps:

\emph{Step 1: Uniform estimates of $u_n$ in $L^{\infty}(0,T;L^2(\Gamma))$, $L^{2}(0,T;F)$ and $W^{1,2}(0,T; E')$.}
Multiplying \eqref{eq: approximate linear equation for u_m}
by $y_{k}^{n}\left(t\right)\mathsf{f}_{k}e^{\lambda t}\psi$
where $\lambda$ is a positive constant to be chosen later, summing for $k= 1,\dots, n$
and using  \eqref{eq: formula of u_m}, we get
\begin{displaymath}
 -\int_{\Gamma}\partial_{t}u_{n}u_{n}e^{\lambda t}\psi dx+\int_{\Gamma}\Bigl(\mu\partial u_{n} -H(x,u_n)\Bigr) 
\partial\left(u_{n}e^{\lambda t}\psi\right)dx = -\int_{\Gamma}f \partial(u_{n}\psi e^{\lambda t}) dx.  
\end{displaymath}

In the following lines, $C$ will be a constant that may vary from lines to lines.
Since $H$ satisfies (\ref{eq:17})
 and
$f$ is bounded, there exists a constant $C$  such that 
\begin{align}
 & -    \int_{\Gamma}\left[\partial_{t}\left(\dfrac{u_{n}^{2}}{2}e^{\lambda t}\right)
   -    \dfrac{\lambda}{2}u_{n}^{2}e^{\lambda t}\right]\psi dx+\int_{\Gamma}\mu\left|\partial u_{n}\right|^{2}e^{\lambda t}\psi dx
   -    C \int_{\Gamma} \left|u_{n}\right|\left(|u_n|+ \left|\partial u_{n}\right|\right)e^{\lambda t} dx\nonumber \\
    \le & C \int_{\Gamma} (|u_{n}| + |\partial u_n|)e^{\lambda t} dx.\label{energy estimate regularity HJ_1}
\end{align}
 The desired estimate on $u_n$ is obtained from the previous inequality in a
 similar way as in the  proof of Lemma~\ref{lem: Energy estimate},  by taking $\lambda$ large enough.


\emph{Step 2: Uniform estimates of $u_n$ in $L^{\infty}(0,T;F) \cap L^{2}(0,T;H^2_b(\Gamma))$
and of $\partial_t u_n$ in $L^2 (\Gamma\times(0,T))$.}
Multiplying \eqref{eq: approximate linear equation for u_m}
by $\partial_{t}y_{k}^{n}\left(t\right)\mathsf{f}_{k}e^{\lambda t}\psi$
where $\lambda$ is a positive constant to be chosen later,
integrating by part the term containing $H$ and $f$ (all the integration by parts are justified)
 summing for $k=1,\dots,n$
and using  \eqref{eq: formula of u_m}, we obtain that
\begin{align}\label{eq:33}
 & -\int_{\Gamma} (\partial_{t}u_{n}) ^2 e^{\lambda t}\psi dx+
\int_{\Gamma}\mu\partial u_{n}\partial\left(\partial_t u_{n}e^{\lambda t}\psi\right)dx
 + \int_{\Gamma}\partial \left(H\left(x,u_{n}\right)\right) \partial_ t u_{n}e^{\lambda t}\psi dx\\
 & - \sum_{i\in I} \sum_{\alpha\in\mathcal{A}_{i}} n_{i\alpha} \left[H^\alpha(\nu_{i},u_{n}|_{\Gamma_{\alpha}} \left(\nu_{i},t\right))
   - f|_{\Gamma_{\alpha}}\left(\nu_{i},t\right)\right] \partial_t u_{n}|_{\Gamma_{\alpha}}\left(\nu_{i},t\right)\psi|_{\Gamma_{\alpha}}\left(\nu_{i}\right)e^{\lambda t}=\int_{\Gamma} \partial f \partial_t u_{n}\psi e^{\lambda t} dx. \nonumber
\end{align}
Note that from (\ref{eq:18}) and (\ref{eq:19}),
\begin{equation}
  \label{eq:34}
\left|\partial \left(H\left(x,u_{n}\right)\right)\right|\le C_0 (1+ |u_n|+ |\partial u_n|)
\end{equation}
so, from Step 1, this function is bounded in $L^2(\Gamma\times (0,T))$ by a constant. 
Moreover, 
\begin{equation}
  \label{eq:35}
\int_s^T \int_{\Gamma} \partial f \partial_t u_{n}\psi e^{\lambda t} dx dt\le
C \left(\int_s^T \int_{\Gamma} (\partial f)^2 e^{\lambda t} dx dt \right)^{\frac 1 2} \left(\int_s^T \int_{\Gamma}  (\partial_t u_{n}) ^2  e^{\lambda t} \psi  dx dt \right)^{\frac 1 2},
\end{equation}
and we can also estimate the term $\int_{\Gamma}\mu\partial u_{n}\partial\left(\partial_t u_{n}e^{\lambda t}\psi\right)dx$ as in the proof of Theorem~\ref{thm: regularity for v}. 
 Therefore, the only  new difficulty with respect to the proof of Theorem~\ref{thm: regularity for v} consists of obtaining 
a  bound for  the term
\[
\sum_{i\in I}\sum_{\alpha\in\mathcal{A}_{i}}n_{i\alpha}\left[H^\alpha\left(\nu_{i},u_{n}|_{\Gamma_{\alpha}}\left(\nu_{i},t\right)\right)
   -  f|_{\Gamma_{\alpha}}\left(\nu_{i},t\right)\right]\partial_{t}u_{n}|_{\Gamma_{\alpha}}\left(\nu_{i},t\right) e^{\lambda t} \psi|_{\Gamma_{\alpha}} \left(\nu_{i}\right).
\]
Let  $\mathcal{J}_{i\alpha}(p)$ be the primitive function of $p \mapsto H^\alpha(\nu_i,p)$ such that   $\mathcal{J}_{i\alpha}(0)=0$:
\begin{eqnarray*}
  H^\alpha\left(\nu_{i},u_{n}|_{\Gamma_{\alpha}}\left(\nu_{i},s\right)\right)\partial_{t}u_{n}|_{\Gamma_{\alpha}}\left(\nu_{i},s\right)
  =  \frac d {dt} \mathcal{J}_{i\alpha}(u_{n}|_{\Gamma_{\alpha}}\left(\nu_{i},s\right)).
\end{eqnarray*}
We can then write
\begin{displaymath}
  \begin{split} &- \int_s ^T
\left(  n_{i\alpha}  H^\alpha\left(\nu_{i},u_{n}|_{\Gamma_{\alpha}}\left(\nu_{i},t\right)\right)
  \partial_{t}u_{n}|_{\Gamma_{\alpha}}\left(\nu_{i},t\right) e^{\lambda t} \psi|_{\Gamma_{\alpha}} \left(\nu_{i}\right)\right) dt
\\ =&  n_{i\alpha} \psi|_{\Gamma_{\alpha}} \left(\nu_{i}\right) \left( - \mathcal{J}_{i\alpha}\left(u_{n}|_{\Gamma_{\alpha}} \left(\nu_{i},T\right)\right) e^{\lambda T}
+\mathcal{J}_{i\alpha}\left(u_{n}|_{\Gamma_{\alpha}} \left(\nu_{i},s\right)\right) e^{\lambda s} +
\lambda \int_s ^T
  \mathcal{J}_{i\alpha}\left( u_{n}|_{\Gamma_{\alpha}}\left(\nu_{i},t\right)\right)  e^{\lambda t} dt \right).
  \end{split}
\end{displaymath}
Since $H^\alpha(x, \cdot)$ is sublinear, see (\ref{eq:17}), $|\mathcal{J}_{i\alpha}(p)|$ is subquadratic, i.e., 
$|\mathcal{J}_{i\alpha}(p)|\leq C(1+p^2)$, for a constant $C$ independent of $\alpha$ and $i$. This implies that
\begin{displaymath}
  \begin{split} &\left| \int_s ^T
\left(  n_{i\alpha}  H^\alpha\left(\nu_{i},u_{n}|_{\Gamma_{\alpha}}\left(\nu_{i},t\right)\right)
  \partial_{t}u_{n}|_{\Gamma_{\alpha}}\left(\nu_{i},t\right) e^{\lambda t} \psi|_{\Gamma_{\alpha}} \left(\nu_{i}\right)\right) dt\right|
\\
\le & C\left( e^{\lambda T}+  u_{n}^2 |_{\Gamma_{\alpha}}\left(\nu_{i},T\right) e^{\lambda T} 
 + u_{n}^2 |_{\Gamma_{\alpha}}\left(\nu_{i},s\right) e^{\lambda s}\right) +
C \lambda \int_0 ^T \left(1+ u_{n}^2 |_{\Gamma_{\alpha}}\left(\nu_{i},t\right)\right)  e^{\lambda t} dt .
\end{split}
\end{displaymath}
Note that, from Step 1 and the stability of the trace, $ \lambda \int_s ^T \left(1+ u_{n}^2 |_{\Gamma_{\alpha}}\left(\nu_{i},t\right)\right)  e^{\lambda t} dt \le C \lambda e^{\lambda T}$.
To summarize 
\begin{equation}
  \label{eq:36}
 \begin{split} &\left| \int_s ^T
\left(  n_{i\alpha}  H^\alpha\left(\nu_{i},u_{n}|_{\Gamma_{\alpha}}\left(\nu_{i},t\right)\right)
  \partial_{t}u_{n}|_{\Gamma_{\alpha}}\left(\nu_{i},t\right) e^{\lambda t} \psi|_{\Gamma_{\alpha}} \left(\nu_{i}\right)\right) dt\right|
\\
\le & C\left(   u_{n}^2 |_{\Gamma_{\alpha}}\left(\nu_{i},T\right) e^{\lambda T} 
 + u_{n}^2 |_{\Gamma_{\alpha}}\left(\nu_{i},s\right)^2 e^{\lambda s}\right) + \tilde C(\lambda).
\end{split}
\end{equation}
Similarly, using  the fact that $f\in C(\Gamma\times [0,T])$
and $\partial_t f |_{\Gamma_{\alpha}} \left(\nu_{i},\cdot \right)\in L^2\left(0,T\right)$, and integrating by part, we see that
\begin{eqnarray*}
& &\left|  \int_s^T f|_{\Gamma_{\alpha}}\left(\nu_{i},t\right)\partial_{t}u_{n}|_{\Gamma_{\alpha}}\left(\nu_{i},t\right) e ^{\lambda t} dt\right|\\
  &=& \left| (f|_{\Gamma_{\alpha}} u_{n})|_{\Gamma_{\alpha}}\left(\nu_{i},T\right) e^{\lambda T}
 - (f|_{\Gamma_{\alpha}} u_{n})|_{\Gamma_{\alpha}}\left(\nu_{i},t\right) e^{\lambda s}
  - \int_s^T \left(\lambda f|_{\Gamma_{\alpha}}\left(\nu_{i},t\right) + \partial_{t}f|_{\Gamma_{\alpha}}\left(\nu_{i},t\right)\right) u_{n}|_{\Gamma_{\alpha}}\left(\nu_{i},t\right) e^{\lambda t} dt\right|\\
  &\leq&
  C\left( |u_{n}|_{\Gamma_{\alpha}}\left(\nu_{i},T\right)| e^{\lambda T} +|u_{n}|_{\Gamma_{\alpha}}\left(\nu_{i},s\right)|e^{\lambda s}+
\lambda   \int_s^T \left|u_{n}|_{\Gamma_{\alpha}}\left(\nu_{i},t\right)\right| e^{\lambda t} dt  \right)\\ &&
+ \frac  1 2 \int_s^T  u^2_{n}|_{\Gamma_{\alpha}}\left(\nu_{i},t\right) e^{\lambda t} dt +
 \frac  1 2 \int_s^T  \left(\partial_{t}f|_{\Gamma_{\alpha}}\left(\nu_{i},t\right)\right)^2  e^{\lambda t} dt 
.
\end{eqnarray*}
From Step 1 and the assumptions on $f$, the last three terms in the right hand side of the latter estimate are bounded by a constant depending on $\lambda$, but not on $n$.
 To summarize,
 \begin{equation}
   \label{eq:37}
\left|  \int_s^T f|_{\Gamma_{\alpha}}\left(\nu_{i},t\right)\partial_{t}u_{n}|_{\Gamma_{\alpha}}\left(\nu_{i},t\right) e ^{\lambda t} dt\right|\le C\left( 
 |u_{n}|_{\Gamma_{\alpha}}\left(\nu_{i},T\right)| e^{\lambda T} +|u_{n}|_{\Gamma_{\alpha}}\left(\nu_{i},s\right)|e^{\lambda s}
\right)
  + \tilde C(\lambda).
 \end{equation}
To conclude from (\ref{eq:36}) and (\ref{eq:37}), we use the following estimates
\begin{eqnarray}
  \left\{
  \begin{array}{l}
\ds  \left|u_{n}|_{\Gamma_{\alpha}} \left(\nu_{i},t\right)\right|\le C \left( \int_{\Gamma_{\alpha}}\left|u_{n}\left(x,t\right)\right|dx+             
     \int_{\Gamma_{\alpha}}\left|\partial u_{n}\left(x,t\right)\right|dx \right),\\
\ds  u_{n}^{2}|_{\Gamma_{\alpha}}\left(\nu_{i},t\right)\le C \left( \int_{\Gamma_{\alpha}}u_{n}^{2}\left(x,t\right)dx+\int_{\Gamma_{\alpha}}\left|u_{n}\partial u_{n}\left(x,t\right)\right|dx \right),
  \end{array}
  \right.
  \label{pointwise-estim}
\end{eqnarray}
for $t=s$ and $t=T$.

Then proceeding as in the proof of  Theorem~\ref{thm: regularity for v} and combining (\ref{eq:33}), (\ref{eq:34}), (\ref{eq:35}), (\ref{eq:36}) and (\ref{eq:37}) with 
(\ref{pointwise-estim}), we find the desired estimates by taking $\lambda$ large enough.

Let us end the proof by proving (\ref{pointwise-estim}).
The function $\phi= u_{n}|_{\Gamma_{\alpha}} \left(\cdot,t\right)$ is in $H^1(\Gamma_\alpha)$.
By the continuous embedding $H^1(\Gamma_\alpha) \hookrightarrow C(\Gamma_\alpha)$,
we can define $\phi$ in the pointwise sense (and even at two endpoints of any edges, see~\eqref{eq: v at the vertices}).
For all $\alpha\in \mathcal{A}$ and $x,y\in \Gamma_{\alpha}$,  we have
$
\phi(x) = \phi(y) + \int_{[y,x]} \partial \phi(\xi) d\xi.
$
It follows
\begin{eqnarray*}
  |\Gamma_\alpha| \phi(x)=\int_{\Gamma_{\alpha}} \phi(x)dy
  = \int_{\Gamma_{\alpha}}\phi(y)dy + \int_{\Gamma_{\alpha}} \int_{[y,x]} \partial \phi(\xi) d\xi dy
  \leq \int_{\Gamma_{\alpha}} |\phi(\xi )|d\xi + |\Gamma_\alpha| \int_{\Gamma_{\alpha}} |\partial \phi(\xi)| d\xi,
\end{eqnarray*}
which gives the first estimate setting $x=\nu_i$. The second estimate is obtained in the same way replacing $\phi$
by $\phi^2$ and using the fact that $W^{1,1}(\Gamma_\alpha)ss$ is continuously imbedded in $C(\Gamma_\alpha)$.
\end{proof}



\begin{proof}[Proof of Theorem~\ref{regularite-sys-derive}]
From Lemma \ref{lem: enegy estimate of u_m in H^2}, up to the extraction
of a subsequence, there exists $u\in L^{2}\left(0,T;H_{b}^{2}\left(\Gamma\right)\right)\cap W^{1,2}\left(\Gamma\times(0,T)\right)$
such that
\begin{equation}
\begin{cases}
u_{n}\rightharpoonup u, & \text{in }L^{2}\left(0,T;F\cap H_{b}^{2}\left(\Gamma\right)\right),\\
\partial_{t}u_{n}\rightharpoonup\partial_{t}u, & \text{in }L^2\left(\Gamma\times(0,T)\right).
\end{cases}\label{eq: u_n weakly converges}
\end{equation}
Moreover, by Aubin-Lions Theorem (see Lemma~\ref{aubin-lions-lem}),
\[L^{2}\left(0,T; F\cap H^{2}_b\left(\Gamma\right)\right) \cap W^{1,2}\left(0,T;L^{2}\left(\Gamma\right)\right)
\mathop{\hookrightarrow}^{\tiny\rm compact} L^{2}\left(0,T;F\right),\]
so up to the extraction of a subsequence, we may assume that $u_{n}\to u$ in $L^2 (0,T;F)$ and almost everywhere.
Moreover, from the compactness of the trace operator from  $W^{1,2}(\Gamma_\alpha \times (0,T))$ to 
$ L^2(\partial \Gamma_\alpha \times (0,T) )$, $u_n|_{\partial \Gamma_\alpha \times (0,T)} \to u|_{\partial \Gamma_\alpha \times (0,T)}$ in $ L^2(\partial \Gamma_\alpha \times (0,T) )$ and for almost every $t\in (0,T)$.
 Similarly, 
$u_n|_{ \Gamma_\alpha \times \{t=T\}} \to u|_{ \Gamma_\alpha \times \{t=T\}}$ 
in $ L^2( \Gamma_\alpha  )$ and almost everywhere in $\Gamma_\alpha$.
Then, using the Lipschitz continuity of $H$ with respect to its second argument, and similar arguments as in the proof of Theorem \ref{thm: existence and uniqueness linear equation},
we obtain the existence of a solution of~\eqref{eq: regularity for H-J} satisfying~\eqref{energ-est}
by letting $n\rightarrow +\infty$.
Since $H^2(\Gamma_\alpha)\subset C^{1+\sigma}(\Gamma_\alpha)$ for some
$\sigma\in (0,1/2)$,  $u(\cdot, t) \in C^{1+\sigma}\left(\Gamma_\alpha\right)$ for all $\alpha\in\mathcal{A}$ and a.a. $t$.

Finally, the proof of uniqueness is a consequence of the energy estimate~\eqref{energ-est} for $u$.
\end{proof}

Next, we want to prove that, if $u$ is the solution of~\eqref{eq: regularity for H-J}
and $v$ is the solution~\eqref{eq: H-J}, then $\partial u=v$. It means that we have to define
a primitive function on the network $\Gamma$.

\begin{defn} \label{def-chemin}
Let $x\in\Gamma_{\alpha_{0}}=\left[\nu_{i_{0}},\nu_{i_{1}}\right]$
and $y\in\Gamma_{\alpha_{m}}=\left[\nu_{i_{m}},\nu_{i_{m+1}}\right]$.
We denote the set of paths joining from $x$ to $y$ by $\overrightarrow{xy}$.
More precisely, if $\mathcal{L}\in\overrightarrow{xy}$, we can
write $\mathcal{L}$ under the form
\[
\mathcal{L}=x\rightarrow\nu_{i_{1}}\rightarrow\nu_{i_{2}}\rightarrow\ldots\rightarrow\nu_{i_{m}}\rightarrow y,
\]
with $\nu_{i_{k}}\in\mathcal{V}$ and $\left[\nu_{i_{k}},\nu_{i_{k+1}}\right]=\Gamma_{\alpha_{k}}$.
The integral of a function $\phi$ on $\mathcal{L}$ is defined by
\begin{equation}
\int_{\mathcal{L}}\phi\left(\xi\right)d\xi 
=  \int_{[x,\nu_{i_1}]} \phi\left(\xi\right)d\xi
  + \sum_{k=1}^{m} \int_{[\nu_{i_{k}},\nu_{i_{k+1}}]} \phi\left(\xi\right) d\xi
  +  \int_{[\nu_{i_{m}},y]} \phi\left(\xi\right) d\xi,
\end{equation}
recalling that the integrals on a segment are defined in \eqref{int-segment}.
\end{defn}

\begin{lem}\label{cor: well-defined of u}
Let $u$ be the unique solution of \eqref{eq: regularity for H-J} with $u_{T}=\partial v_{T}$. Then for all $x,y\in\Gamma$
and a.e. $t\in\left[0,T\right]$,
\[
\int_{\mathcal{L}_{1}}u\left(\zeta,t\right)d\zeta
=\int_{\mathcal{L}_{2}}u\left(\zeta,t\right)d\zeta,\quad\text{for all }\mathcal{L}_{1},\mathcal{L}_{2}\in\overrightarrow{xy}.
\]
This means that the integral of $u$ from $x$ to $y$ does not depend on the path. Hence,
for any $\mathcal{L}\in\overrightarrow{xy}$, we can define
\[
\int_{\overrightarrow{xy}}u\left(\zeta,t\right)d\zeta:=\int_{\mathcal{L}}u\left(\zeta,t\right)d\zeta.
\]
\end{lem}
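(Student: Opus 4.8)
The plan is to fix two paths $\mathcal{L}_1,\mathcal{L}_2\in\overrightarrow{xy}$ and to show that the scalar function $t\mapsto \Psi(t):=\int_{\mathcal{L}_1}u(\zeta,t)\,d\zeta-\int_{\mathcal{L}_2}u(\zeta,t)\,d\zeta$ vanishes identically on $[0,T]$. Since $\partial_t u\in L^2(\Gamma\times(0,T))$ and $u\in C([0,T];F)$, this function is absolutely continuous, with $\Psi'(t)=\int_{\mathcal{L}_1}\partial_t u(\zeta,t)\,d\zeta-\int_{\mathcal{L}_2}\partial_t u(\zeta,t)\,d\zeta$ for a.e.\ $t$; hence it suffices to prove that $\Psi(T)=0$ and that $\Psi'(t)=0$ for a.e.\ $t$.

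The key observation is that, on each edge, the PDE in \eqref{eq: regularity for H-J} can be rewritten as $\partial_t u_\alpha=\partial R_\alpha$, where $R_\alpha:=-\mu_\alpha\partial u_\alpha+H^\alpha(\pi_\alpha(\cdot),u_\alpha)-f_\alpha$. For a.e.\ $t$ one has $u(\cdot,t)\in H^2_b(\Gamma)$ and $f(\cdot,t)\in H^1_b(\Gamma)$, so $R_\alpha(\cdot,t)\in H^1(\Gamma_\alpha)\hookrightarrow C(\Gamma_\alpha)$, and the fundamental theorem of calculus (using \eqref{int-segment}) gives $\int_{[a,b]}\partial_t u=R_\alpha|_b-R_\alpha|_a$ for any directed segment $[a,b]\subset\Gamma_\alpha$. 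Summing these contributions along a path $\mathcal{L}:x\to\nu_{i_1}\to\dots\to\nu_{i_m}\to y$ produces a telescoping sum in which, at each intermediate (necessarily transition) vertex $\nu_{i_k}$, the incoming edge $\Gamma_{\alpha_{k-1}}$ contributes $+R_{\alpha_{k-1}}|_{\nu_{i_k}}$ and the outgoing edge $\Gamma_{\alpha_k}$ contributes $-R_{\alpha_k}|_{\nu_{i_k}}$. Now, using $\partial_\alpha u(\nu_i)=n_{i\alpha}\partial u|_{\Gamma_\alpha}(\nu_i)$ from \eqref{eq:2}, the Robin-type transmission condition (the third line of \eqref{eq: regularity for H-J}) is precisely the identity $R_\alpha|_{\nu_i}=R_\beta|_{\nu_i}$ for all $\alpha,\beta\in\mathcal{A}_i$ at every transition vertex; hence all interior terms cancel and $\int_{\mathcal{L}}\partial_t u=R_{\alpha_m}|_y-R_{\alpha_0}|_x$, a quantity depending only on $x$ and $y$. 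Applying this to $\mathcal{L}_1$ and $\mathcal{L}_2$, whose first and last segments lie respectively on the common edges containing $x$ and $y$, yields $\Psi'(t)=0$.

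It remains to check $\Psi(T)=0$. By the terminal condition $u(\cdot,T)=u_T=\partial v_T$ with $v_T\in H^2(\Gamma)\subset C(\Gamma)$ whose edge restrictions are $C^1$, the segment integrals give $\int_{[a,b]}u(\cdot,T)=v_T(b)-v_T(a)$, and the continuity of $v_T$ at the vertices makes the telescoping along any path collapse to $v_T(y)-v_T(x)$, independently of the path. Thus $\Psi(T)=0$, and combining with $\Psi'\equiv0$ gives $\Psi\equiv0$, which is exactly the asserted path-independence and legitimizes the notation $\int_{\overrightarrow{xy}}u(\zeta,t)\,d\zeta$.

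I expect the main obstacle to be the rigorous justification of the two ``fundamental theorem of calculus'' steps at the regularity available: that $\partial_t u_\alpha=\partial R_\alpha$ holds in $L^2$ and that $R_\alpha(\cdot,t)$ is absolutely continuous with well-defined vertex traces for a.e.\ $t$, together with the differentiation-under-the-integral identity for $\Psi'$. All of these follow from $u\in L^2(0,T;H^2_b(\Gamma))\cap W^{1,2}(0,T;L^2(\Gamma))\cap C([0,T];F)$ and the hypotheses on $f$ established in Theorem~\ref{regularite-sys-derive}, but they must be handled with care so that the transmission condition, which a priori holds only for a.e.\ $t$, may be used pointwise in the telescoping cancellation.
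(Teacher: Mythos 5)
Your proof is correct and follows essentially the same route as the paper: both arguments rest on writing the interior equation in divergence form $\partial_t u=\partial\bigl(-\mu_\alpha\partial u+H^{\alpha}(\cdot,u)-f\bigr)$, telescoping along the path using the Robin-type transmission condition \eqref{eq:25} (which, as you note, must first be extracted from the weak formulation for a.e.\ $t$ using the $H^2_b$ regularity of $u$), and using the continuity of $v_T$ to handle the terminal time. The only cosmetic difference is that the paper reduces to simple loops based at vertices and tests the weak formulation \eqref{eq:30} against a piecewise-constant function in $E$ supported on the loop, whereas you telescope directly via the fundamental theorem of calculus on each edge of the two paths; the boundary cancellations produced are identical.
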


\begin{proof}[Proof of Lemma~\ref{cor: well-defined of u}]
First, it is sufficient to prove $\int_{\mathcal{L}}u\left(\zeta,t\right)d\zeta=0$
for all $\mathcal{L}\in \overrightarrow{xx}$. Secondly, if a given edge is browsed twice in opposite senses,
the two related contributions to the integral sum to zero. It follows that, without
loss of generality, we only need to consider loops in $\overrightarrow{xx}$  such that all the complete edges
 that it contains are browsed once only. It is also easy to see that we can focus on the case when  $x\in \cV$.
To summarize, we only need to prove that
\[
\int_{\mathcal{L}}u\left(\zeta,t\right)d\zeta=0
\]
when $\nu_{i_{0}}\in\mathcal{V}\backslash\partial\Gamma$ and
$\mathcal{L}=\nu_{i_{0}}\rightarrow\nu_{i_{1}}\rightarrow\ldots\rightarrow\nu_{i_{m}}\rightarrow\nu_{i_{0}}$,
where $\nu_{i_{k}}\ne\nu_{i_{\ell}}$ for $k\not= l$.

The following conditions 
\begin{enumerate}
\item $ \mathsf{e}|_{\Gamma_\alpha}=0$  on each edge $\Gamma_\alpha$ not contained in $\cL$
\item for all $k=0,\dots m-1$,  $ \mathsf{e}|_{\Gamma_{\alpha_k}}= 1_{i_k<i_{k+1} }  - 1_{i_k>i_{k+1} }$ if
 $\Gamma_{\alpha_k}$ is the edge joining $\nu_{i_k}$ and  $\nu_{i_{k+1}}$
\item $ \mathsf{e}|_{\Gamma_{\alpha_m}}= 1_{i_m<i_{0} }  - 1_{i_m>i_{0} }$ if 
 $\Gamma_{\alpha_m}$ is the edge joining $\nu_{i_m}$ and  $\nu_{i_{0}}$
\end{enumerate}
define a unique function $ \mathsf{e}\in E$ which takes at most two values  on $\cL$, namely $\pm 1$.

From Definition~\ref{def-chemin}, we have
\begin{eqnarray*}
  \frac {d} {dt}\int_{\mathcal{L}}u\left(\zeta,t\right)dt
  &=&\sum_{k=0}^{m} \frac {d} {dt}\int_{[\nu_{i_{k}},\nu_{i_{k+1}}]}u\left(\zeta,t\right)d\zeta+ \frac {d} {dt}\int_{[\nu_{i_{m}},\nu_{i_{0}}]}u\left(\zeta,t\right)d\zeta\\
  &=&  \frac {d} {dt}\int_{\Gamma} u\left(\zeta,t\right)\mathsf{e}\left(\zeta\right)d\zeta 
  =\int_{\Gamma}\partial_{t}u\left(\zeta,t\right)\mathsf{e}\left(\zeta\right)d\zeta.
\end{eqnarray*}
Then, using  Definition~\ref{def882}, Remark \ref{sec:regul-hamilt-jacobi-1} and Remark~\ref{explication-def-weak-sol} yields that
\begin{eqnarray*}
  && \frac d{dt}\int_{\mathcal{L}}u\left(\zeta,t\right)d\zeta\\
&=& \sum_{\alpha\in\mathcal{A}} \int_{\Gamma_{\alpha}}\left[-\mu_{\alpha}\partial^{2}u\left(\zeta,t\right)+\partial H\left(\zeta,u\left(\zeta,t\right)\right)-\partial f\left(\zeta,t\right)\right]\mathsf{e}\left(\zeta\right)d\zeta\\
  &= & \sum_{k=0}^{m} \int_{\Gamma_{\alpha_{k}}}\left[-\mu_{\alpha_{k}}\partial^{2}u\left(\zeta,t\right)+\partial H\left(\zeta,u\left(\zeta,t\right)\right)-\partial f\left(\zeta,t\right)\right]\mathsf{e}\left(\zeta\right) d\zeta\\
  &= & \sum_{k=0}^{m} \mathsf{e}|_{\Gamma_{\alpha_k}} (\nu_i)\left(
    \begin{array}[c]{l}
      n_{i_{k+1}\alpha_{k}}  
 \left( -\mu_{\alpha_{k}}\partial u|_{\Gamma_{\alpha_k}}\left(\nu_{i_{k+1}},t\right) + H^\alpha\left(\nu_{i_{k+1}},u|_{\Gamma_{\alpha_k}}\left(\nu_{i_{k+1}},t\right)\right) 
- f\left(\nu_{i_{k+1}},t\right)\right)\\
 +n_{i_k\alpha_{k}}  \left( -\mu_{\alpha_{k}}\partial u |_{\Gamma_{\alpha_k}}\left(\nu_{i_{k}},t\right) + H^\alpha\left(\nu_{i_{k}},u|_{\Gamma_{\alpha_k}}\left(\nu_{i_{k}},t\right)\right) 
- f\left(\nu_{i_{k}},t\right)\right)
    \end{array}
\right)         ,
\end{eqnarray*}
where we have set $i_{m+1}=i_0$. Now using (\ref{eq:25}) (which is satisfied for a.e. $t$ from the regularity of $u$) and the fact that $ \mathsf{e}\in E$, we conclude that
\begin{equation}
  \label{eq:38}
  \frac d{dt}\int_{\mathcal{L}}u\left(\zeta,t\right)d\zeta=0.
\end{equation}

Hence
\[
\int_{\mathcal{L}}u\left(\zeta,t\right){d\zeta}=\int_{\mathcal{L}}u\left(\zeta,T\right){d\zeta}
= \int_{\mathcal{L}}u_T\left(\zeta\right){d\zeta}
=\int_{\mathcal{L}}\partial v_T\left(\zeta\right){d\zeta}=0,
\]
where the last identity comes from the assumption that $v_T\in V$ (the continuity of $v_T$).

\end{proof}


\begin{lem}
\label{thm: recover v}
If $u_{T}=\partial v_{T}\in F$, then the weak solution $u$ of~\eqref{eq: regularity for H-J} satisfies
$u=\partial v$ where $v$ is the unique solution of \eqref{eq: H-J}.
\end{lem}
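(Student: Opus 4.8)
We need to show that if $u$ solves the "derived" problem \eqref{eq: regularity for H-J} with terminal data $u_T = \partial v_T$, then $u = \partial v$ where $v$ solves the original HJB problem \eqref{eq: H-J}. Let me think about the natural strategy.

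The idea: define a primitive of $u$ and show it solves \eqref{eq: H-J}, then use uniqueness.

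**The plan.** Using Lemma \ref{cor: well-defined of u}, the path-integral of $u$ is path-independent, so I can define a function $w(x,t) := v_T$-based primitive, specifically fix a base point and set $w(x,t) = c(t) + \int_{\overrightarrow{x_0 x}} u(\zeta,t)d\zeta$ for appropriate $c(t)$. This $w$ is continuous on $\Gamma$ (path-independence gives well-definedness, including across vertices — that's exactly why continuity holds, giving $w \in V$). By construction $\partial w = u$ on each edge. The constant/time-dependence $c(t)$ must be chosen so that $w$ actually satisfies the HJB equation, not just has the right spatial derivative.

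**Key steps.** First, establish $w(\cdot,t) \in V = H^1(\Gamma)$ with $\partial w = u$; continuity at vertices follows from path-independence (Lemma \ref{cor: well-defined of u}). Second, integrate the PDE for $u$: since $-\partial_t u - \mu_\alpha \partial^2 u + \partial(H(x,u)) = \partial f$ on each edge, I integrate in space from the base point, obtaining $-\partial_t w - \mu_\alpha \partial w' + H(x,u) = f + (\text{function of } t)$ on each edge; I fix $c(t)$ to absorb this time-function so that $-\partial_t w - \mu_\alpha \partial^2 w + H(x,\partial w) = f$ holds. Third, verify the Kirchhoff condition: the relation $\sum_\alpha \gamma_{i\alpha}\mu_\alpha n_{i\alpha} u|_{\Gamma_\alpha}(\nu_i)=0$ (second line of \eqref{eq: regularity for H-J}) is precisely $\sum_\alpha \gamma_{i\alpha}\mu_\alpha \partial_\alpha w(\nu_i) = 0$ since $\partial_\alpha w(\nu_i) = n_{i\alpha}\partial w|_{\Gamma_\alpha}(\nu_i) = n_{i\alpha} u|_{\Gamma_\alpha}(\nu_i)$ by \eqref{eq:2}; this is the Kirchhoff condition in \eqref{eq: H-J}. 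Fourth, check the terminal condition $w(\cdot,T) = v_T$: since $\partial w(\cdot,T) = u_T = \partial v_T$ and both are continuous primitives, they differ by a constant, which is fixed to zero by choosing the base-point value $w(x_0,T) = v_T(x_0)$ appropriately. Finally, regularity: $w \in L^2(0,T;H^2(\Gamma))$ follows since $\partial w = u \in L^2(0,T;H^2_b(\Gamma))$ from Theorem \ref{regularite-sys-derive}, and $\partial_t w \in L^2(\Gamma\times(0,T))$ from the integrated equation. Thus $w$ is a weak solution of \eqref{eq: H-J}, and by uniqueness (Theorem \ref{thm: existence and uniqueness of HJ equation}) $w = v$, giving $\partial v = u$.

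**Main obstacle.** The delicate point is the bookkeeping of the time-dependent integration "constants." When I integrate the edge-PDE for $u$ in space, I get a relation valid on each edge up to a constant that may depend on $t$ \emph{and} a priori on the edge; I must check that the second transmission condition \eqref{eq:25}-\eqref{eq:26} for $u$ forces these edge-wise constants to be compatible across vertices, so that a single global $c(t)$ works and $w$ genuinely satisfies one consistent HJB equation. This compatibility is exactly what the Robin-type transmission condition encodes — it is the integrated-by-parts shadow of requiring $-\mu_\alpha\partial^2 w + H - f$ to match across vertices — so verifying that the weak formulation \eqref{eq:28} delivers this compatibility (rather than just formally) is the crux. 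I would handle it by testing the weak formulation for $u$ against suitable elements of $E$ built from the indicator-type functions used in Lemma \ref{cor: well-defined of u}, thereby transferring the transmission condition into the statement that $\partial_t w$ and hence $w$ are globally well-defined and solve \eqref{eq: H-J}.
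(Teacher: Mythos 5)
Your proposal is correct and follows the same overall strategy as the paper: use the path-independence from Lemma~\ref{cor: well-defined of u} to build a spatial primitive of $u$, check that it satisfies all four conditions of \eqref{eq: H-J} (continuity, Kirchhoff, terminal data, and the PDE on each edge), and conclude by the uniqueness result of Theorem~\ref{thm: existence and uniqueness of HJ equation}. The one genuine difference is how the time-dependent additive ``constant'' is pinned down. The paper anchors the primitive at the value of the actual solution, setting $\hat v(x,t)=v(\nu_k,t)+\int_{\overrightarrow{\nu_k x}}u(\zeta,t)\,d\zeta$ for a boundary vertex $\nu_k$; this forces it to invoke a local regularity result for $v$ near $\nu_k$ so that the HJB equation holds pointwise at $\nu_k$ (equation \eqref{eq:39}), which is then used to cancel the base-point terms in the telescoped identity. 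You instead leave a free function $c(t)$ and determine it by the ODE $c'(t)=g(t)$, where $g$ is the common (edge-independent, by the Robin transmission condition \eqref{eq:25}) value of $-\partial_t\!\int u-\mu_\alpha\partial u+H(\cdot,u)-f$ along the path, with $c(T)=v_T(x_0)$ fixing the terminal condition. Your route avoids the boundary regularity input \eqref{eq:39} entirely, at the price of having to check that $g\in L^1(0,T)$ (which does hold, since $u\in L^2(0,T;H^2_b(\Gamma))\cap W^{1,2}(0,T;L^2(\Gamma))$ makes the traces $\partial u(x_0,\cdot)$ and $u(x_0,\cdot)$ square-integrable in time) and that the edge-wise integration constants are compatible across vertices --- which, as you correctly identify, is exactly the content of the Robin-type condition \eqref{eq:25}, valid pointwise for a.e.\ $t$ by Remark~\ref{transmission-classical-sense}. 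Both arguments are sound; yours is marginally more self-contained, the paper's avoids introducing and solving for $c(t)$.
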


\begin{proof}[Proof of Lemma~\ref{thm: recover v}]
For simplicity, we write the proof in the case when $\partial \Gamma\not= \emptyset$. The proof is similar in the other case.

Let us fix some vertex $\nu_k\in \partial \Gamma$.
From standard  regularity results for Hamilton-Jacobi equation with homogeneous Neumann condition, we know that 
 that there exists $\omega$, a closed neighborhood of $\{\nu_k\}$ in $\Gamma$ made of a single straigt line segment and 
 containing no other vertices of $\Gamma$ than $\nu_k$, 
 such that $v|_{\omega \times (0,T)}\in L^2(0,T;H^3(\omega))\cap C([0,T]; H^2(\omega)\cap W^{1,2}(0,T;H^1(\omega))$.
Hence, $v$ satisfies the Hamilton-Jacobi equation at almost every point of $\omega\times(0,T)$. Moreover the equation 
\begin{equation}
  \label{eq:39}
\partial_t v (\nu_k, t)+\mu \partial ^2 v (\nu_k, t) -H(\nu_k, 0)+f(\nu_k, t)=0
\end{equation}
 holds for almost every $t\in (0,T)$ and in $L^2(0,T)$.

For every $x\in\Gamma$ and $t\in [0,T]$, we define
\begin{equation}
 \hat{v}\left(x,t\right)=v\left(\nu_k,t\right)
+\int_{\overrightarrow{\nu_k x}}u\left(\zeta,t\right)d\zeta.\label{eq: v_hat}
\end{equation}

 \begin{rem}
   \label{sec:regul-hamilt-jacobi-2}
If $\partial \Gamma=\emptyset$, then the proof should be modified by replacing $\nu_k$ by a point $\nu\in \Gamma \setminus \cV$ and 
by using local regularity results for  the HJB equation in (\ref{eq: H-J}). 
 \end{rem}

We claim that $\hat{v}$ is a solution of \eqref{eq: H-J}.

First, $\hat{v}\left(\cdot,t\right)$ is continuous on $\Gamma$.
Indeed,  $\hat{v}(y,t)-\hat{v}(x,t)= \int_{\overrightarrow{xy}} u(\zeta,t)d\zeta$.
On the other hand,  $u\in C([0,T]; F)
\subset L^\infty(\Gamma\times [0,T])$.
It follows that $|\hat{v}(y,t)-\hat{v}(x,t)|\leq ||u||_{L^\infty(\Gamma\times [0,T])} {\rm dist}(x,y)$
which implies  that  $\hat{v}\left(\cdot,t\right)$ is continuous on $\Gamma$.

Next, from the terminal conditions for $u$,
\[
\hat{v}\left(x,T\right)=v\left(\nu_k,T\right)+\int_{\overrightarrow{\nu_k x}}u\left(\zeta,T\right)d\zeta=v_T\left(\nu_k\right)+\int_{\overrightarrow{\nu_k x}}\partial v_{T}\left(\zeta\right)d\zeta=v_{T}\left(x\right),
\]
where the last identity follows from the continuity of $v_{T}$ on $\Gamma$.

Let us check the Kirchhoff condition for $\hat{v}$. Take $\nu_{i}\in\mathcal{V}$ and $\alpha\in \mathcal{A}_i$.
From~\eqref{eq:2}, for a.e. $t\in (0,T)$,
$\partial_\alpha \hat{v} (\nu_i,t)= n_{i\alpha} \partial \hat{v}|_{\Gamma_{\alpha}}(\nu_i,t)$ and from (\ref{eq: v_hat}),
$\partial \hat{v}|_{\Gamma_{\alpha}}(\nu_i,t)=  u|_{\Gamma_{\alpha}}(\nu_i,t)$.
Since $u(\cdot, t)\in F$, we get
\begin{eqnarray*}
  \sum_{\alpha\in \mathcal{A}_i} \gamma_{i\alpha}\mu_\alpha \partial_\alpha \hat{v} (\nu_i,t)
=\sum_{\alpha\in \mathcal{A}_i} \gamma_{i\alpha}\mu_\alpha  n_{i\alpha} u|_{\Gamma_\alpha} (\nu_i,t)  =0,
\end{eqnarray*}
which is exactly the  Kirchhoff condition for $\hat{v}$ at $\nu_i$.

There remains to prove $\hat v$ solves the Hamilton-Jacobi equation in $\Gamma\setminus \cV$:
Take  $x\in\Gamma_{\alpha}\backslash\mathcal{V}$ for some $\alpha\in \cA$ and consider a path
$\overrightarrow{\nu_k x} \owns \mathcal{L}=\nu_{i_0} \rightarrow\cdots \rightarrow \nu_{i_m} \rightarrow x$,
where $i_0=k$ and $\nu_{i_m}\in \Gamma_{\alpha}$.  Let $\nu_{i_{m+1}}$ be the other endpoint of $\Gamma_{\alpha}$.
We proceed as in the proof of Lemma~\ref{cor: well-defined of u}:
the following conditions 
\begin{enumerate}
\item $ \mathsf{e}|_{\Gamma_\alpha}=0$  on each edge $\Gamma_\alpha$ not contained in $\cL$
\item for all $j=0,\dots m$,  $ \mathsf{e}|_{\Gamma_j}= 1_{i_j<i_{j+1} }  - 1_{i_j>i_{j+1} }$ if
 $\Gamma_j$ is the edge joining $\nu_{i_j}$ and  $\nu_{i_{j+1}}$
\end{enumerate}
define a unique piecewise constant function $ \mathsf{e}$ which takes at most two values  on $\cL$, namely $\pm 1$.
Note that $\mathsf{e}$ does not belong to $E$ because  $\mathsf{e}(\nu_k)\not =0$, but that $\mathsf{e}$ satisfies $
\sum_{\alpha\in\mathcal{A}_{i}}  n_{i\alpha}\mathsf{e}|{}_{\Gamma_{\alpha}}\left(\nu_{i}\right)=0\text{ for all }\nu_{i}\in\mathcal{V}\setminus\partial  \Gamma$.

Using this function, a similar computation as in the proof of Lemma~\ref{cor: well-defined of u} implies that, for almost every $t\in (0,T)$,
\begin{displaymath}
  \begin{split}
  \partial_t \hat v (x,t)-\partial_t v (\nu_k,t)
=&
  -\mu_{\alpha}\partial u|_{\Gamma_{\alpha}}\left(x,t\right) + H\left(x,u|_{\Gamma_{\alpha}}\left(x,t\right)\right) - f\left(x,t\right)\\
&+  \mu_{\alpha}\partial_2 v\left(\nu_k,t\right) - H\left(\nu_k,0\right) + f\left(\nu_k,t\right).
  \end{split}
\end{displaymath}
Then, using (\ref{eq:39}) and the fact that $\partial \hat v = u$, the latter identity yields that for almost every $(x,t)\in (0,T)\times\Gamma$,
\begin{displaymath}
  \partial_t \hat v (x,t)
+  \mu_{\alpha}  \partial^2\hat v  \left(x,t\right) - H\left(x,\partial \hat v\left(x,t\right)\right) 
+ f\left(x,t\right)
=0.
\end{displaymath}
We have proven that $\hat v$ is a solution of \eqref{eq: H-J}. Since $v$ is the unique solution of
\eqref{eq: H-J}, we conclude that $v=\hat{v}$ and $\partial v=u$. 
\end{proof}

We are now ready to give the proof of  Theorem~\ref{thm: H-J regularity}.

\begin{proof}[Proof of Theorem~\ref{thm: H-J regularity}]
Since $\partial v =u$ by Lemma~\ref{thm: recover v} and $u$
satisfies~\eqref{energ-est} by Theorem~\ref{regularite-sys-derive},
we obtain that 
$v\in L^{2}\left(0,T;H^{3}\left(\Gamma\right)\right)$ and
$\partial_{t}v\in L^{2}\left(0,T;H^{1}\left(\Gamma\right)\right)$
and~\eqref{energy-est-v} holds.

Therefore, using an interpolation result combined with Sobolev embeddings, see \cite{Amann2000} or Lemma~\ref{amann-result} in the Appendix,
$v\in C^{1+\sigma, \sigma/2}(\Gamma\times [0,T])$ for some $0<\sigma< 1$.

Finally, we know that since $f\in W^{1,2}(0,T,H^1_b(\Gamma) )$, $f|_{\Gamma_\alpha\times [0,T]}\in C^{\eta, \eta}(\Gamma_\alpha\times [0,T])$ for all $\eta\in (0,1/2)$. If  $f \in C^{\eta,\frac{\eta}{2}}(\Gamma_\alpha\times [0,T])$ for some $\eta\in (0, 1/2)$, we claim that $v\in C^{2,1}(\Gamma\times [0,T])$.
 This is a direct consequence of a theorem of  Von Below, see the main theorem in \cite{Below1988},
for the (modified) heat equation 
\begin{equation}
-\partial_t w -\mu_{\alpha}\partial^2 w = g(x,t) \quad \text {in } (\Gamma_{\alpha}\backslash\mathcal{V})\times (0,T),
\end{equation}
with the same Kirchhoff conditions  as in (\ref{eq: H-J}):
Note that if the terminal Cauchy condition for $w$ is $w(\cdot,t=T)=v_T$ and if  $g=f-H(x,\partial v)$,
then $w=v$.  Now $g=f-H(x,\partial v)\in C^{\tau,\frac{\tau}{2}}(\Gamma_\alpha\times [0,T])$, where $1/2>\tau=\min (\sigma, \eta)>0$. Using the result in \cite{Below1988}, we obtain that $v=w\in C^{2+\tau, 1+\tau/2} (\Gamma_\alpha\times [0,T])$, then that $v$  is a classical solution of (\ref{eq: H-J}).
\end{proof}

\section{Existence, uniqueness and regularity for the MFG system
(Proof of Theorem~\ref{thm: MFG system})}
\label{sec:exist-uniq-regul}
\begin{proof}
[Proof of existence in Theorem \ref{thm: MFG system}]
Given $m_0$ and $v_T$, let us construct the map $T:L^{2}\left(0,T;V\right)\rightarrow L^{2}\left(0,T;V\right)$
as follows. 

Given $v\in L^{2}\left(0,T;V\right)$,  we first define $m$ as the weak solution of (\ref{eq: Fokker-Planck})  with initial data $m_0$ and $b= H_p(x, \partial v)$. We know that $ m\in L^{2}\left(0,T;W\right)\cap C([0,T]; L^2(\Gamma))\cap W^{1,2}(0,T; V')$. 

We claim that if $v_n\to v$ in $L^{2}\left(0,T;V\right)$ then 
 $H_p(\cdot, \partial v_n)$ tends to $H_p(\cdot, \partial v)$ in $L^2 (\Gamma\times (0,T))$. To prove the claim, we argue by
 contradiction: assume that
there exist a positive number $\epsilon$ and a subsequence $v_{\phi(n)}$ such that 
$\|H_p(\cdot, \partial v_{\phi(n)})-H_p(\cdot, \partial v)\|_{L^2 (\Gamma\times (0,T))}>\epsilon$. Then since $\partial v_{\phi(n)}$ tends to $\partial v$ in  $L^2 (\Gamma\times (0,T))$, we can extract another subsequence $v_{\psi(n)}$ from $v_{\phi(n)}$ such that $\partial v_{\psi(n)}$ tends to $\partial v$ almost every where in $\Gamma\times (0,T)$. 
From the continuity of $H_p$, we deduce that $H_p(\cdot, \partial v_{\psi(n)})$ tends to $H_p(\cdot, \partial v)$  almost everywhere 
in  $\Gamma\times (0,T)$. 
Since there exists  a positive constant $C_0$ such that $\|H_p(\cdot, \partial v_{\psi(n)})\|_{\infty}\le C_0$,  $\|H_p(\cdot, \partial v)\|_{\infty}\le C_0$, Lebesgue dominated convergence theorem ensures that 
$H_p(\cdot, \partial v_{\psi(n)})$ tends to $H_p(\cdot, \partial v)$ in $L^2(\Gamma\times (0,T))$, which is the desired contradiction.

To summarize,
 $H_p(\cdot, \partial v_n)$ tends to $H_p(\cdot, \partial v)$ in $L^2 (\Gamma\times (0,T))$ on the one hand, and for a positive constant $C_0$, $\|H_p(\cdot, \partial v_n)\|_{\infty}\le C_0$,  $\|H_p(\cdot, \partial v)\|_{\infty}\le C_0$. Using Lemma~\ref{lem: stability FK}, we see that $m_n$,   the weak solution of (\ref{eq: Fokker-Planck})  with initial data $m_0$ and $b= H_p(x, \partial v_n)$
converges to $m$ in $L^{2}\left(0,T;W\right)\cap L^\infty\left(0,T;L^2(\Gamma)\right)\cap W^{1,2}(0,T; V')$.
Hence,
the map $v\mapsto m$ is continuous from $L^{2}\left(0,T;V\right)$ to 
$L^{2}\left(0,T;W\right)\cap L^\infty\left(0,T;L^2(\Gamma)\right)\cap W^{1,2}(0,T; V')$. Moreover, the  a priori estimate
(\ref{eq:22}) holds uniformly with respect to $v$.

Then, knowing $m$, we construct $T(v)\equiv \widetilde v$ as the unique weak solution of (\ref{eq: H-J}) with $f(x,t)= \ccV[m(\cdot,t)](x)$.
Note  that $m\mapsto f$ is continuous and locally bounded from $L^2(\Gamma \times (0,T))$ to
$ L^2(\Gamma\times (0,T))$. Then Lemma \ref{lem: stability HJ} ensures that 
the map $m\to \tilde v$ is continuous from $L^2(\Gamma \times (0,T))$ to
 $L^{2}\left(0,T;H^2(\Gamma)\right)\cap L^\infty\left(0,T;V\right)\cap W^{1,2}(0,T; L^2(\Gamma))$. 
From Aubin-Lions theorem, see Lemma~\ref{aubin-lions-lem},  $m\to \tilde v$ maps bounded sets of  $L^2(\Gamma \times (0,T))$ to relatively compact sets of $L^{2}\left(0,T;V\right)$.

Therefore, the map $T: v\mapsto \tilde v$ is continuous from $L^{2}\left(0,T;V\right)$ to $L^{2}\left(0,T;V\right)$
and has a relatively compact image. Finally, we apply Schauder fixed point theorem~\cite[Corollary 11.2]{GT2001}
and conclude that the map $T$ admits a fixed point $v$. We know that 
$ v\in L^{2}\left(0,T;H^2(\Gamma)\right)\cap L^\infty\left(0,T;V\right)\cap W^{1,2}(0,T; L^2(\Gamma))$
and $m\in L^{2}\left(0,T;W\right)\cap L^\infty\left(0,T;L^2(\Gamma)\right)\cap W^{1,2}(0,T; V'(\Gamma))$.

Hence, there exists a weak solution $\left(v,m\right)$ to the mean
field games system \eqref{eq: MFG system}.
\end{proof}

\begin{proof}
[Proof of uniqueness in Theorem \ref{thm: MFG system}]

We assume that there exist two solutions $\left(v_{1},m_{1}\right)$
and $\left(v_{2},m_{2}\right)$ of \eqref{eq: MFG system}. We set $\overline{v}=v_{1}-v_{2}$
and $\overline{m}=m_{1}-m_{2}$ and write the system for $\overline{v},\overline{m}$
\[
\begin{cases}
 - \partial_{t}\overline{v} - \mu_{\alpha}\partial^{2}\overline{v} 
 + H\left(x,\partial v_{1}\right) - H\left(x,\partial v_{2}\right)
 - \left(\ccV\left[m_{1}\right]-\ccV\left[m_{2}\right]\right)=0, 
 & x \in \Gamma_{\alpha} \backslash \mathcal{V},~t\in\left(0,T\right),\\
   \partial_{t}\overline{m} - \mu_{\alpha}\partial^{2}\overline{m}
 - \partial\left(m_{1}\partial_{p}H\left(x,\partial m_{1}\right) - m_{2}\partial_{p}H\left(x,\partial m_{2}\right)\right)=0 
 & x \in \Gamma_{\alpha} \backslash \mathcal{V},~t\in\left(0,T\right),\\
   \overline{v}|_{\Gamma_{\alpha}} \left(\nu_{i},t\right) = \overline{v}|_{\Gamma_{\beta}}\left(\nu_{i},t\right), \
   \dfrac{\overline{m}|_{\Gamma_{\alpha}}\left(\nu_{i},t\right)}{\gamma_{i\alpha}} = \dfrac{\overline{m}|_{\Gamma_{\beta}}\left(\nu_{i},t\right)}{\gamma_{i\beta}}, 
 & \alpha,\beta\in\mathcal{A}_{i},~\nu_i \in \mathcal{V},\\
   \ds{\sum_{\alpha\in\mathcal{A}_{i}}\gamma_{i\alpha}\mu_{\alpha}\partial_{\alpha}\overline{v}\left(\nu_{i},t\right)=0,} & \nu_i \in \mathcal{V},~t\in\left(0,T\right),\\
   \ds{\sum_{\alpha\in\mathcal{A}_{i}} n_{i\alpha}\left[m_{1}|{}_{\Gamma_{\alpha}}\left(\nu_{i}\right)\partial_{p}H^\alpha\left(\nu_{i},\partial v_{1}|_{\Gamma_{\alpha}}\left(\nu_{i},t\right)\right)-m_{2}|{}_{\Gamma_{\alpha}}\left(\nu_{i}\right)\partial_{p}H^\alpha\left(\nu_{i},\partial v_{2}|_{\Gamma_{\alpha}}\left(\nu_{i},t\right)\right)\right]}\\
 \hspace*{0.4cm}  \ds{+\sum_{\alpha\in\mathcal{A}_{i}}\mu_{\alpha}\partial_{\alpha}\overline{m}\left(\nu_{i},t\right)=0,} & \nu_i\in \mathcal{V},~t\in\left(0,T\right),\\
\overline{v}\left(x,T\right)=0,\ \overline{m}\left(x,0\right)=0.
\end{cases}
\]
Testing by $\overline{m}$ the boundary value problem satified by  $\overline{u}$,
 testing by $\overline{u}$ the boundary value problem satified by  $\overline{m}$,
subtracting,
we obtain
\begin{align*}
 &   \int_{0}^{T}\int_{\Gamma}\left(m_{1} - m_{2}\right)\left(\ccV\left[m_{1}\right] - \ccV\left[m_{2}\right)\right]dxdt
   + \int_{0}^{T}\int_{\Gamma}\partial_{t}\left(\overline{m}~\overline{v}\right)dxdt\\
 & + \sum_{\alpha\in\mathcal{A}}\int_{\Gamma_{\alpha}}m_{1}\left[H\left(x,\partial v_{2}\right) - H\left(x,\partial v_{1}\right)
   - \partial_{p}H\left(x,\partial v_{1}\right)\partial\overline{v}\right]dx\\
 & + \sum_{\alpha\in\mathcal{A}}\int_{\Gamma_{\alpha}}m_{2}\left[H\left(x,\partial v_{1}\right) - H\left(x,\partial v_{2}\right)
   + \partial_{p}H\left(x,\partial v_{1}\right)\partial\overline{v}\right]dx =0.
\end{align*}
Since $V$ is strictly monotone, the first sum is nonnegative.
Moreover,
\[
    \int_{0}^{T}\int_{\Gamma}\partial_{t}\left(\overline{m}~\overline{v}\right)dxdt=\int_{\Gamma} [\overline{m}(x,T)\ \overline{v}(x,T) 
  - \overline{m}(x,0)\ \overline{v}(x,0)] dx
 =  0 ,
 \]
 since $\overline{v}(x,T)=0$ and  $\overline{m}(x,0)=0$.
From the convexity of $H$ and the fact that $m_{1},m_{2}$ are nonnegative,
the last two sums are nonnegative. Therefore, all the terms are zero and thanks
again to the fact that $\ccV$ is strictly increasing, we obtain $m_{1}=m_{2}$.
From Lemma \ref{thm: existence and uniqueness of HJ equation}, we
finally obtain $v_{1}=v_{2}$.
\end{proof}
\begin{proof}[Proof of regularity in Theorem~\ref{thm: MFG system}]
We make the  stronger assumptions written in Section ~\ref{sec:strong-assumpt-coupl} on the coupling operator $\ccV$. We know that $ \ccV[m] \in W^{1,2}(0,T, H^1_b(\Gamma))\cap PC(
\Gamma\times [0,T])$. Assuming also that $v_T\in V$ and $\partial v_T \in F$, we can apply the regularity result in Theorem 
\ref{thm: H-J regularity}: $v\in L^{2}\left(0,T;H^{3}\left(\Gamma\right)\right)\cap  W^{1,2}\left(0,T;H^{1}\left(\Gamma\right)\right)$.

 Moreover, since $ \ccV[m] \in W^{1,2}(0,T, H^1_b(\Gamma))$, we know that 
 $(\ccV[m])|_{\Gamma_ \alpha \times [0,T]} \in C^{\sigma, \sigma/2}(\Gamma_\alpha\times [0,T])$ for all $0<\sigma<1/2$.
If $v_T\in C^{2+\eta}\cap  D$ for some $\eta\in (0,1)$ ($D$ is defined in (\ref{Kirchhoff condition})), then from Theorem 
\ref{thm: H-J regularity}, $v\in C^{2+\tau, 1+\tau/2}(\Gamma\times [0,T])$ for some $\tau\in (0,1)$ and the boundary value problem for $v$ is satisfied in a classical sense.

In turn,  if  for all $\alpha \in \cA$, $\partial p H^\alpha (x, p)$ is a Lipschitz
 function defined in $\Gamma_\alpha \times \R$, and if $m_0\in W$,
 then  we can use  the latter regularity of $v$ and arguments similar to those contained in the proof of Theorem~\ref{thm: regularity for v} and prove that $m\in C([0,T];W)\cap W^{1,2}(0,T; L^2(\Gamma))\cap L^2 (0,T; H^2_b(\Gamma))$.
\end{proof}

\appendix

\section{Some continuous and compact  embeddings}
\label{app:embeddings}

\begin{lem} \label{aubin-lions-lem}(Aubin-Lions Lemma, see \cite{Lions1969})
Let $X_{0}$,$ X$ and $X_{1}$ be function spaces,  ($X_0$ and $X_1$ are reflexive).
Suppose that $X_{0}$ is compactly embedded in $X$ and that $X$ is continuously embedded
in $X_{1}$. Consider some real numbers $1< p,q<+\infty$. Then the following set
\[
\left\{ v:(0,T)\mapsto X_0: \; v\in L^{p}\left(0,T;X_{0}\right),\; \partial_{t}v\in L^{q}\left(0,T;X_{1}\right)\right\}
\]
is compactly embedded
in $L^{p}\left(0,T;X\right)$.
\end{lem}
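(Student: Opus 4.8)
The plan is to reduce the statement to a Fr\'echet--Kolmogorov type compactness criterion in the Bochner space $L^{p}(0,T;X)$, the two ingredients being an interpolation inequality of Ehrling type and a uniform control of time translations coming from the bound on $\partial_t v$ in $L^{q}(0,T;X_1)$.

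First I would establish the following Ehrling--Lions inequality: for every $\eta>0$ there exists a constant $C_\eta>0$ such that
\[
\|v\|_{X}\le \eta\,\|v\|_{X_0}+C_\eta\,\|v\|_{X_1}\qquad\text{for all }v\in X_0 .
\]
This is proved by contradiction. If it failed, there would be $\eta_0>0$ and a sequence $(v_n)\subset X_0$, normalized by $\|v_n\|_{X_0}=1$, with $\|v_n\|_{X}>\eta_0+n\|v_n\|_{X_1}$. Since the embedding $X_0\hookrightarrow X$ is continuous, $\|v_n\|_{X}$ stays bounded, which forces $\|v_n\|_{X_1}\to 0$; by the \emph{compactness} of $X_0\hookrightarrow X$ I may extract $v_n\to v$ in $X$, hence (by the continuous embedding $X\hookrightarrow X_1$) also in $X_1$, so that $\|v\|_{X_1}=0$ and $v=0$, contradicting $\|v\|_{X}\ge\eta_0>0$. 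Only the compact embedding $X_0\hookrightarrow X$ and the continuous embedding $X\hookrightarrow X_1$ enter here.

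Next, given a sequence $(v_n)$ bounded in $L^{p}(0,T;X_0)$ with $(\partial_t v_n)$ bounded in $L^{q}(0,T;X_1)$, I would verify the two hypotheses of the compactness criterion for subsets of $L^{p}(0,T;X)$ (Simon, or the Riesz--Fr\'echet--Kolmogorov theorem adapted to Bochner spaces, as in \cite{Lions1969}). The first hypothesis, relative compactness in $X$ of the local time-averages $\int_{t_1}^{t_2}v_n\,dt$, is immediate: by Jensen's and H\"older's inequalities these averages are bounded in $X_0$, hence relatively compact in $X$ by the compact embedding. The crucial second hypothesis is the uniform smallness of time translations,
\[
\sup_n \int_0^{T-h}\|v_n(t+h)-v_n(t)\|_{X}^{p}\,dt\longrightarrow 0\qquad(h\to 0^{+}).
\]
To obtain it I would write $v_n(t+h)-v_n(t)=\int_t^{t+h}\partial_t v_n(\tau)\,d\tau$, so that H\"older's inequality (in the inner integral and in $t$) yields $\int_0^{T-h}\|v_n(t+h)-v_n(t)\|_{X_1}^{p}\,dt\le C\,h^{\beta}$ for some $\beta>0$, uniformly in $n$; thus translations are small in $X_1$. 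Applying the Ehrling inequality to $v_n(t+h)-v_n(t)$, the $X_0$-part is merely bounded (by twice the $L^{p}(0,T;X_0)$-bound) and is absorbed into $\eta$, while the $X_1$-part is multiplied by $C_\eta$ and tends to $0$; letting first $h\to 0$ and then $\eta\to 0$ gives the required smallness in the $X$-norm.

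Finally, the criterion yields that $(v_n)$ is relatively compact in $L^{p}(0,T;X)$. To upgrade this to the \emph{compactness of the embedding}, I would use the reflexivity of $X_0$ and $X_1$ to extract, from any bounded sequence, weak limits of $(v_n)$ in $L^{p}(0,T;X_0)$ and of $(\partial_t v_n)$ in $L^{q}(0,T;X_1)$; any strong $L^{p}(0,T;X)$ limit of a subsequence must coincide with the weak $L^{p}(0,T;X_0)$ limit, so the limit is uniquely identified and the embedding is compact. I expect the main obstacle to be precisely the translation estimate: the derivative bound only lives in the weakest space $X_1$, and transferring this information to the intermediate space $X$ is exactly where the Ehrling inequality---and hence the compactness of $X_0\hookrightarrow X$---must be invoked, absorbing the uncontrolled $X_0$-part into an arbitrarily small parameter. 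Invoking the precise Bochner-space compactness criterion, rather than reproving it, is the other point where I would rely on \cite{Lions1969}.
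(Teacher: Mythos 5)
The paper does not prove this lemma at all: it is quoted verbatim from the literature (Lions \cite{Lions1969}), so there is no in-paper argument to compare against. Your proposal is a correct rendition of the standard proof (Ehrling's interpolation inequality by contradiction, plus the Bochner-space Riesz--Fr\'echet--Kolmogorov/Simon criterion with the time-translation estimate transferred from $X_1$ to $X$), and the only superfluous part is the final paragraph: relative compactness in $L^{p}(0,T;X)$ of bounded subsets of the space in question \emph{is} the compactness of the embedding, so the identification of the limit via reflexivity is not needed for the conclusion.
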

\begin{lem} \label{amann-result}(Amann, see \cite{Amann2000})
Let $\phi :[a,b]\times [0,T]\to \R$ such that
$\phi \in L^2(0,T;H^2(a,b))$ and $\partial_t\phi \in L^2(0,T;L^2(a,b))$.
Then $\phi\in C^s (0,T;H^1(a,b))$ for some $s\in (0,1/2)$.
\end{lem}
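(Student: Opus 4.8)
The plan is to read Lemma~\ref{amann-result} as a trace/interpolation statement for the anisotropic parabolic space $\mathcal{X}:=\{\phi:\ \phi\in L^2(0,T;H^2(a,b)),\ \partial_t\phi\in L^2(0,T;L^2(a,b))\}$ and to identify the optimal exponent of time-Hölder continuity with values in $H^1$. First I would record the two elementary endpoint facts. Since $\partial_t\phi\in L^2(0,T;L^2(a,b))$, the fundamental theorem of calculus together with the Cauchy--Schwarz inequality gives the $L^2$-valued Hölder-$1/2$ bound $\|\phi(t)-\phi(s)\|_{L^2(a,b)}\le |t-s|^{1/2}\,\|\partial_t\phi\|_{L^2(0,T;L^2(a,b))}$ for a.e. $s,t$. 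On the other hand, because $\phi\in L^2(0,T;H^2(a,b))$ and $\partial_t\phi\in L^2(0,T;L^2(a,b))$, the Lions--Magenes trace theorem (\cite[Theorem 3.1]{lm68}, already used above) yields $\phi\in C([0,T];(H^2,L^2)_{1/2,2})=C([0,T];H^1(a,b))$, so that $\phi(t)$ is a well defined element of $H^1(a,b)$ for every $t$.

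The heart of the matter is to upgrade this continuity to Hölder continuity. I would rely on the real interpolation identity $(H^2(a,b),L^2(a,b))_{\theta,2}=H^{2(1-\theta)}(a,b)$ and on the embedding $\mathcal{X}\hookrightarrow C^{\,1/2-\theta}([0,T];(H^2,L^2)_{\theta,2})$, valid for every $\theta\in(0,1/2)$. Since $2(1-\theta)\ge 1$ whenever $\theta\le 1/2$, one has $H^{2(1-\theta)}(a,b)\hookrightarrow H^1(a,b)$; choosing any $\theta\in(0,1/2)$ and setting $s:=1/2-\theta\in(0,1/2)$ then gives precisely $\phi\in C^{s}([0,T];H^1(a,b))$, which is the claim. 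This is exactly the content of Amann's embedding theorems, so in the paper one may simply invoke \cite{Amann2000}; the remaining paragraphs indicate the mechanism.

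To prove the Hölder embedding by hand I would combine the interpolation inequality $\|w\|_{H^1}\le C\,\|w\|_{L^2}^{1/2}\|w\|_{H^2}^{1/2}$ (and more generally $\|w\|_{(H^2,L^2)_{\theta,2}}\le C\,\|w\|_{H^2}^{1-\theta}\|w\|_{L^2}^{\theta}$) with a dyadic averaging of $\phi$ in time. Writing $\phi(t)-\phi(s)$ as a difference of local time-averages over an interval of length $h=|t-s|$ (controlled in $L^2$ by $h^{1/2}$ and in $H^2$ by $h^{-1/2}\|\phi\|_{L^2(0,T;H^2)}$, hence in $H^1$ by a positive power of $h$ via the interpolation inequality), plus two telescoping series of consecutive dyadic averages near $s$ and near $t$, I would estimate each term of the telescoping series by the same interpolation inequality; the series converge geometrically and produce the Hölder modulus $h^{1/2-\theta}$. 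This is the real-interpolation $K$-functional argument made explicit.

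The main obstacle is the borderline nature of the endpoint $\theta=1/2$: the trace embedding there delivers only continuity in time (with values in $H^1$) and no Hölder modulus, and $\phi$ carries no pointwise-in-time $H^2$ bound, since it lies only in $L^2(0,T;H^2)$. One therefore cannot simply insert pointwise values into the interpolation inequality, and the gain of a genuinely positive Hölder exponent comes precisely from pushing $\theta$ strictly below $1/2$, that is, from trading a little of the available spatial $H^2$-regularity (the target $H^{2(1-\theta)}$ being more regular than $H^1$) for time-Hölder regularity, the bookkeeping being carried out by the dyadic decomposition above.
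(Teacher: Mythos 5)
Your two endpoint facts (the $L^2$-valued H\"older-$1/2$ bound from the fundamental theorem of calculus, and $\phi\in C([0,T];H^1)$ from the Lions--Magenes trace theorem) are correct, but the step that is supposed to do the work --- the embedding $\mathcal{X}\hookrightarrow C^{1/2-\theta}([0,T];(H^2,L^2)_{\theta,2})$ with $(H^2,L^2)_{\theta,2}=H^{2(1-\theta)}$ for $\theta\in(0,1/2)$ --- is false: the interpolation index sits on the wrong side. Since $2(1-\theta)>1$ when $\theta<1/2$, you are asserting H\"older continuity in time with values in a space \emph{strictly smoother} than $H^1$, whereas the trace space of $L^2(0,T;H^2)\cap H^1(0,T;L^2)$ is exactly $(L^2,H^2)_{1/2,2}=H^1$, so the pointwise values $\phi(t)$ need not lie in $H^{2(1-\theta)}$ at all. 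The correct one-parameter family --- and the one your dyadic/telescoping argument actually yields once the exponents are tracked --- is $\mathcal{X}\hookrightarrow C^{1/2-\theta}([0,T];H^{2\theta})$, $\theta\in(0,1/2)$: the target spaces are \emph{less} regular than $H^1$, and the exponents obey the parabolic constraint (H\"older exponent) $+\tfrac12$(spatial order) $\le\tfrac12$, which your claimed pairing violates. Concretely, the $j$-th term of your telescoping sum is controlled by $(2^{-j}h)^{1/2}$ in $L^2$ and by $(2^{-j}h)^{-1/2}$ in $H^2$, hence by $(2^{-j}h)^{(1-2\theta)/2}$ in $H^{2\theta}$ via $\|w\|_{H^{2\theta}}\le C\|w\|_{L^2}^{1-\theta}\|w\|_{H^2}^{\theta}$; this is geometrically summable only for $\theta<1/2$ and degenerates to a constant, non-summable bound at the $H^1$ endpoint. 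Your final paragraph correctly identifies this borderline but then moves $\theta$ in the wrong direction: in your parametrization, lowering $\theta$ below $1/2$ improves \emph{both} the spatial target and the H\"older exponent simultaneously, a free lunch no interpolation argument can provide.

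The obstruction is not an artifact of the method. Take $\phi(t)=e^{t\partial_x^2}\phi_0$ with $\phi_0=\sum_k a_k e_k\in H^1(a,b)$ chosen so that the tails $\sum_{\lambda_k\ge\Lambda}\lambda_k a_k^2$ decay only like $1/\log\Lambda$: then $\phi\in L^2(0,T;H^2)$ and $\partial_t\phi\in L^2(0,T;L^2)$, while $\|\phi(t)-\phi(0)\|_{H^1}\gtrsim(\log(1/t))^{-1/2}$, which is not $O(t^s)$ for any $s>0$. So no positive H\"older exponent with values in $H^1$ itself is attainable, and the lemma should be read (and is only needed) with $H^{2\theta}$, $\theta\in(1/4,1/2)$, in place of $H^1$: one then still has $H^{2\theta}(a,b)\hookrightarrow C^{0,2\theta-1/2}([a,b])$, which is what the proof of Theorem~\ref{thm: H-J regularity} actually uses. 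By contrast with your self-contained mechanism, the paper's own proof simply verifies the hypothesis $E_1\hookrightarrow(E_0,E_1)_{\nu,1}\hookrightarrow E$ of Amann's abstract embedding theorem \cite{Amann2000}; a corrected version of your argument would make that black box explicit, but it must land in $(L^2,H^2)_{\theta,q}$ with $\theta<1/2$, not in $(H^2,L^2)_{\theta,q}$.
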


This result is a consequence of the general result~\cite[Theorem 1.1]{Amann2000}
taking into account~\cite[Remark 7.4]{Amann2000}. More precisely, we have
\begin{eqnarray*}
E_1:=H^2(a,b)  \mathop{\hookrightarrow}^{\tiny\rm compact}  E:=H^1(a,b) \hookrightarrow E_0:=L^2(a,b).
\end{eqnarray*}
Let $r_0=r_1=r=2$, $\sigma_0=0$, $\sigma_1=2$ and $\sigma=1$. For any $\nu \in (0,1)$, we define 
\[
\dfrac{1}{r_\nu}=\dfrac{1}{r_0}+\dfrac{1-\nu}{r_1},\quad \sigma_\nu:=(1-\nu)s_0 + \nu s_1.
\]
This implies that $r_\nu=2$ and $\sigma_\nu=2\nu$.
Therefore, if $\nu\in(1/2,1)$, then the following inequality is satisfied
\[
\sigma-1/r<\sigma_\nu - 1/r_\nu<\sigma_1-1/r_1.
\]
Hence, we infer from~\cite[Remark 7.4]{Amann2000}
\[
E_1\hookrightarrow (E_0,E_1)_{\nu,1} \hookrightarrow(E_0.E_1)_{\nu,r_\nu} = W^{\sigma_\nu, r_\nu}(a,b) \hookrightarrow E,
\]
where  $(E_0,E_1)_{\nu,1}$, $(E_0.E_1)_{\nu,r_\nu}$ are interpolation spaces.
This is precisely the assumption allowing to apply~\cite[Theorem 1.1]{Amann2000},
which gives the result of Lemma~\ref{amann-result}.
\bigskip

\noindent {\bf Acknowledgment.
  This work was partially supported by the ANR (Agence Nationale de la Recherche) through
MFG project ANR-16-CE40-0015-01.
The work of O. Ley and N. Tchou is partially supported by the Centre Henri Lebesgue ANR-11-LABX-0020-01.}

{\small

}


\end{document}